\newfont{\cyr}{wncyr10} 
\theoremstyle{plain}
\newtheorem{theorem}{Theorem}[section]
\newtheorem{Thm}{Theorem}
\newtheorem{lemma}[theorem]{Lemma}
\newtheorem{proposition}[theorem]{Proposition}
\newtheorem{corollary}[theorem]{Corollary}
\newtheorem*{proposition*}{Proposition}
\theoremstyle{definition}
\newtheorem{remark}[theorem]{Remark}
\newtheorem{definition}[theorem]{Definition}
\numberwithin{equation}{section}
\newcommand{\ab}{\mathrm {ab}}
\newcommand{\cC}{{\mathcal C}}
\newcommand{\bD}{{\mathbf D}}
\newcommand{\cE}{{\mathcal E}}
\newcommand{\ff}{{\mathfrak f}}
\newcommand{\bG}{{\mathbf G}}
\newcommand{\ch}[1]{{\check{#1}}}
\newcommand{\wh}[1]{{\widehat{#1}}}
\newcommand{\cI}{{\mathcal I}}
\newcommand{\cK}{{\mathcal K}}
\newcommand{\fK}{{\mathfrak K}}
\renewcommand{\L}{{\mathcal L}}
\newcommand{\cL}{{\mathcal L}}
\newcommand{\fm}{{\mathfrak m}}
\newcommand{\ms}{{\Sigma}}  
\newcommand{\bN}{{\mathbf N}}
\newcommand{\cN}{{\mathcal N}}
\newcommand{\cO}{{\mathcal O}}
\newcommand{\ov}[1]{{\overline{#1}}}
\newcommand{\fp}{{\mathfrak p}}
\newcommand{\bQ}{{\mathbf Q}}
\newcommand{\fq}{{\mathfrak q}}
\newcommand{\rel}{\mathrm {rel}}
\newcommand{\sha}{{\mbox{\cyr Sh}}}
\newcommand{\vs}{{\varsigma}}
\newcommand{\vt}{{\vartheta}}
\newcommand{\cY}{{\mathcal Y}}
\newcommand{\bZ}{{\mathbf Z}}
\DeclareMathOperator{\Aut}{Aut}
\DeclareMathOperator{\Fr}{Fr}
\DeclareMathOperator{\Gal}{Gal}
\DeclareMathOperator{\Hom}{Hom}
\DeclareMathOperator{\Image}{Im}
\DeclareMathOperator{\Ker}{Ker}
\DeclareMathOperator{\loc}{\mathrm{loc}}
\DeclareMathOperator{\ord}{ord}
\DeclareMathOperator{\rank}{rank}
\DeclareMathOperator{\Sel}{Sel}
\DeclareMathOperator{\Tr}{Tr}
\begin{document}
\title[Birch and Swinnerton-Dyer conjecture II]{On Rubin's variant
of the $p$-adic Birch and\\ Swinnerton-Dyer conjecture II}
\author{A. Agboola}
\date{Final version. April 1, 2010}
\address{Department of Mathematics \\
University of California \\ Santa Barbara, CA 93106.}
\email{agboola@math.ucsb.edu}
\subjclass[2000]{11G05, 11R23, 11G16}
\thanks{Partially supported by NSA grant no. H98230-08-1-0077. I am very
  grateful to the anonymous referee for a number of very helpful
  comments. This paper was completed while I was visiting the Humboldt
  University, Berlin. I would like to thank the members of the
  Mathematics Department there for their generous hospitality.}
\keywords{Rubin, $p$-adic $L$-function, Birch and Swinnerton-Dyer
  conjecture, elliptic curve.}
\begin{abstract}
Let $E/\bQ$ be an elliptic curve with complex multiplication by the
ring of integers of an imaginary quadratic field $K$. In 1991, by
studying a certain special value of the Katz two-variable $p$-adic
$L$-function lying outside the range of $p$-adic interpolation,
K. Rubin formulated a $p$-adic variant of the Birch and
Swinnerton-Dyer conjecture when $E(K)$ is infinite, and he proved that
his conjecture is true for $E(K)$ of rank one.

When $E(K)$ is finite, however, the statement of Rubin's original
conjecture no longer applies, and the relevant special value of the
appropriate $p$-adic $L$-function is equal to zero. In this paper we
extend our earlier work and give an unconditional proof of an analogue
of Rubin's conjecture when $E(K)$ is finite.
\end{abstract}

\maketitle

\section{Introduction} \label{S:intro}

The goal of this article is to extend the results of \cite{A1} to give
an unconditional proof of a certain variant of the $p$-adic Birch and
Swinnerton-Dyer conjecture for elliptic curves with complex
multiplication.

Let $E/\bQ$ be an elliptic curve with complex multiplication by $O_K$,
the ring of integers of an imaginary quadratic field $K$; this implies
that $K$ is necessarily of class number one. Let $p>3$ be a prime of
good, ordinary reduction for $E$; then we may write $pO_K=\fp \fp^*$,
with $\fp = \pi O_K$ and $\fp^* = \pi^* O_K$.

Set $\cK_{\infty}:= K(E_{\pi^{\infty}})$, $\cK_{\infty}^{*}:=
K(E_{\pi^{*\infty}})$, and $\fK_{\infty}:= \cK_{\infty}
\cK_{\infty}^{*}$. Let $\cO$ denote the completion of the ring of
integers of $\cK_{\infty,\fp}^{*}$. For any extension $L/K$ we set
$\Lambda(L):= \Lambda(\Gal(L/K)):= \bZ_p[[\Gal(L/K)]]$, and
$\Lambda(L)_{\cO}:= \cO[[\Gal(L/K)]]$.

Let
\begin{align*}
&\psi: \Gal(\ov{K}/K) \to \Aut(E_{\pi^{\infty}}) \xrightarrow{\sim}
O_{K,\fp}^{\times} \xrightarrow{\sim} \bZ_{p}^{\times}, \\
&\psi^*: \Gal(\ov{K}/K) \to \Aut(E_{\pi^{*\infty}}) \xrightarrow{\sim}
O_{K,\fp^*}^{\times} \xrightarrow{\sim} \bZ_{p}^{\times}
\end{align*}
denote the natural $\bZ_{p}^{\times}$-valued characters of
$\Gal(\ov{K}/K)$ arising via Galois action on $E_{\pi^\infty}$ and
$E_{\pi^{*\infty}}$ respectively. We may identify $\psi$ with the
Grossecharacter associated to $E$ (and $\psi^*$ with the complex
conjugate $\ov{\psi}$ of this Grossencharacter), as described, for
example, in \cite[p. 325]{R}. We write $T$ and $T^*$ for the
$\fp$-adic and $\fp^*$-adic Tate modules of $E$ respectively.

We now recall that the Katz two-variable $p$-adic $L$-function
$\cL_{\fp} \in \Lambda(\fK_{\infty})_{\cO}$ satisfies a $p$-adic
interpolation formula that may be described as follows (see
\cite[Theorem 7.1]{R} for the version given here, and also
\cite[Theorem II.4.14]{dS}. Note also that, as the notation
indeicates, $\cL_{\fp}$ depends upon a choice of prime $\fp$ lying
above $p$---cf. Remark \ref{R:switch} below).  For all pairs of
integers $j,k \in \bZ$ with $0 \leq -j <k$, and for all characters
$\chi: \Gal(K(E_p)/K) \to \ov{K}^{\times}$, we have
\begin{equation} \label{E:interpolate}
\L_{\fp}(\psi^k \psi^{*j} \chi) = A \cdot L(\psi^{-k} \ov{\psi}^{-j}
\chi^{-1}, 0).
\end{equation}
Here $L(\psi^{-k} \ov{\psi}^{-j} \chi^{-1},s)$ denotes the complex
Hecke $L$-function, and $A$ denotes an explicit, non-zero factor whose
precise description we shall not need.

Write $\langle \psi \rangle : \Gal(\ov{K}/K) \to 1 + p\bZ_p$ for the
composition of $\psi$ with the natural projection $\bZ_{p}^{\times}
\to 1+p\bZ_p$, and define $\langle \psi^* \rangle$ in a similar
manner. Define
$$
L_{\fp}(s):= \L_{\fp}(\psi \langle \psi \rangle^{s-1}),\quad L_{\fp}^{*}(s):=
\L_{\fp}(\psi^* \langle \psi^{*}\rangle^{s-1})
$$
for $s \in \bZ_p$. The character $\psi$ lies within the range of
interpolation of $\cL_{\fp}$, and the behaviour of $\cL_{\fp}$ at
$\psi$ is predicted by the $\fp$-adic Birch and
Swinnerton-Dyer conjecture for $E$ (see \cite[pages 133--134]{BGS},
\cite[Theorem V.8]{PR7}). Conjecturally, $\ord_{s=1} L_{\fp}(s)$ is
equal to the rank $r$ of $E(\bQ)$, and the exact value of
$\lim_{s \to 1}L_{\fp}(s)/[s-1]^r$ may be described in terms of various
arithmetic invariants associated to $E$.

The character $\psi^*$, however, lies outside the range of
interpolation of $\cL_{\fp}$, and the function $L_{\fp}^*(s)$ has not
  been studied nearly as much as $L_{\fp}(s)$. The first results
  concerning the behaviour of $L_{\fp}^*(s)$ were obtained by Karl
  Rubin (see \cite{R}, \cite{R1}). When $r \geq 1$, Rubin formulated a
  variant of the $\fp$-adic Birch and Swinnerton-Dyer conjecture for
  $L_\fp^*(s)$ which predicts that that $\ord_{s=1} L_{\fp}^*(s)$ is
  equal to $r-1$, and which gives a formula for $\lim_{s \to
    1}[L_{\fp}^*(s)/(s-1)^{r-1}]$. Under suitable hypotheses, Rubin
  showed that his conjecture is equivalent to the usual $\fp$-adic
  Birch and Swinnerton-Dyer conjecture, and he proved both conjectures
  when $r=1$. In the case $r=1$, he then used these results to give
  the first examples of a $p$-adic construction of a global point of
  infinite order in $E(\bQ)$ directly from the special value of a
  $p$-adic $L$-function.

On the other hand, when $r=0$, matters become quite different, and
the results of \cite{R}, \cite{R1} do not apply. It is not hard to
show that the functional equation satisfied by $\cL_{\fp}$ (see
\cite[\S6]{dS}) implies that $\ord_{s=1} L_{\fp}^*(s) \geq 1$. Rubin
speculated that perhaps $\ord_{s=1} L_{\fp}^*(s)=1$, and assuming that
this was true, he also raised the question of determining the value of
$\lim_{s \to 1}[L_\fp^*(s)/(s-1)]$ (see \cite[Remark on p.74]{R1}).

A natural approach to studying the behaviour of $\cL_{\fp}$ at
$\psi^*$ is to try to find a suitable Selmer group that governs the
behaviour of $L_{\fp}^{*}(s)$ at $s=1$. This may be done by using the
two-variable main conjecture to study certain Iwasawa modules
associated naturally to $L_{\fp}^{*}(s)$, as described in detail in
\cite[\S1--\S3]{A1}.  In \cite{A1}, we defined a \textit{restricted
  Selmer group} $\ch{\ms}_{\fp^*}(T^*) \subseteq H^1(K,T^*)$. This
restricted Selmer group is defined by \textit{reversing} the Selmer
conditions above $\fp$ and $\fp^*$ that are used to define the usual
Selmer group $\Sel(K,T^*)$. The $O_{K,{\fp^*}}$-module
$\ch{\ms}_{\fp^*}(K,T^*)$ is free of rank $|r-1|$, and if $r \geq 1$,
then in fact $\ch{\ms}_{\fp^*}(K,T^*) \subseteq \Sel(K,T^*)$ (see
\cite[Lemma 3.6, Proposition 6.5, and Proposition 6.7]{A1}). We also
defined a similar group $\ch{\ms}_{\fp}(K,T) \subseteq H^1(K,T)$, and
we constructed a $p$-adic height pairing
$$
[\,,\,]_{K,{\fp^*}}: \ch{\ms}_{\fp}(K,T) \times
\ch{\ms}_{\fp^*}(K,T^*) \to O_{K,{\fp^*}}.
$$
If $r \geq 1$, and if the $\fp^*$-adic Birch and Swinnerton-Dyer
conjecture is true, then $[\,,\,]_{K,\fp^*}$ is non-degenerate. We
conjectured that $[\,,\,]_{K,\fp^*}$ is also non-degenerate when
$r=0$.

It was shown in \cite{A1} that if $[\,,\,]_{K,\fp^*}$ is
non-degenerate and the $p$-primary part of $\sha(E/K)$ is finite, then
\begin{equation*}
\ord_{s= 1} L_\fp^*(s) = \rank_{O_{K,{\fp^*}}}(\ch{\ms}_{\fp^*}(K,T^*))
= |r-1|.
\end{equation*}
Under these assumptions, we also determined the value of $\lim_{s
\to 1} L_\fp^*(s)/(s-1)^{|r-1|}$ up to multiplication by a $p$-adic
unit (thereby recovering a weak form of \cite[Corollary 11.3]{R} in
the case $r \geq 1$). In particular, our results implied that if $r=0$
(in which case $\sha(E/K)$ is known to be finite; see \cite{Ru}) and
$[\,,\,]_{K,\fp^*}$ is non-degenerate, then $\ord_{s=1}L_\fp^*(s) =1$,
as was guessed by Rubin in \cite{R1}.

Suppose now that $r=0$. In this paper, we strengthen the results of
\cite{A1} by giving an unconditional proof of the fact that
$\ord_{s=1}(L_\fp^*(s)) = 1$, and we also determine the exact value of
the first derivative of $L_{\fp}^{*}(s)$ at $s=1$. We do this via an
approach involving elliptic units and explicit reciprocity laws
(cf. \cite{R1}), rather than the two-variable main conjecture and
Galois cohomology, as in \cite{A1}. In order to state our main
result, we must introduce some further notation.

Suppose that $ y \in \ch{\ms}_{\fp}(K,T)$ and $y^* \in
\ch{\ms}_{\fp^*}(K,T^*)$ are of infinite order, and let
\begin{equation*}
\exp_\fp^*: H^1(K_\fp, T^*) \to \bQ_p,\qquad \exp_{\fp^*}^{*}:
H^1(K_{\fp^*},T) \to \bQ_p
\end{equation*}
denote the Bloch-Kato dual exponential maps. Via localisation, these
induce maps (which we denote by the same symbols):
$$
\exp_\fp^*: \ch{\ms}_{\fp^*}(K,T^*) \to \bQ_p,\qquad \exp_{\fp^*}^{*}:
\ch{\ms}_\fp(K,T) \to \bQ_p.
$$
Write $\ff \subseteq O_K$ for the conductor of the Grossencharacter
associated to $E$, and let $\bN(\ff)$ denote the norm of this ideal.
Set
$$
\cL_{\fp}'(\psi^*) := \lim_{s \to 1} \frac{L_\fp^*(s)}{s-1}.
$$

The following result may perhaps be viewed as being an analogue of a
similar exceptional zero phenomenon observed in the work of Mazur,
Tate and Teitelbaum concerning $p$-adic Birch and Swinnerton-Dyer
conjectures for elliptic curves \textit{without} complex
multiplication (see \cite[especially Conjecture 2]{Gr94},
\cite[especially page 38]{MTT}). It relates the \textit{value} of
$\cL_{\fp}$ at a point within the range of $p$-adic interpolation to
the \textit{derivative} of $\cL_{\fp}$ at a point lying outside the
range of $p$-adic interpolation.

\begin{Thm} \label{T:A}
Suppose that  $\cL_\fp(\psi) \neq 0$. 

(a) The $p$-adic height pairing $[\,,\,]_{K,\fp^*}$ is non-degenerate
  and $\ord_{s=1}L_\fp^*(s) = 1$.

(b) We have that 
\begin{align*}
(p-1) \cdot &\left( 1-\frac{1}{\psi(\fp^*)} \right) \cdot
\left( 1-\frac{1}{\psi^*(\fp)} \right) \cdot \frac{\Omega_{\fp} \cdot
\cL_{\fp}'(\psi^*)}{[y, y^*]_{K,\fp^*}} = \\
&p \cdot \bN(\ff) \cdot \left( 1 - \frac{\psi^*(\fp)}{p} \right) \cdot \left(1 -
\frac{\psi(\fp^*)}{p} \right) \cdot
\frac{\cL_{\fp}(\psi)}{\Omega_{\fp^*} \cdot 
\exp^{*}_{\fp}(y^*) \cdot \exp^{*}_{\fp^*}(y)},
\end{align*}

where $\Omega_\fp$ and $\Omega_{\fp^*}$ are certain $p$-adic
periods defined using the formal group associated to $E$ (see Section
\ref{S:formal} below).
\end{Thm}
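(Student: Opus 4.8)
The plan is to set aside the two-variable main conjecture and the Galois-cohomological computations of \cite{A1}, and to argue instead directly with the Euler system of elliptic units together with explicit reciprocity laws, in the spirit of \cite{R1}. Since $r=0$, the modules $\ch{\ms}_{\fp}(K,T)$ and $\ch{\ms}_{\fp^*}(K,T^*)$ are free of rank one over $O_{K,\fp}$ and $O_{K,\fp^*}$ respectively; thus any elements $y,y^*$ of infinite order are nonzero scalar multiples of fixed generators, and, because both sides of (b) transform in the same way under $y\mapsto\lambda y$ and $y^*\mapsto\mu y^*$ (each carries one factor of $y$ and one of $y^*$ in the denominator), it suffices to prove (b) for a single convenient choice. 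The heart of the argument is to attach, to one norm-coherent system of elliptic units in the tower $\fK_\infty$, on the one hand the Katz $p$-adic $L$-function $\cL_\fp$ (via Coleman power series and the Coates--Wiles homomorphism, as in \cite[Ch.~II]{dS}), and on the other hand a pair of global cohomology classes $\eta\in\ch{\ms}_{\fp}(K,T)$, $\eta^*\in\ch{\ms}_{\fp^*}(K,T^*)$; one then expresses each of $\cL_\fp(\psi)$, $\cL_\fp'(\psi^*)$ and $[y,y^*]_{K,\fp^*}$ in terms of these classes and their localisations at $\fp$ and $\fp^*$.

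For the in-range value $\cL_\fp(\psi)$ I would combine the interpolation formula \eqref{E:interpolate} (with $k=1$, $j=0$, and $\chi$ trivial) with the explicit reciprocity law of Wiles, in the form developed by de Shalit and Rubin, which relates $\cL_\fp(\psi)$ to the dual exponentials $\exp^*_{\fp^*}(\eta)$ and $\exp^*_{\fp}(\eta^*)$ of the global elliptic-unit classes; the period $\Omega_{\fp^*}$ and the elementary factors $p$, $\bN(\ff)$, $1-\psi^*(\fp)/p$, $1-\psi(\fp^*)/p$ appearing in (b) enter through this comparison. For the out-of-range derivative $\cL_\fp'(\psi^*)$ the situation is an exceptional-zero one: as recalled in the introduction, $L_\fp^*(1)=\cL_\fp(\psi^*)=0$, and one needs a derivative (``exceptional case'') version of the reciprocity law, obtained by differentiating the Coleman-map description of $\cL_\fp$ along the line $\psi^*\langle\psi^*\rangle^{s-1}$ and invoking Perrin-Riou's analysis of the dual exponential in the exceptional case (cf.~\cite{PR7}). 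This produces $\cL_\fp'(\psi^*)$ in terms of the period $\Omega_\fp$, the constant $p-1$, the Euler factors $1-1/\psi(\fp^*)$ and $1-1/\psi^*(\fp)$, and a genuinely second-order (regulator) term; the last of these is precisely the $p$-adic height pairing $[y,y^*]_{K,\fp^*}$ of \cite{A1}, and this is the mechanism by which the height enters the formula.

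Combining these computations --- that is, eliminating the elliptic-unit classes $\eta,\eta^*$ and their local invariants between the expressions for $\cL_\fp(\psi)$, $\cL_\fp'(\psi^*)$ and $[y,y^*]_{K,\fp^*}$ --- and matching up the Euler factors and the periods $\Omega_\fp,\Omega_{\fp^*}$ yields identity (b). Part (a) then follows formally: the hypothesis $\cL_\fp(\psi)\neq0$, together with the non-vanishing of every Euler factor occurring in (b) (each of $\psi(\fp^*)$, $\psi^*(\fp)$ is a Weil number of complex absolute value $\sqrt{p}$, hence equals neither $1$ nor $p$, while $\psi^*(\fp)/p$ and $\psi(\fp^*)/p$ have absolute value $1/\sqrt{p}\neq1$), forces the right-hand side of (b) to be nonzero; hence $[y,y^*]_{K,\fp^*}\neq0$, which (since $\ch{\ms}_{\fp}(K,T)$ and $\ch{\ms}_{\fp^*}(K,T^*)$ are free of rank one) is exactly the non-degeneracy of $[\,,\,]_{K,\fp^*}$, and $\cL_\fp'(\psi^*)\neq0$, which together with the inequality $\ord_{s=1}L_\fp^*(s)\geq1$ coming from the functional equation for $\cL_\fp$ gives $\ord_{s=1}L_\fp^*(s)=1$.

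The step I expect to be the main obstacle is the derivative reciprocity law used to evaluate $\cL_\fp'(\psi^*)$: at a point lying \emph{outside} the range of interpolation the ordinary explicit reciprocity law degenerates, the Bloch--Kato exponential must be replaced by its dual, and one has to isolate cleanly both the exceptional-zero contribution (here the product of the Euler factors $1-1/\psi(\fp^*)$ and $1-1/\psi^*(\fp)$) and the second-order height term, keeping every constant exact rather than, as in \cite{A1}, only up to a $p$-adic unit. A secondary, essentially bookkeeping, difficulty is to pin down the normalisations of the periods $\Omega_\fp$, $\Omega_{\fp^*}$ (defined via the formal group of $E$) and of the Coleman map precisely enough that (b) holds on the nose, and to track the factor $p-1$, which comes from the change of variables between $\cL_\fp$ as an Iwasawa-theoretic object and the one-variable function $L_\fp^*(s)$.
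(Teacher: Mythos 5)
Your high-level strategy is the right one and matches the paper's: work with the Euler system of elliptic units and explicit reciprocity rather than the main conjecture, construct canonical classes from the norm-coherent systems $\theta_B(w), \theta_B(w^*)$, and use the rank-one freeness of $\ch{\ms}_{\fp}(K,T)$ and $\ch{\ms}_{\fp^*}(K,T^*)$ to reduce (b) to a single convenient choice of $y,y^*$ (this is exactly Theorem~\ref{T:quot}). But the step you yourself flag as the main obstacle is indeed where you and the paper part ways, and your version of it is not really an argument. You invoke a ``derivative/exceptional-case reciprocity law'' that is supposed to produce $\cL'_{\fp}(\psi^*)$ with the $p$-adic height $[y,y^*]_{K,\fp^*}$ appearing as a regulator term. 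The paper uses no such derived reciprocity law. Instead, $\cL'_{\fp}(\psi^*)$ is obtained by a purely formal Iwasawa-algebra manipulation: since $\theta_B(w^*)$ lies in $\cI^*$ times the local units and $\iota^*(\theta_B(-\bN(\ff)^{-1}w^*))=\cL_{\fp}|_{\cK^*_\infty}$, one writes $\theta_B(w^*)=\beta^{\vt^*}$ and applies the elementary identity $\bD^{*(1)}(\vt^*F)(\psi^*)=pF(\psi^*)$ (Lemma~\ref{L:dertheta}), reducing the derivative to the \emph{ordinary} value of $\iota^*(\beta)$ at $\psi^*$, evaluated by the ordinary Coates--Wiles/Wiles reciprocity (Theorem~\ref{T:keylink}(c)). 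The height pairing is computed \emph{independently}, via the local decomposition into Artin symbols, passage to Hilbert pairings, and again the ordinary Wiles reciprocity law for $\hat E$ (Propositions~\ref{P:height}, \ref{P:wiles}, Theorem~\ref{T:genht}); the coincidence that ties it to $\cL'_{\fp}(\psi^*)$ is that both are expressed in terms of the \emph{same} logarithmic sums, because $\sigma_{\fp}(w^*)^{\vt^*}=\theta_B(w^*)$ (Propositions~\ref{P:pain}, \ref{P:canhom}) and $\ov{\cE}^*_\infty$ has no $\vt^*$-torsion. So the height does not ``emerge'' from a derivative reciprocity law; that claim would need a genuine new argument.

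There is also a logical problem with deriving (a) from (b): identity (b) has $[y,y^*]_{K,\fp^*}$ and $\exp^*_{\fp}(y^*)\cdot\exp^*_{\fp^*}(y)$ in denominators, so it only makes sense once you already know those quantities are nonzero, which is essentially what (a) (plus non-vanishing of the dual exponential) asserts. The paper avoids the circularity by proving (a) first: Theorem~\ref{T:infinite} gives $\exp^*_{\fp}(s_{\fp^*})\doteq\cL_{\fp}(\psi)\neq 0$, hence $s_{\fp^*}\neq 0$; Theorem~\ref{T:nondeg}(a) then shows $\cL'_{\fp}(\psi^*)\neq 0$ by noting that otherwise $\ov{\cC}^*_\infty\subseteq\cI^{*2}(U^*_{\infty,\fp}\otimes\bQ)$ would force $s_{\fp^*}=0$; and Theorem~\ref{T:nondeg}(b) computes $[s_\fp,s_{\fp^*}]_{K,\fp^*}$ directly as an explicitly nonzero multiple of $\cL_{\fp}(\psi)\cdot\cL'_{\fp}(\psi^*)$. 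Only after all of this does the ratio argument of Theorem~\ref{T:quot} deliver (b). Your observation about the archimedean sizes of $\psi(\fp^*),\psi^*(\fp)$ is correct and is used implicitly, but it cannot replace the direct computation of $[s_\fp,s_{\fp^*}]_{K,\fp^*}$, which is the real content of the non-degeneracy statement.
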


It is interesting to compare Theorem \ref{T:A} with \cite[Theorem
  10.1]{R}. Both of these results are examples of the following more
general phenomenon concerning certain special values of the Katz
two-variable $p$-adic $L$-function $\cL_\fp$. Let $k \geq 0$ be an
integer, and set
$$
\phi_k:= \psi^{k+1} \psi^{*-k},\qquad \phi_k^*:= \psi^{-k} \psi^{*k+1}.
$$ Then we see from \eqref{E:interpolate} that $\phi_k$ lies within
the range of interpolation of $\cL_\fp$; for $k \geq 1$, the behaviour
of $\cL_\fp$ at $\phi_k$ is predicted by various conjectures due to
Bloch, Beilinson, Kato and Perrin-Riou. On the other hand, the
character $\phi_k^*$ lies outside the range of interpolation of
$\cL_\fp$, and as far the present author is aware, the behaviour of
$\cL_\fp$ at $\phi_k^*$ for $k \geq 1$ does not appear to have
previously been studied. Write $\langle \phi_k \rangle$ (respectively
$\langle \phi_k^* \rangle$) for the composition of $\phi_k$
(respectively $\phi_k^*$) with the natural projection
$\bZ_{p}^{\times} \to 1+p\bZ_p$, and define
$$
L_{\fp}(\phi_k,s):= \cL_{\fp}(\phi_k \langle \phi_k \rangle^{s-1}),
\qquad
L_{\fp}(\phi^*_k,s):= \cL_{\fp}(\phi^*_k \langle \phi^*_k
\rangle^{s-1})
$$
for $s \in \bZ_p$.

Using the techniques of \cite{R}, \cite{A1} and the present paper, it
may be shown that the orders of vanishing of $L_{\fp}(\phi_k,s)$ and
$L_{\fp}(\phi_k^*,s)$ at $s=1$ are of opposite parity, and that the
values of the first non-vanishing derivatives of $L_{\fp}(\phi_k,s)$
and $L_{\fp}(\phi_k^*,s)$ at $s=1$ are related in a manner similar to
\cite[Theorem 10.1]{R} (if $\cL_\fp(\phi_k) =0$) or to Theorem
\ref{T:A} above (if $\cL_\fp(\phi_k) \neq 0$). We shall discuss this
more fully in a future article (see \cite{A2}).

The strategy of the proof of Theorem \ref{T:A} is similar to that
employed in \cite{R}; however, because we work with restricted Selmer
groups rather than true Selmer groups, the details are rather
different. These differences mainly arise from the fact that the
$p$-adic height pairing $[\,,\,]_{K,\fp^*}$ on restricted Selmer
groups (when $r=0$) is somewhat more difficult to work with than is the
$p$-adic height pairing on true Selmer groups (when $r=1)$. The basic
ideas involved in the proof of Theorem \ref{T:A} may be described as
follows. Using elliptic units, we construct canonical elements
$$
s_\fp \in \ch{\ms}_\fp(K,T), \qquad s_{\fp^*} \in
\ch{\ms}_{\fp^*}(K,T^*).
$$ 
It follows from the proof of Theorem \ref{T:nondeg}(a) below that
$s_{\fp^*}$ is of infinite order only if $\cL'_{\fp}(\psi^*) \neq
0$. By analysing certain Kummer and cup product pairings, and using
Wiles's explicit reciprocity for formal groups, we prove that
\begin{equation} \label{E:sinf}
\exp_\fp^*(s_{\fp^*}) \doteq \cL_\fp(\psi),
\end{equation}
where the symbol ``$\doteq$'' denotes equality up to multiplication by
a non-zero factor. Hence we see that if $\cL_{\fp}(\psi) \neq
0$, then $s_{\fp^*}$ is indeed of infinite order, and so $L_{\fp}^{*}(s)$ has
a first-order zero at $s=1$.  

We then compute $[s_\fp,s_{\fp^*}]_{K,{\fp^*}}$ using Kummer theory,
Hilbert symbols, and Wiles's explicit reciprocity law, and we see that
\begin{equation} \label{E:sht}
[s_\fp,s_{\fp^*}]_{K,{\fp^*}} \doteq \cL_\fp(\psi) \cdot
\cL_{\fp}'(\psi^*)
\end{equation}
The proof of Theorem \ref{T:A} is then completed by showing that if $y
\in \ch{\ms}_{\fp}(K,T)$ and $y^* \in \ch{\ms}_{\fp^*}(K,T^*)$ are
  both of infinite order, then
\begin{equation*}
\frac{\exp_{\fp}^{*}(y^*) \cdot \exp_{\fp^*}^*(y)}{[y,
    y^*]_{K, {\fp^*}}} =
\frac{\exp_{\fp}^{*}(s_{\fp^*}) \cdot \exp_{\fp^*}^*(s_\fp)}{[s_\fp,
    s_{\fp^*}]_{K, {\fp^*}}},
\end{equation*}
and so the desired result follows by applying \eqref{E:sinf} and
\eqref{E:sht}.

An outline of the contents of this paper is as follows. In Section
\ref{S:selmer} we recall some basic properties of restricted Selmer
groups. In Section \ref{S:height} we describe a local decomposition of
the $p$-adic height pairing $[\,,\,]_{K,\fp^*}$ in terms of local
Artin symbols. We recall some basic facts concerning the formal group
$\hat{E}$ associated to $E$ in Section \ref{S:formal}, and we
establish a number of conventions for use in subsequent
calculations. We discuss properties of various Kummer pairings in
Section \ref{S:kummer}, and we use these pairings to compute the value
of $[\,,\,]_{K,\fp^*}$ on certain cohomology classes in restricted
Selmer groups that are constructed using global units. In Section
\ref{S:dual}, we use the results of Section \ref{S:kummer} to compute
certain special values of the dual exponential map in terms of
logarithmic derivatives of certain Coleman power series. In Section
\ref{S:elliptic}, we describe the construction of the canonical
elements $s_{\fp}$ and $s_{\fp^*}$ via elliptic units. Finally, in
Section \ref{S:special}, we apply our previous results to these
canonical elements, and we prove Theorem \ref{T:A}.

We conclude this Introduction by remarking that the methods and
results of this paper remain valid if we assume that $E$ is defined
over its field of complex multiplication $K$ rather than over $\bQ$.
\medskip


\noindent{}{\bf Notation and conventions.} Throughout this paper, $K$
denotes an imaginary quadratic field of class number one. If $L$ is
any field, we write $L^{\ab}$ for the maximal abelian extension of
$L$, and $\ov{L}$ for an algebraic closure of $L$.

For each integer $n \geq 1$, we write
$$
\cK_n:= K(E_{\pi^n}),\quad \cK_n^*:= K(E_{\pi^{*n}}), \quad \fK_n:=
K(E_{p^n}) = \cK_n \cdot \cK_n^*,
$$
and 
$$
\cK_\infty:= K(E_{\pi^\infty}), \quad \cK_\infty^*:=
K(E_{\pi^{*\infty}}), \quad \fK_\infty:= K(E_{p^\infty}).
$$ 
We also put $\cN_n:= \cK_n \cdot \cK_{\infty,}^{*}$, and we write
$\fm_{n,\fp}$ for the maximal ideal of the completion of the ring of
integers of $\cN_{n,\fp}$. The symbol $\cO$ denotes the completion of
the ring of integers of $\cK_{\infty,\fp}^{*}$.

For any extension $L/K$ we set $\Lambda(L):= \Lambda(\Gal(L/K)):=
\bZ_p[[\Gal(L/K)]]$, and $\Lambda(L)_{\cO}:= \cO[[\Gal(L/K)]]$. We
write
$$
\cI^*:= \Ker(\psi^*: \Lambda(\cK^*_\infty) \to \bZ_p), \quad
\cI:= \Ker(\psi: \Lambda(\cK_\infty) \to \bZ_p),
$$ and let $\vt^*$ and $\vt$ be the generators of $\cI^*$ and $\cI$
fixed in \cite[\S6]{R}; so $\vt^* = \gamma \psi^*(\gamma^{-1}) -1$,
where $\gamma$ is any topological generator of $\Gal(\cK_\infty^*/K)$
satisfying $\log_p(\psi^*(\gamma)) = p$, and $\vt$ is defined
analogously. 

We set $D_{\fp}:= K_{\fp}/O_{K,\fp}$ and $D_{\fp^*}:=
K_{\fp^*}/O_{K,\fp^*}$. If $M$ is any $\bZ_p$-module, we write
$M^{\land}$ for the Pontryagin dual of $M$.

For each integer $n \geq 1$, we let
$$
e_n: E_{\pi^n} \times E_{\pi^{*n}} \to \mu_{p^n}
$$ 
denote the Weil pairing, normalised as decribed in
\cite[\S3.1.2]{PR1} (the reader should note that this is \textit{not}
the same normalisation as that used in \cite{R}). This pairing
satisfies the identities
\begin{equation*}
e_n(\pi^* \vs_n, \vs_n^*) = e_n(\vs_n, \pi \vs_n)
\end{equation*}
for $\vs_n \in E_{\pi^n}$, $\vs_n^* \in E_{\pi^{*n}}$, and
\begin{equation*}
e_{n+1}(\vs_n, \vs_{n+1}^{*}) = e_n(\vs_n, \pi^*\vs^{*}_{n+1})
\end{equation*}
for $\vs_n \in E_{\pi^n},\quad \vs_{n+1}^{*} \in E_{\pi^{*(n+1)}}$.

We set $W:= E_{\pi^\infty}$ and $W^*:= E_{\pi^{*\infty}}$. We write
$T$ and $T^*$ for the $\pi$-adic and $\pi^*$-adic Tate modules of $E$
respectively. Let $w=[w_n]$ and $w^*=[w_n^*]$ denote generators of $T$
and $T^*$ respectively. 


We use the following notation to denote various unit groups:
\begin{align*}
&U_{n,\fp}:= \text{units in $\cK_{n,\fp}$ congruent to $1$ modulo
    $\fp$;} \\
&U_{n,\fp^*}:= \text{units in $\cK_{n,\fp^*}$ congruent to $1$ modulo
    $\fp^*$;} \\
&U_{\infty,\fp}:= \varprojlim U_{n,\fp},\quad U_{\infty,\fp^*}:= 
\varprojlim U_{n,\fp^*};\\
&U^{*}_{n,\fp}:= \text{units in $\cK^{*}_{n,\fp}$ congruent to $1$ modulo
    $\fp$;} \\
&U^{*}_{n,\fp^*}:= \text{units in $\cK^{*}_{n,\fp^*}$ congruent to $1$ modulo
    $\fp^*$;} \\
&U^{*}_{\infty,\fp}:= \varprojlim U^{*}_{n,\fp},\quad U^{*}_{\infty,\fp^*}:=
    \varprojlim U_{n,\fp^*},
\end{align*}
where all inverse limits are taken with respect to the obvious norm
maps. We also set
\begin{align*}
&\cE_n:= \text{global units of $\cK_n$},\quad \cE_n^*:= \text{global
  units of $\cK_n^*$};\\
&\ov{\cE}_n:= \text{the closure of the projection of $\cE_n$ into
  $U_{n,\fp}$}; \\
&\ov{\cE}^*_n:= \text{the closure of the projection of $\cE^*_n$ into
  $U^*_{n,\fp^*}$}; \\
&\ov{\cE}_{\infty}:= \varprojlim \ov{\cE}_n, \quad \ov{\cE}^{*}_{\infty}:=
  \varprojlim \ov{\cE}^*_n.
\end{align*}

\begin{remark} \label{R:leo}
Note that since the strong Leopoldt conjecture holds for all
abelian extensions of $K$ (see \cite{B}), we have that 
$$
\ov{\cE}_n \simeq \cE_n \otimes_{\bZ} \bZ_p, \quad 
\ov{\cE}^*_n \simeq {\cE}^*_n \otimes_{\bZ} \bZ_p,
$$ 
and so we may also view $\ov{\cE}_{\infty}$ as being a submodule of
$U_{\infty,\fp^*}$ and $\ov{\cE}^{*}_{\infty}$ as being a submodule of
$U^{*}_{\infty,\fp}$. We shall do this without further comment several
times in what follows. \qed
\end{remark}

\begin{remark} \label{R:switch} It is important for the reader to bear
in mind that every theorem or construction in this paper that depends
upon a choice of prime $\fp$ of $K$ lying above $p$ also has a
corresponding version in which the roles of $\fp$ and $\fp^*$ are
interchanged. We shall sometimes make use of this fact without stating
it explicitly. \qed 
\end{remark}


\section{Restricted Selmer groups} \label{S:selmer}

In this section we shall recall some basic properties of restricted
Selmer groups. We refer the reader to \cite[Sections 3 and 4]{A1} for
more complete details.

Suppose that $F/K$ is any finite extension. For any place $v$ of $F$,
we define $H^1_f(F_v,W)$ to be the image of $E(F_v) \otimes D_\fp$
under the Kummer map
$$
E(F_v) \otimes_{O_K} D_\fp \to H^1(F_v,W),
$$
and we define $H^1_f(F_v,W^*)$ in a similar manner. Note that
$H^1_f(F_v,W)=0$ if $v\nmid \fp$. We also set
\begin{align*}
&H^1_f(F_v,E_{\pi^n}):= \Image[E(F_v)/\pi^nE(F_v) \to
H^1(F_v,E_{\pi^n})],\\
&H^1_f(F_v,E_{\pi^{*n}}):= \Image[E(F_v)/\pi^{*n}E(F_v) \to
H^1(F_v,E_{\pi^{*n}})].
\end{align*}

Suppose that $M \in \{W,W^*,E_{\pi^n}, E_{\pi^{*n}}\}$ and that
$\fq \in \{\fp,\fp^*\}$ . If $c \in H^1(F,M)$, then we write
$\loc_v(c)$ for the image of $c$ in $H^1(F_v,M)$. We define

$\bullet$ the {\it true Selmer group} $\Sel(F,M)$ by
$$
\Sel(F,M) = \left\{ c \in H^1(F,M) \mid \loc_v(c) \in
H^1_f(F_v,M)\, \text{for all $v$} \right\};
$$

$\bullet$ the {\it relaxed Selmer group} $\Sel_{\rel}(F,M)$ by
$$
\Sel_{\rel}(F,M) = \left\{ c \in H^1(F,M) \mid \loc_v(c) \in
H^1_f(F_v,M)\, \text{for all $v$ not dividing $p$}
\right\};
$$

$\bullet$ the {\it $\fq$-restricted Selmer group} (or simply {\it
restricted Selmer group} for short when $\fq$ is understood)
$\ms_\fq(F,M)$ by
$$
\ms_\fq(F,M) = \left\{ c \in \Sel_{\rel}(F,M) \mid \loc_v(c) = 0\,
\text{for all $v$ dividing $\fq$} \right\}.
$$
(The terminology `restricted Selmer group' is meant to reflect a
choice of a combination of relaxed and strict Selmer conditions at
places above $p$.)

We also define
$$
\ch{\ms}_{\fq}(F,T):= \varprojlim_{n} \ms_{\fq}(F,E_{\pi^n}),\quad
\ch{\ms}_{\fq}(F,T^*):= \varprojlim_{n} \ms_{\fq}(F,E_{\pi^{*n}}).
$$

If $L/K$ is an infinite extension, we define
$$
\ms_{\fq}(L,M) = \varinjlim \ms_{\fq}(L',M),
$$
where the direct limits are taken with respect to restriction over all
subfields $L' \subset L$ finite over $K$.


We record the following standard cohomological result that will be
used later.

\begin{lemma} \label{L:resinj} Let $n \geq 0$ be an integer, and
  suppose that $L$ and $M$ are fields with $K \subseteq L \subseteq M
  \subseteq \cN_n$. Then, for every integer $m \geq 1$, the
  restriction maps
$$ 
H^1(L, E_{\pi^m}) \to H^1(M,E_{\pi^m}),\qquad H^1(L, \mu_{p^m}) \to
H^1(M, \mu_{p^m})
$$
are injective and they induce isomorphisms
$$
H^1(L,E_{\pi^m}) \simeq H^1(M,E_{\pi^m})^{\Gal(M/L)},\qquad 
H^1(L,\mu_{p^m}) \simeq H^1(M, \mu_{p^m})^{\Gal(M/L)}.
$$ 

A similar result holds if $L$ and $M$ are replaced by $L_{\fp}$ and
$M_{\fp}$ with $K_{\fp} \subseteq L_{\fp} \subseteq M_{\fp}
\subseteq \cN_{n,\fp}$.
\end{lemma}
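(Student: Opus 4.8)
The plan is to run the inflation--restriction (five-term) exact sequence for the extension $M/L$, which is automatically abelian since it is an intermediate extension of the abelian extension $\cN_n/K$ (and likewise $M_\fp/L_\fp$ sits inside the abelian extension $\cN_{n,\fp}/K_\fp$). Write $G=\Gal(M/L)$ and let $N$ stand for $E_{\pi^m}$ or $\mu_{p^m}$, with $N(M)=N^{\Gal(\ov L/M)}$. The sequence
$$
0\to H^1(G,N(M))\to H^1(L,N)\xrightarrow{\res} H^1(M,N)^{G}\to H^2(G,N(M))\to H^2(L,N)
$$
shows that the two assertions of the lemma for $N$ are equivalent to: (i) $H^1(G,N(M))=0$; and (ii) the transgression $H^2(G,N(M))\to H^2(L,N)$ is injective. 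I would verify both directly rather than attempt a reduction to the case $L=K$, $M=\cN_n$ (which is not formal): the point is that $G=\Gal(M/L)$, being a subquotient of $\Gal(\cN_n/K)\cong(\bZ/p^n)^\times\times\Gal(\cK_\infty^*/K)$ (resp.\ of $\Gal(\cN_{n,\fp}/K_\fp)$), is itself a direct product $\Delta\times\Gamma$ of a finite group $\Delta$ of order prime to $p$ and a pro-$p$ group $\Gamma$.

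For (i): since $p\nmid|\Delta|$ and $N(M)$ is $p$-power torsion, the Hochschild--Serre spectral sequence for $1\to\Gamma\to\Delta\times\Gamma\to\Delta\to1$ degenerates and gives $H^j(G,N(M))=H^j(\Gamma,N(M))^{\Delta}$ for all $j$, so one is reduced to computing $H^1(\Gamma,N(M))$ as a $\bZ/p^m$-module --- via the standard description of the cohomology of an abelian pro-$p$ group acting through the appropriate power of the cyclotomic character (resp.\ of $\psi$) --- and to checking that the residual $\Delta$-action on it has no invariants. For $N=\mu_{p^m}$ this is exactly the classical vanishing used to compare $H^1$ along the layers of $K(\mu_{p^\infty})/K$, and for $N=E_{\pi^m}$ it is the same computation with $\psi$ in place of the cyclotomic character. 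It is here that the hypotheses $p>3$ and the good ordinary choice of $\fp$ are used, to guarantee that the relevant residual character is nontrivial on the prime-to-$p$ part.

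For (ii): in the global case I would use the Weil pairing $e_m$, which identifies $N$ with a $\mu_{p^m}$-twist of the complementary module, to compare the transgression with corestriction and control it by Poitou--Tate global duality; in the local case one uses local Tate duality to identify $H^2(L_\fp,N)$ with $H^0(L_\fp,N^{\vee}(1))^{\land}$ and checks directly that the composite vanishes. I expect (ii) --- equivalently, the surjectivity of $\res$ onto $H^1(M,N)^{G}$, as opposed to the mere injectivity of $\res$ --- to be the main technical point; step (i) is essentially the finite-level form of the $H^1$-vanishing statements already present in \cite{R} and \cite{dS}, applied both globally and at $\fp$.
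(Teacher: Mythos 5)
Your inflation--restriction framework, together with the decomposition of $G=\Gal(M/L)$ into a prime-to-$p$ subgroup $\Delta$ and a pro-$p$ subgroup $\Gamma$, is the right set of tools; the paper itself gives no argument beyond a citation of Perrin-Riou. Two remarks.

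First, the cleaner route is to aim directly for $N(M)^{\Delta}=0$. If that holds, run Hochschild--Serre with respect to the normal subgroup $\Delta\lhd G$: since $|\Delta|$ is prime to $p$ and $N(M)$ is $p$-primary, $H^q(\Delta,N(M))=0$ for $q\geq 1$, while $H^0(\Delta,N(M))=N(M)^{\Delta}=0$ by assumption, so \emph{every} term of the spectral sequence vanishes and $H^j(G,N(M))=0$ for all $j\geq 0$. Both conclusions of the lemma --- injectivity of restriction and the identification of its image --- then drop out of the five-term sequence at once, and your separate step (ii), with its appeal to the transgression, Poitou--Tate and local duality, is entirely unnecessary.

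Second, and more seriously, there is a genuine gap, located exactly where the lemma as literally stated appears to fail. Take $n=1$, $L=\cK_1=K(E_\pi)$, $M=\cN_1=\cK_1\cK_\infty^*$, and $N=E_\pi$. Then $N(M)=E_\pi$, and $G=\Gal(\cN_1/\cK_1)\cong\Gal(\cK_\infty^*/K)\cong\bZ_p^\times$ acts \emph{trivially} on $E_\pi$, because $\Gal(\ov{K}/\cK_1)$ already does. Hence
$$
H^1(G,N(M))\cong\Hom(\bZ_p^\times,\bZ/p)\cong\bZ/p\neq 0,
$$
which by inflation--restriction is the kernel of $H^1(L,E_\pi)\to H^1(M,E_\pi)$. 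In this example $\Delta\cong\mu_{p-1}$ is nontrivial but acts trivially on $N(M)$: the ``residual character'' you invoke vanishes on $\Delta$ precisely because $\cK_1\subseteq L$. The same failure occurs whenever $\cK_1\subseteq L$ (for $E_{\pi^m}$) or $K(\mu_p)\subseteq L$ (for $\mu_{p^m}$); your write-up asserts the nonvanishing of the residual character on $\Delta$ but never addresses why this should hold for arbitrary $L$, and it does not.

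What saves the paper is that the lemma is only ever invoked with $L=K$ (Corollary \ref{C:riso}) or $L=K_\fp$ (around Definition \ref{D:normkum}). In that case, if $N(M)\neq 0$ then $\cK_1$ (resp.\ $K(\mu_p)$) is contained in $M$ but not in $L=K$, so the surjection $\Gal(M/K)\twoheadrightarrow\Gal(\cK_1/K)\cong(\bZ/p)^\times$ (resp.\ $\twoheadrightarrow\Gal(K(\mu_p)/K)$) is already nontrivial on the prime-to-$p$ part $\Delta$ of the abelian group $\Gal(M/K)$; this is where $p>3$ and good ordinary reduction enter, as you correctly anticipate. Then $\Delta$ acts on the cyclic module $N(M)$ through a character that is nontrivial modulo $p$, so $N(M)^{\Delta}=0$ and the argument of the first paragraph applies. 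Your sketch is in this spirit, but to be a proof it must either add the hypothesis $\cK_1\not\subseteq L$ (resp.\ $K(\mu_p)\not\subseteq L$), or restrict to $L=K$, $L=K_\fp$ as in the actual applications.
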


\begin{proof} This is quite standard, and may be proved via the
  argument given in \cite[p.40]{PR7}, for example.
\end{proof}

We now explain how elements in restricted Selmer groups may be
constructed by combining Kummer theory on $E$ with Kummer theory on
the multiplicative group (see Proposition \ref{P:kummerinj} below). In
order to do this, we require several preparatory lemmas. Our starting
point is the following result of Perrin-Riou.

\begin{lemma} \label{L:kummer1}
There is an $O_K$-linear isomorphism of $\Gal(\cK^*_n/K)$-modules
\begin{equation} \label{E:mystiso}
H^1(\cK^*_n,E_{\pi^{n}}) \xrightarrow{\sim}
\Hom(E_{\pi^{*n}},\cK_n^{*\times}/\cK_{n}^{*\times p^n});\quad f \mapsto
\tilde{f}.
\end{equation}
For each place $v$ of $\cK^*_n$, there is also a corresponding local
$O_K$-linear isomorphism
$$
H^1(\cK^{*}_{n,v},E_{\pi^{n}}) \xrightarrow{\sim}
\Hom(E_{\pi^{*n}},\cK_{n,v}^{*\times}/\cK_{n,v}^{*\times p^n}).
$$
\end{lemma}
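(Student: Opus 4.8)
The plan is to construct the map \eqref{E:mystiso} via the Weil pairing together with Kummer theory, following Perrin-Riou. First I would recall that the Weil pairing $e_n\colon E_{\pi^n}\times E_{\pi^{*n}}\to\mu_{p^n}$ is a perfect pairing of $\Gal(\ov K/K)$-modules, $O_K$-bilinear in the appropriate sense, and hence induces a $\Gal(\ov K/K)$-equivariant isomorphism $E_{\pi^n}\xrightarrow{\sim}\Hom(E_{\pi^{*n}},\mu_{p^n})$. Applying $H^1(\cK_n^*,-)$ and using that $\cK_n^*$ contains $E_{\pi^{*n}}$ (so that the Galois action on $E_{\pi^{*n}}$ is trivial over $\cK_n^*$, and $\Hom(E_{\pi^{*n}},-)$ commutes with taking cohomology since $E_{\pi^{*n}}$ is a finite free $\bZ/p^n$-module), one gets
$$
H^1(\cK_n^*,E_{\pi^n})\;\xrightarrow{\sim}\;H^1\bigl(\cK_n^*,\Hom(E_{\pi^{*n}},\mu_{p^n})\bigr)\;\xrightarrow{\sim}\;\Hom\bigl(E_{\pi^{*n}},H^1(\cK_n^*,\mu_{p^n})\bigr).
$$
Then I would invoke Kummer theory on $\mathbb{G}_m$, namely the isomorphism $H^1(\cK_n^*,\mu_{p^n})\xrightarrow{\sim}\cK_n^{*\times}/\cK_n^{*\times p^n}$ coming from the Kummer sequence $1\to\mu_{p^n}\to\mathbb{G}_m\xrightarrow{x\mapsto x^{p^n}}\mathbb{G}_m\to1$ and Hilbert 90. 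Composing these gives the desired isomorphism $f\mapsto\tilde f$, and one checks it is $O_K$-linear and $\Gal(\cK_n^*/K)$-equivariant because each arrow in the chain is. The local statement is proved by exactly the same argument with $\cK_n^*$ replaced by its completion $\cK_{n,v}^*$ at $v$ (noting $\cK_{n,v}^*$ still contains $E_{\pi^{*n}}$), using local Kummer theory $H^1(\cK_{n,v}^*,\mu_{p^n})\simeq\cK_{n,v}^{*\times}/\cK_{n,v}^{*\times p^n}$.

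The only genuine point requiring care — and what I would flag as the main obstacle — is the precise bookkeeping of the $O_K$-module structures and of how the chosen Weil-pairing normalisation (the one from \cite[\S3.1.2]{PR1}, with the identities recorded in the Notation section relating $e_n(\pi^*\vs_n,\vs_n^*)$ and $e_n(\vs_n,\pi\vs_n)$, and the compatibility $e_{n+1}(\vs_n,\vs_{n+1}^*)=e_n(\vs_n,\pi^*\vs_{n+1}^*)$) interacts with the $O_{K,\fp}$- versus $O_{K,\fp^*}$-actions on $E_{\pi^n}$ and $E_{\pi^{*n}}$: one must verify that $O_K$ acting on $E_{\pi^n}$ corresponds under $e_n$ to the contragredient (i.e. the appropriate twisted) action on $\Hom(E_{\pi^{*n}},\mu_{p^n})$, so that the final identification is $O_K$-linear with the stated source and target. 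This is a formal check, but it is where the normalisation conventions have to be used honestly; everything else is a direct concatenation of perfect pairings, Hilbert 90, and Kummer theory. Since the result is quoted from Perrin-Riou, I would keep the write-up brief, giving the chain of isomorphisms and referring to \cite{PR1} (or the analogous treatment in \cite{dS}) for the verification of equivariance and the compatibility of structures, and noting that the local assertion follows \emph{mutatis mutandis}.
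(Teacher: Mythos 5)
Your proposal is correct and follows the same route as the paper: the Weil pairing identifies $E_{\pi^n}$ with $\Hom(E_{\pi^{*n}},\mu_{p^n})$, the $\Hom$ is pulled out of $H^1(\cK_n^*,-)$ because $E_{\pi^{*n}}$ is a trivial Galois module over $\cK_n^*$, and Kummer theory on $\mathbb{G}_m$ supplies the last identification; unwinding this chain yields precisely the explicit cocycle formula $\tilde f(\vs^*)=[\sigma\mapsto e_n(f(\sigma),\vs^*)]$ that the paper records. The paper likewise defers the verification of equivariance and $O_K$-linearity to Perrin-Riou's Lemme 3.8, so your plan matches it both in strategy and in level of detail.
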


\begin{proof} See \cite[Lemme 3.8]{PR1}. The isomorphism
\eqref{E:mystiso} is defined as follows. Let $f \in
H^1(\cK_n^*,E_{\pi^{n}})$, and recall that
$$
e_n: E_{\pi^n} \times E_{\pi^{*n}} \to \mu_{p^n}
$$ 
denotes the Weil pairing. We identify $\cK_n^{*\times}/\cK_{n}^{*\times
p^n}$ with $H^1(\cK_n^*, \mu_{p^n})$ via Kummer theory. If $\vs^* \in
E_{\pi^{*n}}$, then $\tilde{f}(\vs^*) \in H^1(\cK_n^*, \mu_{p^n})$ is
defined to be the element represented by the cocycle
$$
\sigma \mapsto e_n(f(\sigma),\vs^*)
$$
for all $\sigma \in \Gal(\ov{K}/\cK_n^*)$.
\end{proof}

\begin{corollary} \label{C:riso}
There are isomorphisms
\begin{align*}
&r_n: H^1(K,E_{\pi^n}) \xrightarrow{\sim} \Hom(E_{\pi^{*n}},
\cK_{n}^{*\times}/\cK_{n}^{*\times p^{n}})^{\Gal(\cK_n^*/K)} \\
&r^*_n: H^1(K,E_{\pi^{*n}}) \xrightarrow{\sim} \Hom(E_{\pi^{n}},
\cK_{n}^{\times}/\cK_{n}^{\times p^{n}})^{\Gal(\cK_n/K)}.
\end{align*}
\end{corollary}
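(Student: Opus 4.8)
The plan is to deduce Corollary \ref{C:riso} from Lemma \ref{L:kummer1} together with Lemma \ref{L:resinj}. The basic mechanism is inflation–restriction: for $M = E_{\pi^n}$ one has the exact sequence
\[
0 \to H^1(\Gal(\cK_n^*/K), E_{\pi^n}^{\Gal(\ov{K}/\cK_n^*)}) \to H^1(K, E_{\pi^n}) \xrightarrow{\res} H^1(\cK_n^*, E_{\pi^n})^{\Gal(\cK_n^*/K)} \to H^2(\Gal(\cK_n^*/K), E_{\pi^n}^{\Gal(\ov{K}/\cK_n^*)}).
\]
First I would observe that $\Gal(\cK_n^*/K)$ acts on $E_{\pi^n}$ through $\psi \bmod \pi^n$, while it acts on the coefficients $E_{\pi^n}^{\Gal(\ov{K}/\cK_n^*)} = E_{\pi^n}$ — wait, that is the whole module — so one needs the triviality of the relevant cohomology groups $H^i(\Gal(\cK_n^*/K), E_{\pi^n})$ for $i=1,2$. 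Since $\Gal(\cK_n^*/K)$ acts on $E_{\pi^n}$ via the character $\psi$, which is nontrivial modulo $\pi$ because $\fp \neq \fp^*$ (so the residue characteristics differ and $\psi$ restricted to inertia at $\fp^*$ is unramified while the module is ramified — more simply, $\Gal(\cK_\infty^*/K) \cong O_{K,\fp^*}^\times$ acts trivially on $E_{\pi^n}$ only through a finite quotient of order prime to $p$, as $E_{\pi^n}$ is a $p$-group and $\cK_n^*/K$ has degree prime to... ). The cleanest route: $\cK_n^* = K(E_{\pi^{*n}})$ and $K(E_{\pi^n})$ are linearly disjoint over $K$ (distinct primes above $p$), so $\Gal(\cK_n^*/K)$ fixes no nonzero element of $E_{\pi^n}$ except $0$, i.e. $E_{\pi^n}^{\Gal(\cK_n^*/K)} = 0$; but that is not quite what the cohomology vanishing needs either. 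The genuinely robust statement is Lemma \ref{L:resinj} applied with $L = K$ and $M = \cK_n^*$ (both inside $\cN_n$): it asserts directly that $H^1(K, E_{\pi^m}) \to H^1(\cK_n^*, E_{\pi^m})$ is injective with image the Galois invariants. That is exactly the map $r_n$ for $m = n$, after composing with the isomorphism \eqref{E:mystiso} of Lemma \ref{L:kummer1}.

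So the key steps, in order, are: (i) invoke Lemma \ref{L:resinj} with $L=K$, $M = \cK_n^*$, $m=n$ to get $H^1(K, E_{\pi^n}) \xrightarrow{\sim} H^1(\cK_n^*, E_{\pi^n})^{\Gal(\cK_n^*/K)}$; (ii) apply the $O_K$-linear, $\Gal(\cK_n^*/K)$-equivariant isomorphism \eqref{E:mystiso} of Lemma \ref{L:kummer1}, which therefore identifies the $\Gal(\cK_n^*/K)$-invariants on both sides, giving
\[
H^1(\cK_n^*, E_{\pi^n})^{\Gal(\cK_n^*/K)} \xrightarrow{\sim} \Hom(E_{\pi^{*n}}, \cK_n^{*\times}/\cK_n^{*\times p^n})^{\Gal(\cK_n^*/K)};
\]
(iii) compose to define $r_n$; (iv) run the identical argument with the roles of $\fp$ and $\fp^*$ interchanged (Remark \ref{R:switch}) to obtain $r_n^*$. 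One small point worth checking explicitly is that the Weil-pairing description of \eqref{E:mystiso} really is $\Gal(\cK_n^*/K)$-equivariant — but this is already built into the statement of Lemma \ref{L:kummer1} (``isomorphism of $\Gal(\cK_n^*/K)$-modules''), so taking invariants is automatic, and the $O_K$-linearity is likewise inherited.

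The only genuine obstacle is making sure the hypotheses of Lemma \ref{L:resinj} are actually met: we need $K \subseteq K \subseteq \cK_n^* \subseteq \cN_n$. By definition $\cN_n = \cK_n \cdot \cK_{\infty}^*$, which certainly contains $\cK_n^* = K(E_{\pi^{*n}}) \subseteq \cK_\infty^*$, so this holds. I do not anticipate any deeper difficulty; the corollary is essentially a formal consequence of stitching Lemma \ref{L:kummer1} to Lemma \ref{L:resinj}, and the ``proof'' should be no more than a few lines to that effect, perhaps with a parenthetical remark that for $r_n^*$ one applies the displayed isomorphism of Lemma \ref{L:kummer1} in its $\fp \leftrightarrow \fp^*$ form, whose existence is guaranteed by Remark \ref{R:switch}.
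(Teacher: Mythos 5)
Your argument is correct and matches the paper's own proof exactly: the paper simply cites Lemmas~\ref{L:resinj} and~\ref{L:kummer1}, which is precisely the composition you spell out in steps (i)--(iv), with the $\fp\leftrightarrow\fp^*$ symmetry of Remark~\ref{R:switch} handling $r_n^*$. The initial digression into inflation--restriction and potential vanishing of $H^i(\Gal(\cK_n^*/K),\,\cdot\,)$ is a dead end that you correctly abandon, but it is harmless since the final argument relies only on Lemma~\ref{L:resinj} as a black box.
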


\begin{proof} This follows from Lemmas \ref{L:resinj} and
  \ref{L:kummer1} (cf. also \cite[Lemma 2.1]{R} or \cite[Lemme
    3.8]{PR1}).
\end{proof}

\begin{lemma} \label{L:kummer2}
For each place $v$ of $\cK^*_n$ with $v \nmid \fp^*$, there is
an $O_K$-linear isomorphism
$$
E(\cK^{*}_{n,v})/\pi^{n}E(\cK^{*}_{n,v}) \xrightarrow{\sim}
\Hom(E_{\pi{*^n}}, O_{\cK^{*}_{n,v}}^{\times}/O_{\cK^{*}_{n,v}}^{\times p^n}).
$$
\end{lemma}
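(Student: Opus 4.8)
The plan is to deduce this from Lemma \ref{L:kummer1} (in its local form) by cutting down along the places where $v \nmid \fp^*$. First I would recall that Lemma \ref{L:kummer1} gives, for each place $v$ of $\cK_n^*$, an $O_K$-linear isomorphism
$$
H^1(\cK^{*}_{n,v},E_{\pi^{n}}) \xrightarrow{\sim}
\Hom(E_{\pi^{*n}},\cK_{n,v}^{*\times}/\cK_{n,v}^{*\times p^n}),
$$
so it suffices to identify the subgroup $E(\cK^{*}_{n,v})/\pi^{n}E(\cK^{*}_{n,v})$ of $H^1(\cK^{*}_{n,v},E_{\pi^n})$ (via the Kummer map, which is injective here since $E(\cK^*_{n,v})$ is finitely generated, or at any rate has no $\pi$-divisible part) with the subgroup $\Hom(E_{\pi^{*n}}, O_{\cK^{*}_{n,v}}^{\times}/O_{\cK^{*}_{n,v}}^{\times p^n})$ of $\Hom(E_{\pi^{*n}},\cK_{n,v}^{*\times}/\cK_{n,v}^{*\times p^n})$. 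The inclusion $O_{\cK^{*}_{n,v}}^{\times}/O_{\cK^{*}_{n,v}}^{\times p^n} \hookrightarrow \cK_{n,v}^{*\times}/\cK_{n,v}^{*\times p^n}$ is itself injective because $v \nmid p$ (the condition $v \nmid \fp^*$ together with the context forces $v$ to be prime to $p$, since $v \mid \fp$ would lie in $\cK_n$-type considerations; in any case for $v$ not dividing $p$ the valuation map is surjective onto $\bZ/p^n$ with kernel exactly the units mod $p$th powers), so both sides genuinely sit inside the groups appearing in Lemma \ref{L:kummer1}.

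The heart of the argument is the standard local computation: for a place $v$ of good reduction for $E$ with residue characteristic $\neq p$, the image of the Kummer map $E(\cK^{*}_{n,v})/\pi^{n}E(\cK^{*}_{n,v}) \to H^1(\cK^{*}_{n,v},E_{\pi^n})$ is precisely the unramified cohomology $H^1_{\mathrm{ur}}(\cK^{*}_{n,v}, E_{\pi^n}) = H^1(\Gal(\cK^{*,\mathrm{ur}}_{n,v}/\cK^{*}_{n,v}), E_{\pi^n})$. So I would first check that every $v \nmid \fp^*$ occurring here has residue characteristic prime to $p$ and good reduction — this is where one uses that $E$ has good reduction away from the conductor and that the only primes above $p$ in $\cK_n^*$ are those dividing $\fp^*$ (the prime $\fp$ is totally split in $\cK_n^* = K(E_{\pi^{*n}})$-adic considerations, so primes above $\fp$ are unramified of residue characteristic $p$ but... ) — more carefully, $\cK_n^*/K$ is ramified only at $\fp^*$, so a place $v \nmid \fp^*$ is unramified over $K$; if moreover $v \nmid p$ then good reduction gives the unramified description. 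Granting this, I would then show under the Weil-pairing isomorphism of Lemma \ref{L:kummer1} that $H^1_{\mathrm{ur}}(\cK^{*}_{n,v}, E_{\pi^n})$ corresponds exactly to $\Hom(E_{\pi^{*n}}, O_{\cK^{*}_{n,v}}^{\times}/O_{\cK^{*}_{n,v}}^{\times p^n})$: a cocycle $f$ is unramified iff it is trivial on inertia, and since $E_{\pi^{*n}}$ is itself unramified at $v$ and the Weil pairing $e_n$ identifies $\mu_{p^n}$-cohomology with $\cK_{n,v}^{*\times}/(\cK_{n,v}^{*\times})^{p^n}$ compatibly, $\tilde f(\vs^*)$ lands in the unramified part $H^1_{\mathrm{ur}}(\cK^{*}_{n,v},\mu_{p^n}) = O_{\cK^{*}_{n,v}}^{\times}/(O_{\cK^{*}_{n,v}}^{\times})^{p^n}$ for every $\vs^*$ exactly when $f$ is unramified. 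Counting orders (both subgroups have index equal to $\#(\bZ/p^n)^{\dim}$ in their respective ambient groups, matching under the isomorphism) then upgrades the inclusion to the claimed isomorphism.

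Concretely, the steps in order are: (1) verify that $v \nmid \fp^*$ with the reduction/ramification hypotheses implies $v$ has residue characteristic $\neq p$ and $E$ has good reduction at $v$; (2) recall/cite that the Kummer image is $H^1_{\mathrm{ur}}(\cK^{*}_{n,v},E_{\pi^n})$ in that setting (this is classical — e.g.\ via \cite[Lemma 2.1]{R} or the analogous statement in \cite{PR1}); (3) recall that under Kummer theory $H^1_{\mathrm{ur}}(\cK^{*}_{n,v},\mu_{p^n}) = O^{\times}/O^{\times p^n}$; (4) trace through the explicit formula $\tilde f(\vs^*)=[\sigma \mapsto e_n(f(\sigma),\vs^*)]$ from the proof of Lemma \ref{L:kummer1} to see that the $v$-unramified condition on $f$ matches membership in $\Hom(E_{\pi^{*n}}, O^{\times}/O^{\times p^n})$, using that $E_{\pi^{*n}}$ is $v$-unramified; (5) conclude by order count or by exhibiting a mutual inverse. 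I expect step (1) — the bookkeeping on which places $v$ of $\cK_n^*$ are prime to $p$ and of good reduction — to be the main (though minor) obstacle, since the notation forces $\fp$ to behave subtly in $\cK_n^* = K(E_{\pi^{*n}})$ (it is unramified there, of residue characteristic $p$, yet $v \nmid \fp^*$ does not a priori exclude $v \mid \fp$); presumably the intended reading, consistent with Lemma \ref{L:kummer2}'s use later, is that one also implicitly has $v \nmid \fp$, i.e.\ $v \nmid p$, and I would state that explicitly. Everything else is the routine "finite/singular cohomology at a good place of residue characteristic prime to $p$" computation, so I would not grind through it.
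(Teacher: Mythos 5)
The paper's own ``proof'' of Lemma~\ref{L:kummer2} is simply the citation to \cite[Lemme~3.11]{PR1}, so there is no in-text argument to compare against; your proposal instead reconstructs the content of that lemma directly, and the reconstruction is the standard and correct one for the places where the result is actually used. Your route --- restrict the local isomorphism of Lemma~\ref{L:kummer1}, identify the Kummer image of $E(\cK^*_{n,v})/\pi^n E(\cK^*_{n,v})$ with $H^1_{\mathrm{ur}}(\cK^*_{n,v},E_{\pi^n})$, then match unramified classes with $\Hom(E_{\pi^{*n}},O^\times/O^{\times p^n})$ via the explicit cocycle formula $\tilde f(\vs^*)=[\sigma\mapsto e_n(f(\sigma),\vs^*)]$ and an order count --- is exactly the argument underlying Perrin-Riou's lemma, and your steps~(1)--(5) are all sound for places $v$ of residue characteristic prime to $p$ at which $E$ has good reduction.

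The one thing you flagged as a ``minor obstacle'' is in fact the substantive point, and you are right to insist on it. As literally stated the hypothesis is $v\nmid\fp^*$, which does not exclude $v\mid\fp$; at such $v$ the residue characteristic is $p$, the module $E_{\pi^n}=\widehat E[\pi^n]$ is the (ramified) formal-group part of $E[p^n]$ at the ordinary prime $\fp$, and the Kummer image is the Bloch--Kato finite subgroup rather than unramified cohomology, so the $H^1_{\mathrm{ur}}$ description and the order count you rely on both break down. Your proposed fix --- read the hypothesis as $v\nmid p$ --- is the correct one, and is consistent with the only use of the lemma in the paper: in Corollary~\ref{C:kummer3}, condition~(b) is the image of this lemma, but at $v\mid\fp$ it is subsumed by the stronger condition~(a) (vanishing of the localisation), so the lemma's content is only ever invoked at $v\nmid p$. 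It would be worth making the restriction $v\nmid p$ explicit rather than relying on the reader to infer it.

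In short: your argument is correct on the domain where it is needed, your identification of the $v\mid\fp$ gap is genuine and not merely cosmetic, and the paper itself sidesteps the issue by delegating to \cite{PR1}. The only caution is not to present the lemma as proved for all $v\nmid\fp^*$ by this method; either state it for $v\nmid p$, or supply a separate argument (e.g.\ via local Tate duality and the formal group at $\fp$) for $v\mid\fp$.
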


\begin{proof} See \cite[Lemme 3.11]{PR1}.
\end{proof}

\begin{corollary} \label{C:kummer3}
Suppose that $h \in H^1(\cK^*_n, E_{\pi^{n}})$. Then $h \in \ms_{\fp}(\cK^*_n,
E_{\pi^{n}})$ if and only if, for each $\vs^* \in E_{\pi^{*n}}$, the
following local conditions are satisfied:

(a) $\tilde{h}(\vs^*) \in \cK_{n,v}^{*\times p^n}$ for all $v \mid \fp$;

(b) $p^n \mid v_{\cK^*_n}(\tilde{h}(\vs^*))$ for all $v \nmid \fp^*$.

(Note that we impose no local conditions at places lying above
$\fp^*$.)
\end{corollary}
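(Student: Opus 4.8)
The plan is to unwind the definition of the restricted Selmer group $\ms_{\fp}(\cK^*_n, E_{\pi^n})$ through the isomorphism $f \mapsto \tilde f$ of Lemma \ref{L:kummer1}, translating the defining local conditions at the places of $\cK^*_n$ into conditions on the homomorphisms $\tilde h(\vs^*) \in \cK^{*\times}_n/\cK^{*\times p^n}_n$. By definition, $h \in \ms_{\fp}(\cK^*_n, E_{\pi^n})$ means $h \in \Sel_{\rel}(\cK^*_n, E_{\pi^n})$, i.e. $\loc_v(h) \in H^1_f(\cK^*_{n,v}, E_{\pi^n})$ for all $v \nmid p$, together with $\loc_v(h) = 0$ for all $v \mid \fp$, and no condition at $v \mid \fp^*$. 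So there are exactly two families of conditions to translate: the vanishing condition at $\fp$, and the finite/unramified condition at places away from $p$.

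First I would treat the places $v \mid \fp$. By the local isomorphism in Lemma \ref{L:kummer1}, $\loc_v(h) = 0$ in $H^1(\cK^*_{n,v}, E_{\pi^n})$ if and only if the corresponding local homomorphism $E_{\pi^{*n}} \to \cK^{*\times}_{n,v}/\cK^{*\times p^n}_{n,v}$ is trivial, which, since the global-to-local map is compatible with the isomorphisms, says precisely that $\tilde h(\vs^*)$ maps to the trivial class in $\cK^{*\times}_{n,v}/\cK^{*\times p^n}_{n,v}$ for every $\vs^* \in E_{\pi^{*n}}$ — that is, $\tilde h(\vs^*) \in \cK^{*\times p^n}_{n,v}$. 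This is condition (a). The compatibility of $f \mapsto \tilde f$ with localisation is immediate from the cocycle description ($\sigma \mapsto e_n(f(\sigma), \vs^*)$ restricts to the decomposition group at $v$), so this step is essentially formal.

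Next, for $v \nmid \fp^*$ — which, since $H^1_f = 0$ away from $p$ and there is no condition imposed above $\fp^*$, means exactly the places where a relaxed-Selmer condition is active together with those above $\fp$ already handled — I would use Lemma \ref{L:kummer2}. That lemma identifies $E(\cK^*_{n,v})/\pi^n E(\cK^*_{n,v})$, hence $H^1_f(\cK^*_{n,v}, E_{\pi^n})$, with $\Hom(E_{\pi^{*n}}, O^\times_{\cK^*_{n,v}}/O^{\times p^n}_{\cK^*_{n,v}})$ inside $\Hom(E_{\pi^{*n}}, \cK^{*\times}_{n,v}/\cK^{*\times p^n}_{n,v})$. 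Under the Kummer identification $\cK^{*\times}_{n,v}/\cK^{*\times p^n}_{n,v} \cong H^1(\cK^*_{n,v}, \mu_{p^n})$, the subgroup coming from units is the unramified part, and membership in it is detected by the valuation: a class lies in the image of $O^\times_{\cK^*_{n,v}}$ modulo $p^n$-th powers iff its valuation is divisible by $p^n$ (here one uses that $v \nmid p$ so the residue characteristic is prime to $p$ and the valuation map on $\cK^{*\times}_{n,v}/\cK^{*\times p^n}_{n,v}$ has the units as kernel). This gives condition (b): $p^n \mid v_{\cK^*_n}(\tilde h(\vs^*))$ for all $\vs^*$. Conversely, conditions (a) and (b) together force $\loc_v(h)$ into $H^1_f$ at every place away from $\fp^*$ and to vanish above $\fp$, with the $\fp^*$-places unconstrained, which is exactly membership in $\ms_{\fp}(\cK^*_n, E_{\pi^n})$.

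The main obstacle, such as it is, will be bookkeeping rather than conceptual: one must check carefully that the \emph{global} homomorphism $\tilde h$ restricts place-by-place to the local homomorphisms attached to $\loc_v(h)$ under the local version of Lemma \ref{L:kummer1}, and one must be precise that ``$v_{\cK^*_n}(\tilde h(\vs^*))$'' is well-defined modulo $p^n$ — i.e. that the ambiguity in choosing a representative in $\cK^{*\times}_n$ changes the valuation only by a multiple of $p^n$ — so that condition (b) makes sense as stated. Both points are routine once the identifications are set up, so I would state them briefly and cite Lemma \ref{L:resinj}, Lemma \ref{L:kummer1}, and Lemma \ref{L:kummer2} for the substance.
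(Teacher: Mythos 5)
Your proposal is correct and follows essentially the same route as the paper: both unwind the isomorphism $f \mapsto \tilde f$ and reduce everything to Lemmas \ref{L:kummer1} and \ref{L:kummer2}, the paper doing so in a single line of citation while you supply the expected detail. One small caution: your parenthetical ``since $H^1_f = 0$ away from $p$'' holds for the divisible module $W$ (as remarked in Section \ref{S:selmer}) but not for the finite module $E_{\pi^n}$, whose $H^1_f(\cK^*_{n,v}, E_{\pi^n})$ at $v \nmid p$ is the image of $E(\cK^*_{n,v})/\pi^n E(\cK^*_{n,v})$ and need not vanish; happily that remark is not load-bearing, and the rest of your argument handles $v \nmid p$ correctly via Lemma \ref{L:kummer2} and the valuation criterion.
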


\begin{proof} This follows from Lemmas \ref{L:kummer1} and
\ref{L:kummer2}, using the definition of the isomorphism
\ref{E:mystiso} (see also \cite[Lemme 3.8]{PR1}).
\end{proof}

\begin{proposition} \label{P:kummerinj}
There are natural injections
\begin{align*}
&\rho: \Hom(T^*,(U^{*}_{\infty,\fp} \otimes
\bQ)/\ov{\cE}^*_\infty)^{\Gal(\cK_\infty^*/K)} \hookrightarrow
\ch{\ms}_{\fp}(K,T), \\
&\rho^*: \Hom(T,(U_{\infty,\fp^*} \otimes
\bQ)/\ov{\cE}_\infty)^{\Gal(\cK_\infty/K)} \hookrightarrow
\ch{\ms}_{\fp^*}(K,T^*)
\end{align*}
\end{proposition}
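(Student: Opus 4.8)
The plan is to construct $\rho$ as the inverse limit of maps defined separately at each finite level $n$, the map $\rho^*$ then being obtained from $\rho$ by the symmetry of Remark~\ref{R:switch}. First I would reduce everything to a purely Kummer-theoretic question by means of the dictionaries already in place: by definition $\ch{\ms}_{\fp}(K,T) = \varprojlim_n \ms_{\fp}(K,E_{\pi^n})$; by Corollary~\ref{C:riso} the map $r_n$ identifies $H^1(K,E_{\pi^n})$ with $\Hom(E_{\pi^{*n}},\cK_n^{*\times}/\cK_n^{*\times p^n})^{\Gal(\cK_n^*/K)}$; and under this identification Corollary~\ref{C:kummer3} describes $\ms_{\fp}(K,E_{\pi^n})$ as the set of those $\tilde f$ for which, for every $\vs^* \in E_{\pi^{*n}}$, the element $\tilde f(\vs^*)$ is a $p^n$-th power locally at each place above $\fp$ and has valuation divisible by $p^n$ at each place not above $\fp^*$. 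Thus it is enough to produce, compatibly in $n$, a map from $\Hom(T^*,(U^{*}_{\infty,\fp}\otimes\bQ)/\ov{\cE}^*_\infty)^{\Gal(\cK_\infty^*/K)}$ into the subgroup of $\Hom(E_{\pi^{*n}},\cK_n^{*\times}/\cK_n^{*\times p^n})^{\Gal(\cK_n^*/K)}$ cut out by these two local conditions, and then to verify injectivity.

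For the finite-level construction I would use that $E_{\pi^{*n}}$ is free of rank one over $O_K/\fp^{*n}$, generated by the image $w^*_n$ of the fixed generator $w^* = [w^*_n]$ of $T^*$, so that a $\Gal(\cK_n^*/K)$-equivariant homomorphism out of $E_{\pi^{*n}}$ is determined by the image of $w^*_n$, which must lie in the $\psi^*$-eigenspace of $\cK_n^{*\times}/\cK_n^{*\times p^n}$ for $\Gal(\cK_n^*/K)$. Given $\phi$, the equivariance condition forces $\phi(w^*)$ into the submodule of $(U^{*}_{\infty,\fp}\otimes\bQ)/\ov{\cE}^*_\infty$ annihilated by $\cI^*$; choosing a representative in $U^{*}_{\infty,\fp}\otimes\bQ$ and clearing its denominator produces a norm-coherent family of local units at $\fp$ in the tower $\{\cK_m^*\}$, canonical up to a power of $p$ and up to an element of $\ov{\cE}^*_\infty$. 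Reducing at level $n$ modulo $p^n$ and realising the outcome as a class in $\cK_n^{*\times}/\cK_n^{*\times p^n}$ (a step which, in itself, requires a local-to-global argument), and then letting $\tilde f_n$ be the homomorphism sending $w^*_n$ to that class, one gets candidate elements $r_n^{-1}(\tilde f_n) \in H^1(K,E_{\pi^n})$ which, by the norm-coherence, are compatible under the transition maps and so assemble into $(r_n^{-1}(\tilde f_n))_n \in \varprojlim_n H^1(K,E_{\pi^n})$ --- provided the local conditions of Corollary~\ref{C:kummer3} are satisfied.

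The main obstacle is exactly that last proviso: showing that each $\tilde f_n$ meets conditions (a) and (b) of Corollary~\ref{C:kummer3}, so that the image lands in the \emph{restricted} Selmer group and not merely in $\Sel_{\rel}(K,E_{\pi^n})$ or $\Sel(K,E_{\pi^n})$; this is where the precise shape of the source --- local units \emph{at $\fp$}, divided by the \emph{global} units $\ov{\cE}^*_\infty$, and tensored with $\bQ$ --- does its work. Condition~(b), divisibility of valuations away from $\fp^*$, should come from the fact that the elements of $\ov{\cE}^*_\infty$ are genuine global units, hence of trivial valuation at every finite place, so that quotienting by $\ov{\cE}^*_\infty$ removes precisely the potential valuation contributions away from $\fp$ and $\fp^*$; and no condition at all is imposed at $\fp^*$, matching the relaxed Selmer condition there. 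Condition~(a), local \emph{triviality} (not just unramifiedness) at $\fp$, is the delicate one: because $E$ has good reduction at $\fp \neq \fp^*$, the prime $\fp$ is unramified in $\cK_\infty^*/K$, so a local unit there only produces an unramified class at $\fp$, and one has to use the divisibility present in $U^{*}_{\infty,\fp}\otimes\bQ$ together with the $\psi^*$-eigenspace constraint to upgrade this to triviality modulo $p^n$-th powers. Granting the local conditions, injectivity of $\rho$ follows by reversing the construction: an element of the kernel forces $\tilde f_n = 0$ at every level (since $r_n$ is an \emph{isomorphism}, by Corollary~\ref{C:riso}), hence yields a norm-coherent unit family trivial modulo $p^n$-th powers and modulo $\ov{\cE}^*_\infty$ at all $n$, which on passing back to the limit and tensoring with $\bQ$ forces $\phi(w^*) = 0$ and so $\phi = 0$.
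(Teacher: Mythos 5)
Your overall strategy---reduce to Kummer theory via $r_n$, work level by level, and verify the Selmer conditions of Corollary~\ref{C:kummer3}---is in the same spirit as the paper's, but the execution has genuine gaps at precisely the places where the argument has content.

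The central problem is your construction of the finite-level class. You propose to lift $\phi(w^*)$ to $U^*_{\infty,\fp}\otimes\bQ$, clear denominators, project to level $n$, and then ``realise the outcome as a class in $\cK_n^{*\times}/\cK_n^{*\times p^n}$ (a step which, in itself, requires a local-to-global argument)''---but that parenthetical is exactly the step that cannot be hand-waved, because $U^*_{\infty,\fp}$ consists of \emph{local} units at $\fp$, which have no global incarnation on their own. The paper's construction avoids this: the finite-level map $f_n$ is defined to land in $\Hom(E_{\pi^{*n}},\cE^*_n/\cE_n^{*p^n})^{\Gal(\cK^*_\infty/K)}$, i.e.\ with values in \emph{global} units modulo $p^n$-th powers, because after projecting $(U^*_{\infty,\fp}\otimes\bQ)/\ov{\cE}^*_\infty \to (U^*_{n,\fp}\otimes\bQ)/\ov{\cE}^*_n$ and applying the $p^n$-th power map, the result falls into $\ov{\cE}^*_n/\ov{\cE}_n^{*p^n}\cong \cE^*_n/\cE_n^{*p^n}$ (using Remark~\ref{R:leo}). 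Once one sees this, your condition~(b) is immediate---global units have trivial valuation at every finite place---whereas your justification (``quotienting by $\ov{\cE}^*_\infty$ removes precisely the potential valuation contributions away from $\fp$ and $\fp^*$'') garbles the mechanism: the quotient by $\ov{\cE}^*_\infty$ is not subtracting valuations but is what makes the $p^n$-th power land in the global unit group in the first place.

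Two further gaps. First, the paper's class is $\rho(f)=[(p-1)(\pi^*)^n r_n^{-1}(f_n)]$; the normalising factor $(p-1)(\pi^*)^n$ is what makes the family compatible under the transition maps (cf.\ \cite[Lemme~3.16]{PR1}), and your remark that compatibility follows ``by the norm-coherence'' does not supply it. Second, you correctly identify condition~(a) (local triviality, not mere unramifiedness, at $v\mid\fp$) as the delicate point, but you do not prove it; the paper takes a different route here, translating via \cite[Theorem~3.1, Proposition~3.2, Corollary~3.3]{A1} the membership $r_n^{-1}(f_n)\in\ms_\fp(K,E_{\pi^n})$ into unramifiedness of the restriction to $H^1(\fK_\infty,E_{\pi^n})$ outside $\fp^*$, and then invokes the argument of \cite[Lemmas~2.1 and~2.3]{R}, rather than attempting a direct verification of Corollary~\ref{C:kummer3}(a) as you do. Your sketch---unramifiedness of $\fp$ in $\cK^*_\infty/K$ plus ``divisibility and the $\psi^*$-eigenspace constraint''---names the right ingredients but does not produce the upgrade from unramified to trivial, so this part of your argument is missing, not merely abbreviated.
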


\begin{proof}
The proof of this result is essentially the same, \textit{mutatis
  mutandis}, as that of \cite[Proposition 2.4]{R}. The map $\rho$ is
  defined as follows.

For any $f \in \Hom(T^*,(U^{*}_{\infty,\fp} \otimes
\bQ)/\ov{\cE}^*_\infty)^{\Gal(\cK_\infty^*/K)}$ and any integer $n
\geq 1$, we define $f_n \in \Hom(E_{\pi^{*n}},
\cE^{*}_{n}/\cE_{n}^{*p^n})^{\Gal(\cK^*_\infty/K)}$ to be the image of $f$
under the following composition of maps:
\begin{align*}
\Hom(T^*,(U^{*}_{\infty,\fp} \otimes
\bQ)/\ov{\cE}^*_\infty)^{\Gal(\cK_\infty^*/K)} &\to 
\Hom(T^*,(U^{*}_{n,\fp} \otimes
\bQ)/\ov{\cE}^*_n)^{\Gal(\cK_\infty^*/K)} \\
&\to \Hom(E_{\pi^{*n}}, \cE^*_n/\cE_{n}^{*p^n})^{\Gal(\cK^*_\infty/K)},
\end{align*}
where the first arrow is the map induced by the natural projection
$U^{*}_{\infty,\fp} \to U^{*}_{n,\fp}$, and the second arrow is
induced by raising to the $p^n$-th power in $U^{*}_{n,\fp}$.

We define
\begin{equation} \label{E:rho}
\rho(f):= [(p-1)(\pi^{*})^{n} r_{n}^{-1}(f_n)] \in \varprojlim_n
H^1(K,E_{\pi^n}).
\end{equation} 
It follows from \cite[Lemma 3.16]{PR1} that $\rho(f)$ does indeed lie
in $\varprojlim_nH^1(K,E_{\pi^n})$. It is not hard to check from the
definition that $\rho$ is injective. It follows from \cite[Theorem
3.1, Proposition 3.2 and Corollary 3.3]{A1} that $r_{n}^{-1}(f_n) \in
\ms_{\fp}(K,E_{\pi^n})$ if and only if the restriction of
$r_{n}^{-1}(f_n)$ to $H^1(\fK_\infty,E_{\pi^n})$ is unramified outside
$\fp^*$. It may be shown via an argument very similar to that given in
\cite[Lemmas 2.1 and 2.3]{R} that this in fact the case.
\end{proof}

We shall use Proposition \ref{P:kummerinj} to produce canonical
elements in restricted Selmer groups by applying $\rho$ and $\rho^*$
to certain Galois-equivariant homomorphisms that are constructed using
norm-coherent sequences of elliptic units (see Section
\ref{S:elliptic} below).

\section{The $p$-adic height pairing on restricted Selmer
  groups} \label{S:height}

Our goal in this section is to describe a local decomposition of the
$p$-adic height pairing
\begin{equation} \label{E:lazy}
[\,,\,]_{K,{\fp^*}}: \ch{\ms}_{\fp}(K,T) \times
\ch{\ms}_{\fp^*}(K,T^*) \to O_{K,{\fp^*}}.
\end{equation}
in terms of Artin symbols; this is analogous to the local
decomposition of the standard $p$-adic height pairing on true Selmer
groups described in \cite[Lemme 3.19]{PR1}. This local decomposition
will be used in Section \ref{S:kummer} to determine certain values of
$[\,,\,]_{K,{\fp^*}}$ explicitly; these will in turn play a key r\^ole
in showing that the pairing \eqref{E:lazy} is non-degenerate when
$r=0$ in Section \ref{S:special}.

We begin by recalling the outlines of the main steps in the
construction of $[\,,\,]_{K,{\fp^*}}$. For more complete details, we
refer the reader to \cite[\S4]{A1}.

Let $\cY(\cK_\infty^*)$ denote the Galois group over $\cK_\infty^*$ of
the maximal abelian pro-$p$ extension of $\cK_\infty^*$ that is
unramified away from $\fp$ and totally split at all places of
$\cK_\infty^*$ lying above $\fp^*$. The first step in the construction
of the pairing $[\,,\,]_{K,{\fp^*}}$ is the construction of an
isomorphism
\begin{equation} \label{E:keyiso}
\Psi_K: \ch{\ms}_{\fp}(K,T) \xrightarrow{\sim} 
\Hom(T^*, \cY(\cK_\infty^*))^{\Gal(\cK_{\infty}^{*}/K)}.
\end{equation}
This isomorphism is constructed as follows. Write $J_n$ for the group
of finite ideles of $\cK_n^*$, and let $V_n$ denote the subgroup of
$J_n$ whose components are equal to $1$ at all places dividing $\fp$
and are units at all places not dividing $\fp^*$. Set 
$$
C_n:= J_n/(V_n \cdot \cK_{n}^{*\times}),\qquad \Omega_n:= \prod_{v
  \mid \fp} \mu_{p^n}(\cK_{n,v}^{*}),
$$
and write $C_n(p)$ for the $p$-primary subgroup of
$C_n$. Note that the order of $\Omega_n$ remains bounded as $n$
varies. We view $\Omega_n$ as being a subgroup of $C_n(p)$ via the
obvious embedding of $\Omega_n$ into $J_n$.

Using Kummer theory (cf. Corollary \ref{C:kummer3} above), one
constructs an exact sequence
\begin{equation*}
0 \to \Hom(E_{\pi^{*n}}, \Omega_n)^{\Gal(\cK_{n}^{*}/K)} \to
\Hom(E_{\pi^{*n}},C_n)^{\Gal(\cK_{n}^{*}/K)} \xrightarrow{\eta_n}
\ms_{\fp}(K,E_{\pi^n}) \to 0
\end{equation*}
(see \cite[Proposition 4.6]{A1}). Let $\eta_n'$ denote the map
obtained from $\eta_n$ via passage to the quotient by
$\Ker(\eta_n)$. It may be shown that passing to inverse limits over
the maps $\eta_{n}'^{-1}$ yields an isomorphism
$$
\Xi_K: \varprojlim {\ms}_{\fp}(K,E_{\pi^{n}}) = \ch{\ms}_{\fp}(K,T)
\xrightarrow{\sim} \Hom(T^*, \varprojlim C_n(p))^{\Gal(\cK^*_\infty/K)},
$$
where the inverse limit $\varprojlim C_n(p)$ is taken with respect
to the norm maps $\cK_{n}^{*\times} \to \cK_{n-1}^{*\times}$. One then
shows via class field theory (along with the fact that the weak
$p$-adic Leopoldt conjecture holds for $K$) that there is an
isomorphism
\begin{equation} \label{E:classiso}
\Hom(T^*, \varprojlim C_n(p))^{\Gal(\cK^*_\infty/K)} \simeq
\Hom(T^*,\cY(\cK_\infty^*))^{\Gal(\cK_\infty^*/K)},
\end{equation}
and composing this last isomorphism with with $\Xi_K$ yields the
desired isomorphism $\Psi_K$.

Next, by suitably interpreting restricted Selmer groups in terms of
certain Galois groups (see \cite[Theorem 3.1]{A1}), one shows that
there is a natural homomorphism
$$
\beta_K: \Hom(T^*, \cY(\cK^*_\infty))^{\Gal(\cK^*_\infty/K)} \to
\Hom_{O_{K,\fp^*}}(\ch{\ms}_{\fp^*}(K,T^*) , O_{K,\fp^*}).
$$
We thus obtain a map
$$
\beta_K \circ \Psi_K: \ch{\ms}_{\fp}(K,T) \to
\Hom_{O_{K,\fp^*}}(\ch{\ms}_{\fp^*}(K,T^*) , O_{K,\fp^*}),
$$
and this yields the $p$-adic height pairing pairing
$$
[\,,\,]_{K,\fp^*}: \ch{\ms}_\fp(K,T) \times \ch{\ms}_{\fp^*}(K,T^*)
\to O_{K,\fp^*}
$$ 
on restricted Selmer groups.

In order to describe the local decomposition of $[\,,\,]_{K,\fp^*}$,
we must introduce some further notation.

Suppose that
$$
y = [y_n] \in \ch{\ms}_{\fp}(K,T), \qquad   y^* = [y^*_n] \in
\ch{\ms}_{\fp^* }(K,T^*).
$$
For each positive integer $n$, we define $q_n$ to be the map
$$
q_n: \ch{\ms}_{\fp}(K,T) \xrightarrow{\Psi_K} 
\Hom(T^*, \cY(\cK^*_\infty))^{\Gal(\cK^*_\infty/K)} \to
\Hom(E_{\pi^{*n}},C_n)^{\Gal(\cK_{\infty}^{*}/K)},
$$
where the second arrow is the natural quotient map afforded by the
isomorphism \eqref{E:classiso}.

For each $\vs^* \in E_{\pi^{*n}}$, let $n(\vs^*)$ denote the exact
power of $\pi^*$ that kills $\vs^*$. Let $S_{n,\vs^*}(y_n)$ denote any
representative of $\eta_{n}^{-1}(y_n)(\vs^*)$ in $J_n$. For each
finite place $v$ of $K$, define $\{y,y^*\}_{n,v}^{(\vs^*)}$ to be the
unique element of $O_K/\pi^{*n(\vs^*)}O_K$ such that
$$
\{y,y^*\}_{n,v}^{(\vs^*)} \cdot \vs^* = y_n^*([S_{n,\vs^*}(y_n)_v,
K_{v}^{\ab}/\cK_{n,v}]),
$$
where $[S_{n,\vs^*}(y_n)_v, K_{v}^{\ab}/\cK_{n,v}] \in
\Gal(K_{v}^{\ab}/K_{n,v})$ is the obvious local Artin symbol.

\begin{proposition} \label{P:height}
(cf. \cite[Lemma 3.19]{PR1})

(a) For any $\vs^* \in E_{\pi^{*n}}$, we have
\begin{equation} \label{E:art}
[y,y^*]_{K,\fp^*} \cdot \vs^* = y_n^*([q_n(y)(\vs^*), K^{\ab}/\cK_n^*]),
\end{equation}
where $[q_n(y)(\vs^*), K^{\ab}/\cK_n^*] \in \Gal(K^{\ab}/\cK_n^*)$ is
the obvious global Artin symbol.
\smallskip

(b) We have
\begin{equation} \label{E:locdec}
[y,y^*]_{K,\fp^*} \equiv \sum_v \{y,y^*\}_{n,v}^{(\vs^*)}
\pmod{\pi^{*n(\vs^*)} O_{K,\fp^*}},
\end{equation}
where the sum is over all finite places $v$ of $\cK_n^*$.
\end{proposition}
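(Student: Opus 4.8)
The plan is to trace through the construction of $[\,,\,]_{K,\fp^*}$ step by step, unwinding each isomorphism at finite level $n$, and then invoke the classical class-field-theoretic identity expressing a global Artin symbol as a product of local ones.

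\textbf{Part (a).} First I would fix $\vs^* \in E_{\pi^{*n}}$ and start from the right-hand side. The class $q_n(y) \in \Hom(E_{\pi^{*n}},C_n)^{\Gal(\cK_\infty^*/K)}$ is, by construction of $\Psi_K$ via $\Xi_K$ and the isomorphism \eqref{E:classiso}, a lift of $\eta_n'^{-1}(y_n)$; that is, $\eta_n(q_n(y)) = y_n$ modulo the relevant compatibilities. The image $q_n(y)(\vs^*) \in C_n = J_n/(V_n \cdot \cK_n^{*\times})$ represents an idele class whose global Artin symbol $[q_n(y)(\vs^*), K^{\ab}/\cK_n^*]$ lies in $\Gal(K^{\ab}/\cK_n^*)$; since $q_n(y)(\vs^*)$ is killed by $\pi^{*n(\vs^*)}$ in $C_n$ and $\cY(\cK_\infty^*)$ is totally split above $\fp^*$ and unramified away from $\fp$, this symbol acts on $E_{\pi^{*n(\vs^*)}}$ through the map $\beta_K$ used to define the pairing. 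Chasing the definition of $\beta_K$ (which is exactly the pairing of $\cY(\cK_\infty^*)$-valued homomorphisms against $\ch{\ms}_{\fp^*}(K,T^*)$ coming from \cite[Theorem 3.1]{A1}), one sees that evaluating $y_n^*$ on this symbol reproduces $[y,y^*]_{K,\fp^*} \cdot \vs^*$ by the very definition of $[\,,\,]_{K,\fp^*} = \beta_K \circ \Psi_K$. So the content of (a) is really just bookkeeping: verifying that the natural quotient map $\Hom(T^*,\cY(\cK_\infty^*)) \to \Hom(E_{\pi^{*n}},C_n)$ in the definition of $q_n$ is compatible with the local reciprocity maps, and that the $\Gal(\cK_\infty^*/K)$-equivariance makes the formula independent of the chosen $\vs^*$ in the expected sense. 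I would cite \cite[Lemme 3.19]{PR1} for the analogous computation on true Selmer groups and indicate that the restricted case is formally identical once the groups $C_n$, $V_n$, $\Omega_n$ of \cite[\S4]{A1} replace their true-Selmer counterparts.

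\textbf{Part (b).} Given (a), part (b) is the decomposition of the global Artin symbol into local symbols. By global class field theory, for an idele $a \in J_n$ representing the class $q_n(y)(\vs^*) \in C_n$, the global Artin symbol $[q_n(y)(\vs^*), K^{\ab}/\cK_n^*]$ restricted to a finite abelian extension equals $\prod_v [a_v, K_v^{\ab}/\cK_{n,v}]$, the product of local Artin symbols over all places $v$ of $\cK_n^*$ (the product being finite and the archimedean contributions vanishing since $K$ is imaginary quadratic and we are in a pro-$p$ situation). The subtlety is that $q_n(y)(\vs^*)$ is a class in $C_n = J_n/(V_n \cdot \cK_n^{*\times})$, not in $J_n$ itself; one must check that choosing the representative $S_{n,\vs^*}(y_n) \in J_n$ of $\eta_n^{-1}(y_n)(\vs^*)$ is legitimate, i.e.\ that changing the representative by an element of $V_n \cdot \cK_n^{*\times}$ changes each local term $\{y,y^*\}_{n,v}^{(\vs^*)}$ only by a total amount that is zero modulo $\pi^{*n(\vs^*)}$ — this follows because $V_n$ has trivial local symbol at places above $\fp$ and outside $\fp^*$ by its definition, and $\cK_n^{*\times}$ contributes trivially by the product formula. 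Then, plugging $a = S_{n,\vs^*}(y_n)$ into the identity of (a) and applying $y_n^*$ termwise gives $[y,y^*]_{K,\fp^*}\cdot\vs^* = \sum_v \{y,y^*\}_{n,v}^{(\vs^*)}\cdot\vs^*$, which is \eqref{E:locdec}.

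\textbf{Main obstacle.} The hard part is not any single computation but the careful matching of normalisations across the three isomorphisms $\Xi_K$, \eqref{E:classiso}, and $\beta_K$: one must confirm that the quotient map defining $q_n$ is precisely the one induced by $\Hom(T^*,\varprojlim C_n(p)) \to \Hom(E_{\pi^{*n}},C_n)$ at level $n$, and that the local Artin symbol conventions in \cite[Theorem 3.1]{A1} agree with those used here, so that no stray sign or power of $\pi^*$ appears. I would handle this by first pinning down the level-$n$ compatibility of $\Psi_K$ explicitly, then reducing everything to the statement of \cite[Lemme 3.19]{PR1} via the dictionary of \cite[\S4]{A1}, and finally checking the representative-independence in $C_n$ using the defining properties of $V_n$. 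Everything else is a routine diagram chase combined with the standard global-to-local decomposition of Artin symbols.
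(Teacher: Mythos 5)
Your proposal tracks the paper's proof essentially step for step: part (a) is the same commutative-diagram bookkeeping that matches the level-$n$ reduction of $\Psi_K$ and $\beta_K$ against $\eta_n'^{-1}$ and the Artin-symbol evaluation map (the paper records precisely the diagram you describe, with the well-definedness coming from the vanishing of the Artin symbol on $\Omega_n$), and part (b) is the standard global-to-local decomposition $[\alpha, K^{\ab}/\cK_n^*] = \prod_v [\alpha_v, K_v^{\ab}/\cK_{n,v}^*]$. Your extra remark about independence of the representative $S_{n,\vs^*}(y_n)$ modulo $V_n\cdot\cK_n^{*\times}$ is a sensible elaboration that the paper leaves implicit, but the underlying argument is the same.
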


\begin{proof}
(a) This follows immediately from the following commutative diagram:
\begin{equation*}
\begin{CD}
\ch{\ms}_{\fp}(K,T) @>{\Psi_K}>> \Hom(T^*,
\cY(\cK^*_\infty))^{\Gal(\cK^*_\infty/K)} @>{\beta_K}>>
\Hom_{O_{K,\fp^*}}(\ch{\ms}_{\fp^*}(K,T^*) , O_{K,\fp^*}) \\
@VVV    @VVV   @VVV \\
\ms_{\fp}(K,E_{\pi^n}) @>{\eta_{n}'^{-1}}>>
\dfrac{\Hom(E_{\pi^{*n}},C_n)^{\Gal(\cK_{\infty}^{*}/K)}}{\Ker(\eta_n)} @>>>
\Hom(\ms_{\fp^*}(K,E_{\pi^{*n}}), O_K/\pi^{*n}O_K)
\end{CD}
\end{equation*}
In this diagram, the second arrow on the bottom row is induced by the map
\begin{equation*}
f \mapsto (c \mapsto \{\vs^* \mapsto c([f(\vs^*), K^{\ab}/\cK_{n}^{*}]) \}
), \qquad f \in \Hom(E_{\pi^{*n}},C_n)^{\Gal(\cK_{\infty}^{*}/K)}
\end{equation*}
and is well-defined because $[z, K^{\ab}/\cK_{n}^{*}] =0$ for all
$z \in \Omega_n$. Note also that here we have canonically identified
$O_K/\pi^{*n}O_K$ with $\Hom(E_{\pi^{*n}}, E_{\pi^{*n}})$ via the map
$$
\beta \mapsto \{\vs^* \mapsto \beta \cdot \vs^*\}.
$$
\smallskip

(b) This follows from the local decomposition of the global Artin
symbol afforded via class field theory, viz. if $\alpha \in J_n$, then
$$
[\alpha, K^{\ab}/\cK_n^*] = \prod_v [\alpha_v, K_{v}^{\ab}/ \cK_{n,v}^{*}],
$$
where the product is over all finite places $v$ of $\cK_n^*$.
\end{proof}


\section{Formal Groups} \label{S:formal}

The purpose of this section is to recall a number of facts concerning
formal groups, and to establish certain conventions that we shall use
in Section \ref{S:kummer}.

We fix a minimal Weierstrass model of $E$ over $O_{K,\fp}$, and we
write $\hat{E}$ for its associated formal group. Let $\hat{\bG}_m$
denote the formal group over $O_{K,\fp}$ associated to the
multiplicative group $\bG_m$. If $x$ is a point on $\bG_m$ or on $E$,
then we write $\hat{x}$ for the corresponding value of the parameter
on $\hat{\bG}_m$ or on $\hat{E}$. We denote the formal group logarithm
associated to $\hat{E}$ by $\lambda_{\wh{E}}(Z) = Z + (\text{higher
  order terms}) \in O_{K,\fp}[[Z]]$, and we write $\log_{E,\fp}$ for
the corresponding $\fp$-adic logarithm associated to $E$. We denote
the $\fp$-adic logarithm associated to $\bG_m$ by $\log_{\fp}$.

Recall that $\cO$ denotes the completion of the ring of integers of
$\cK_{\infty,\fp}^{*}$. Since $\hat{E}$ is a height one Lubin-Tate
formal group, we may fix an isomorphism
\begin{equation*}
\eta: \hat{\bG}_m \to \hat{E}, \qquad \eta \in \cO[[Z]].
\end{equation*}
As explained in \cite[\S6]{R}, this choice of isomorphism then yields:

(a) A generator $w^* = [w_n^*]$ of $T^*$ such that for every $n \geq
1$ and $\vs \in E_{\pi^n}$, we have
\begin{equation} \label{E:torgen}
\eta(\wh{e_n(\pi^{*-n} \vs, w_n^*)}) = \hat{\vs}.
\end{equation}

(b) A $\fp$-adic period $\Omega_{\fp}:= \eta'(0) \in \cO^{\times}$
which is such that
\begin{equation*}
\Omega_{\fp}^{\sigma} = \psi^*(\sigma^{-1}) \Omega_{\fp}
\end{equation*}
for every $\sigma \in \Gal(\ov{K}/K)$.

We fix a generator $w = [w_n]$ of $T$, and for each $n \geq 0$, we set
\begin{equation*}
\zeta_n:= e_n(\pi^{*-n} w_n, w_n^*);
\end{equation*}
so from \eqref{E:torgen} above, we have that
\begin{equation*}
\eta(\hat{\zeta}_n) = \hat{w}_n.
\end{equation*}
We note that for each integer $n \geq 1$, our choice of $\eta$ induces
an isomorphism
\begin{equation*}
\chi_n: \mu_{p^n} \xrightarrow{\sim}  E_{\pi^n};\qquad \zeta_n \mapsto w_n
\end{equation*}
which is $\Gal(\ov{K}/\cK_n^*)$-equivariant; this in turn induces an
isomorphism (which we denote by the same symbol)
\begin{equation} \label{E:chin}
\chi_n: H^1(\cN_{n,\fp}, \mu_{p^n}) \xrightarrow{\sim} 
H^1(\cN_{n,\fp}, E_{\pi^n}).
\end{equation}

\begin{proposition} \label{P:compare}
Recall that for each integer $n \geq 1$, $\fm_{n,\fp}$ denotes the
maximal ideal in the completion of the ring of integers of
$\cN_{n,\fp}$. 

(a) With notation as above, the following diagram commutes:
\begin{equation*}
\begin{CD}
H^1(\cN_{n,\fp}, \mu_{p^n})  @>{\chi_n}>{\sim}> H^1(\cN_{n,\fp},
E_{\pi^n}) \\
@AAA          @AAA \\
\dfrac{\hat{\bG}_m(\fm_{n,\fp})}{\hat{\bG}_m(\fm_{n,\fp})^{p^n}}
\simeq \dfrac{O_{\cN_{n,\fp}}^{\times}}{ O_{\cN_{n,\fp}}^{\times p^n}}
@>{\eta}>{\sim}> \dfrac{\hat{E}(\fm_{n,\fp})}{\fp^n
  \hat{E}(\fm_{n,\fp})}.
\end{CD}
\end{equation*}
(Here the vertical arrows denote the natural maps afforded by Kummer
theory on $\hat{\bG}_m$ and $\hat{E}$.)

(b) If $\hat{x} \in \hat{\bG}_m(\fm_{n,\fp})$, then
\begin{equation*}
\log_{E,\fp}(\eta(\hat{x})) \equiv \Omega_{\fp} \cdot
\log_{\fp}(\hat{x}) \pmod{\fm_{n,\fp}^{p^n}}
\end{equation*}
on $\hat{\bG}_m(\fm_{n,\fp})/ \hat{\bG}_{m}(\fm_{n,\fp})^{p^n}$.
\end{proposition}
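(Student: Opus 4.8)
The plan is to exploit the fact that $\eta \in \cO[[Z]]$ is an isomorphism of formal groups with $\eta'(0) = \Omega_\fp$, together with the compatibility \eqref{E:torgen} between $\eta$ and the torsion generators, and then check that everything is consistent with the Kummer-theoretic identifications. For part (a), I would first recall that the left-hand vertical arrow is the Kummer map on $\hat\bG_m$: given $\hat x \in \hat\bG_m(\fm_{n,\fp})$ with a chosen $p^n$-th root $\hat y$ in the maximal ideal of $\cN_{n,\fp}^{\mathrm{nr}}$ (or rather in $\ov{\cN}_{n,\fp}$), one sends $\hat x$ to the cocycle $\sigma \mapsto \sigma(\hat y) \ominus_{\hat\bG_m} \hat y \in \mu_{p^n}$, after identifying $\hat\bG_m(\fm)/\hat\bG_m(\fm)^{p^n}$ with $O_{\cN_{n,\fp}}^\times/(O_{\cN_{n,\fp}}^\times)^{p^n}$ via $\hat x \mapsto 1 + \hat x$ and using that $p^n$-torsion of $\hat\bG_m$ is $\mu_{p^n}$. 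Similarly the right-hand vertical arrow sends $\hat v \in \hat E(\fm_{n,\fp})$ to $\sigma \mapsto \sigma(\hat u) \ominus_{\hat E} \hat u$, where $\hat u$ is a $\fp^n$-division point of $\hat v$. The key point is then purely formal: since $\eta$ is a formal group isomorphism, $\eta(\hat y)$ is a $\fp^n$-division point of $\eta(\hat x)$ whenever $\hat y$ is a $p^n$-division point of $\hat x$, and $\eta$ carries the Galois cocycle $\sigma(\hat y)\ominus_{\hat\bG_m}\hat y$ for $\hat x$ to the cocycle $\sigma(\eta(\hat y))\ominus_{\hat E}\eta(\hat y)$ for $\eta(\hat x)$. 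So the only thing to verify is that the induced map $\hat\bG_m[p^n] \to \hat E[\fp^n]$ agrees with $\chi_n$, i.e. that $\eta$ restricted to torsion sends $\zeta_n \mapsto w_n$ compatibly; but this is exactly the content of \eqref{E:torgen} (equivalently $\eta(\hat\zeta_n) = \hat w_n$, recorded just before the Proposition), since $\chi_n$ was defined to be the map $\zeta_n \mapsto w_n$. Chasing these identifications gives the commutativity of the square.

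For part (b), I would argue by the usual logarithm functoriality. Since $\eta : \hat\bG_m \to \hat E$ is a homomorphism of formal groups over $\cO$, we have the identity $\lambda_{\wh E}(\eta(Z)) = \eta'(0) \cdot \log_{\hat\bG_m}(Z) = \Omega_\fp \cdot \log_{\hat\bG_m}(Z)$ in $\cO[[Z]]$ (the logarithm of a formal group is characterised up to scalar by its derivative at $0$, and composing with a homomorphism multiplies this derivative; one checks $\lambda_{\wh E}\circ \eta$ and $\Omega_\fp \log_{\hat\bG_m}$ have the same derivative, hence are equal). Now $\log_{E,\fp}$ is by definition $\lambda_{\wh E}$ applied to the formal parameter (on the points reducing into $\hat E$), and $\log_\fp$ is $\log_{\hat\bG_m}$ applied to the formal parameter on $\hat\bG_m$; for $\hat x \in \hat\bG_m(\fm_{n,\fp})$ these power series converge and the identity $\lambda_{\wh E}(\eta(\hat x)) = \Omega_\fp \log_{\hat\bG_m}(\hat x)$ holds as an exact equality of elements of $\cN_{n,\fp}$, a fortiori modulo $\fm_{n,\fp}^{p^n}$. (The congruence — rather than equality — in the statement is presumably only there because one wants to view the identity on the quotient $\hat\bG_m(\fm_{n,\fp})/\hat\bG_m(\fm_{n,\fp})^{p^n}$, where one must also check that the $p^n$-th-power subgroup maps into something killed modulo $\fm_{n,\fp}^{p^n}$; this follows from the fact that $\log$ on $\hat\bG_m(\fm_{n,\fp})^{p^n}$ takes values in $p^n \cdot(\text{something integral})$, which lies in $\fm_{n,\fp}^{p^n}$ given that the ramification index of $\cN_{n,\fp}/\bQ_p$ grows with $n$ — in any case this is a routine valuation estimate, so I would only remark on it.)

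The main obstacle I anticipate is not in either computation per se but in being careful about the identifications of the Kummer maps in part (a): one has to pin down precisely how $\hat\bG_m(\fm)/\hat\bG_m(\fm)^{p^n} \simeq O^\times/(O^\times)^{p^n}$ is normalised (via $\hat x \leftrightarrow 1 + \hat x$), how the resulting Kummer cocycle lands in $\mu_{p^n}$, and — most delicately — to confirm that the map on torsion induced by the formal isomorphism $\eta$ is literally $\chi_n$ and not some twist of it. The potential for a sign or a Galois-twist discrepancy here is real, and it is exactly the kind of thing that \eqref{E:torgen} and the choice of $w^*$ described in the Formal Groups section were set up to control, so I would spend most of the written proof making that matching explicit and citing \cite[\S6]{R} for the compatibility of $\eta$ with $w^*$; everything else is formal manipulation of formal groups and their logarithms.
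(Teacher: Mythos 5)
Your proposal is correct and takes essentially the same approach as the paper, which disposes of part (a) by remarking that it follows from the definitions of $\eta$ and $\chi_n$, and defers part (b) to the proof of Corollary 9.2 of Rubin's paper \cite{R}. What you have done is simply unwind those definitions and reconstruct Rubin's logarithm computation, so your sketch is a correct (and welcome) expansion of the paper's proof rather than a different route: the key points — that the formal-group isomorphism $\eta$ intertwines the Kummer cocycles $\sigma\mapsto\sigma(\hat y)\ominus\hat y$, that \eqref{E:torgen} pins the induced torsion map down as exactly $\chi_n$ rather than a twist of it, and that $\lambda_{\wh E}\circ\eta$ and $\Omega_\fp\log_{\hat\bG_m}$ are both logarithms of $\hat\bG_m$ with the same derivative at $0$, hence coincide as power series — are precisely the ones the paper's two citations encapsulate.
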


\begin{proof} (a) This follows directly from the definitions of $\eta$ and
  $\chi_n$. 

(b) See the proof of \cite[Corollary 9.2]{R}. 
\end{proof}


\section{Kummer pairings and $p$-adic heights} \label{S:kummer}

In this section we shall compute the value of the $p$-adic height
pairing $[\,,\,]_{K,\fp}$ on elements of restricted Selmer groups that
are constructed via Proposition \ref{P:kummerinj} (see Theorem
\ref{T:genht} below). This is accomplished by using Proposition
\ref{P:height} to express these values in terms of certain Kummer
pairings, and then applying Wiles's explicit reciprocity law.

\begin{definition} \label{D:kummer}
For each integer $n \geq 1$, we define a pairing
\begin{equation} \label{E:pairing1}
(\,,\,)_{\fp, E_{\pi^{*n}}}: U_{n,\fp}^{*} \times
    H^1(K_\fp,E_{\pi^{*n}}) \to E_{\pi^{*n}}
\end{equation}
by
\begin{equation*}
(u_n,c^*_n)_{\fp, E_{\pi^{*n}}} = c^*_n([u_n,
    K_{\fp}^{\ab}/\cK_{n,\fp}^{*}]) 
=: \alpha_{\fp,E_{\pi^{*n}}}(u_n,c^*_n) \cdot w_n^*,
\end{equation*}
with $\alpha_{\fp,E_{\pi^{*n}}}(u_n,c^*_n) \in O_K/\fp^{*n}O_K \simeq
\bZ_p/p^n \bZ_p$.

The pairings \eqref{E:pairing1} give a pairing
\begin{equation} \label{E:pairing2}
(\,,\,)_{\fp, T^*}: U_{\infty,\fp}^{*} \times
    H^1(K_\fp,T^*) \to T^*;\quad (u,c^*) \mapsto
    \alpha_{\fp,T^*}(u,c^*) \cdot w^*
\end{equation}
that is defined as follows. Suppose that $u =[u_n] \in
U_{\infty,\fp}^{*}$ and 
$$ 
c^* = [c^*_n] \in \varprojlim
H^1(K_\fp,E_{\pi^{*n}}) \simeq H^1(K_\fp, T^*).
$$ 
Then we set
\begin{equation*}
(u,c^*)_{\fp, T^*} = [(u_n,c^*_n)_{\fp, E_{\pi^{*n}}}] = [\alpha_{\fp,
E_{\pi^{*n}}} (u_n,c_n^*)] \cdot w^* =: \alpha_{\fp,T^*}(u,c^*) \cdot w^*. 
\end{equation*}
\qed
\end{definition}

The pairing \eqref{E:pairing2} is related to the $p$-adic height
pairing $[\, ,\,]_{K,\fp^*}$ in the following way. Recall from
Proposition \ref{P:kummerinj} above that there is a natural injection
$$
\rho: \Hom(T^*,(U^{*}_{\infty,\fp} \otimes
\bQ)/\ov{\cE}^*_\infty)^{\Gal(\cK_\infty^*/K)} \hookrightarrow
\ch{\ms}_{\fp}(K,T).
$$

\begin{proposition} \label{P:kummerht}
Suppose that 
\begin{equation*}
\xi \in
\Hom(T^*,U^{*}_{\infty,\fp}/\ov{\cE}^*_\infty)^{\Gal(\cK_\infty^*/K)}.
\end{equation*}
Let $\ov{\xi(w^*)}\in U_{\infty,\fp}^{*}$ denote any lift of
$\xi(w^*)$, and suppose that $y^* \in \ch{\ms}_{\fp^*}(K,T^*)$. Then
we have
$$
[\rho(\xi),y^*]_{K,\fp^*} \cdot w^* = (\ov{\xi(w^*)},
\loc_{\fp}(y^*))_{\fp, T^*},
$$
and so it follows that
$$ 
[\rho(\xi),y^*]_{K,\fp^*} =
[\alpha_{\fp,E_{\pi^{*n}}}((\ov{\xi(w^*)})_n,
\loc_{\fp}(y_n^*))] = \alpha_{\fp,T^*}(\ov{\xi(w^*)},\loc_{\fp}(y^*)).
$$
\end{proposition}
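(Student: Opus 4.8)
The plan is to unwind both sides of the asserted identity through the explicit constructions recalled in Sections \ref{S:selmer} and \ref{S:height}, and to check that they produce the same local Artin symbol computation. Concretely, let $\xi$ be as in the statement and write $y = \rho(\xi) \in \ch{\ms}_\fp(K,T)$. First I would trace $y$ through the isomorphism $\Psi_K$ of \eqref{E:keyiso}: by the definition \eqref{E:rho} of $\rho$, the component $y_n \in \ms_\fp(K,E_{\pi^n})$ is (up to the harmless factor $(p-1)(\pi^*)^n$, which is a unit times a power of $\pi^*$ and so only rescales identifications that are already $O_{K,\fp^*}$-linear) the class $r_n^{-1}(f_n)$, where $f_n \in \Hom(E_{\pi^{*n}},\cE_n^*/\cE_n^{*p^n})^{\Gal(\cK_\infty^*/K)}$ is obtained from $\xi$ by projecting to level $n$ and raising to the $p^n$-th power. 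The point is that under the identification of $\ms_\fp(K,E_{\pi^n})$ with $\Hom(E_{\pi^{*n}},C_n)^{\Gal}/\Ker(\eta_n)$ coming from Corollary \ref{C:kummer3} and \cite[Proposition 4.6]{A1}, a class coming from a \emph{global} unit $\xi(w^*)$ is represented by the idele $\eta_n^{-1}(y_n)(\vs^*)$ whose component at $\fp$ is a $p^n$-th root of the local image of $\xi(w^*)_n$, and which is trivial (or a unit) everywhere else by the very definition of $V_n$ and $C_n$. Hence $q_n(y)(\vs^*)$, the image of $\Psi_K(y)$ in $\Hom(E_{\pi^{*n}},C_n)^{\Gal}$, is represented by an idele supported at $\fp$, with $\fp$-component $\overline{\xi(w^*)}_n$.

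Next I would invoke Proposition \ref{P:height}(a): $[y,y^*]_{K,\fp^*}\cdot \vs^* = y_n^*([q_n(y)(\vs^*),K^{\ab}/\cK_n^*])$. Combining this with the local decomposition \eqref{E:locdec} of Proposition \ref{P:height}(b) — or more directly with the local factorisation of the global Artin symbol — and using that $q_n(y)(\vs^*)$ is supported at the unique place of $\cK_n^*$ above $\fp$ (here one uses that $\cK_n^*/K$ is totally ramified at $\fp$, since $\fp$ is the prime associated to the complementary character $\psi$ acting on $E_{\pi^{*\infty}}$; if there is more than one place above $\fp$ the argument is the same with a sum over those places), I get
\begin{equation*}
[y,y^*]_{K,\fp^*}\cdot \vs^* = y_n^*\bigl([\,\overline{\xi(w^*)}_n,\, K_\fp^{\ab}/\cK_{n,\fp}^*\,]\bigr).
\end{equation*}
Taking $\vs^* = w_n^*$ and comparing with Definition \ref{D:kummer}, the right-hand side is exactly $(\overline{\xi(w^*)}_n, \loc_\fp(y_n^*))_{\fp,E_{\pi^{*n}}} = \alpha_{\fp,E_{\pi^{*n}}}(\overline{\xi(w^*)}_n,\loc_\fp(y_n^*))\cdot w_n^*$. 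Passing to the inverse limit over $n$ via the compatibility of the pairings \eqref{E:pairing1} that defines \eqref{E:pairing2} yields $[\rho(\xi),y^*]_{K,\fp^*}\cdot w^* = (\overline{\xi(w^*)},\loc_\fp(y^*))_{\fp,T^*}$, which is the first displayed identity, and the second is then just the definition of $\alpha_{\fp,T^*}$ together with the fact that the value is independent of the choice of lift $\overline{\xi(w^*)}$ because the Artin symbol of a $p^n$-th power acts trivially on $E_{\pi^{*n}}$ (equivalently, on $\mu_{p^n}$), i.e.\ precisely the well-definedness already noted in the proof of Proposition \ref{P:height}(a).

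The main obstacle I anticipate is the bookkeeping in the first paragraph: correctly matching the normalisation factor $(p-1)(\pi^*)^n$ appearing in \eqref{E:rho} against the identification $O_K/\pi^{*n}O_K \simeq \Hom(E_{\pi^{*n}},E_{\pi^{*n}})$ used in Proposition \ref{P:height}, and verifying that under $\eta_n^{-1}$ the class $r_n^{-1}(f_n)$ really is represented by an idele whose only nontrivial component lies above $\fp$ and equals a $p^n$-th root of $\xi(w^*)_n$ — this requires carefully retracing the construction of the exact sequence in \cite[Proposition 4.6]{A1} and the isomorphism \eqref{E:mystiso}, exactly as in the proof of \cite[Proposition 2.4]{R}. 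Once the idele representative is pinned down, everything else is a formal consequence of class field theory and the definitions. I would therefore organise the write-up so that this identification is isolated as the one substantive step, citing \cite[Proposition 4.6]{A1} and \cite[Lemme 3.8, Lemme 3.11, Lemme 3.16]{PR1}, with the rest presented as a short chain of equalities.
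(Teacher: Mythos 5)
Your proposal is correct and takes essentially the same approach as the paper, which disposes of this proposition in one line: ``This follows directly from Proposition \ref{P:height} and Definition \ref{D:kummer}.'' Your write-up simply expands the chain of identifications (through $\Psi_K$, $q_n$, the idele representative, and the local Artin symbol at $\fp$) that the paper leaves implicit, and correctly isolates the bookkeeping of the normalisation factor $(p-1)(\pi^*)^n$ and the idele representative as the only substantive step.
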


\begin{proof}
This follows directly from Proposition \ref{P:height} and Definition
\ref{D:kummer}.
\end{proof}

Our goal in this section is to evaluate $[\rho(\xi),y^*]_{K,\fp^*}$
(see Theorem \ref{T:genht} below). The difficulty in doing this arises
from the fact that that the pairing \eqref{E:pairing2} cannot be
directly evaluated using explicit reciprocity laws. In order to
overcome this obstacle, we shall first express the Kummer pairings
\eqref{E:pairing1} and \eqref{E:pairing2} in terms of certain Hilbert
symbols. This will then enable us to relate these pairings to two
other pairings (namely \eqref{E:nk1} and \eqref{E:nk2} below) that can
be evaluated using Wiles's explicit reciprocity law for formal groups.

\begin{definition} \label{D:hilbert}
We define the Hilbert pairing 
\begin{equation} \label{E:hilb1}
(\,,\,)_{\fp,\mu_{p^n}}:
  \frac{\cN_{n,\fp}^{\times}}{{\cN_{n,\fp}^{\times p^n}}} \times
  \frac{\cN_{n,\fp}^{\times}}{{\cN_{n,\fp}^{\times p^n}}} \to
  \mu_{p^n}
\end{equation}
by
\begin{equation*}
(a,b)_{\fp,\mu_{p^n}} = \frac{(b^{1/p^n})^{\sigma_a}}{b^{1/p^n}},
\end{equation*}
where $\sigma_a$ denotes the local Artin symbol
$[a,K_{\fp}^{\ab}/\cN_{n,\fp}]$ and $b^{1/p^n}$ is any $p^n$-th root of
$b$ in $K_{\fp}^{\ab}$.

We remark that it is a standard property of the Hilbert pairing (see
e.g. \cite[Chapter XIV, \S2]{S1}) that
\begin{equation} \label{E:skewhilb}
(a,b)_{\fp,\mu_{p^n}} = (b,a)^{-1}_{\fp,\mu_{p^n}}.
\end{equation}
\qed
\end{definition}

\begin{lemma} \label{L:kumhilb}
Suppose that $u \in U_{\infty,\fp}^{*}$ and $c^* \in
H^1(K_{\fp},T^*)$. Then
\begin{equation*}
(u_n, r_n^*(c_n^*)(\pi^{*-n}w_n))_{\fp,\mu_{p^n}} =
  \alpha_{\fp,E_{\pi^{*n}}}(u_n,c^*_n) 
\cdot \zeta_n.
\end{equation*}

\end{lemma}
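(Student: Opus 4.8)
The plan is to unwind both sides of the claimed identity to the Artin symbol level and compare them directly, using the compatibility between the Weil pairing, the isomorphism $\chi_n$, and Kummer theory that has been set up in Sections~\ref{S:selmer} and~\ref{S:formal}. First I would recall that by Definition~\ref{D:kummer}, the element $\alpha_{\fp,E_{\pi^{*n}}}(u_n,c_n^*) \in O_K/\fp^{*n}O_K$ is defined by $c_n^*([u_n, K_\fp^{\ab}/\cK_{n,\fp}^*]) = \alpha_{\fp,E_{\pi^{*n}}}(u_n,c_n^*) \cdot w_n^*$, where we view $c_n^* \in H^1(K_\fp, E_{\pi^{*n}})$. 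Under the isomorphism $r_n^*$ of Corollary~\ref{C:riso}, applied locally, $c_n^*$ corresponds to a homomorphism $\tilde{c}_n^* = r_n^*(c_n^*) \colon E_{\pi^n} \to \cK_{n,\fp}^{\times}/\cK_{n,\fp}^{\times p^n}$; recall from the construction in Lemma~\ref{L:kummer1} (with the roles of $\fp$ and $\fp^*$ interchanged, per Remark~\ref{R:switch}) that this homomorphism is characterised by the cocycle formula $\tilde{c}_n^*(\vs)(\tau) = e_n(\vs, c_n^*(\tau))$ for $\vs \in E_{\pi^n}$ and $\tau$ in the appropriate Galois group, once we identify $\cK_{n,\fp}^{\times}/\cK_{n,\fp}^{\times p^n}$ with $H^1(\cK_{n,\fp},\mu_{p^n})$ via Kummer theory.

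The heart of the computation is then to expand the left-hand side $(u_n, r_n^*(c_n^*)(\pi^{*-n}w_n))_{\fp,\mu_{p^n}}$ using Definition~\ref{D:hilbert}: writing $b = r_n^*(c_n^*)(\pi^{*-n}w_n) \in \cN_{n,\fp}^{\times}/\cN_{n,\fp}^{\times p^n}$ and $\sigma_{u_n} = [u_n, K_\fp^{\ab}/\cN_{n,\fp}]$, we have $(u_n,b)_{\fp,\mu_{p^n}} = (b^{1/p^n})^{\sigma_{u_n}}/b^{1/p^n}$. Now $b^{1/p^n}$, as a $p^n$-th root, represents (via Kummer theory) the class $b$ viewed in $H^1(\cN_{n,\fp},\mu_{p^n})$; the action of $\sigma_{u_n}$ on it produces the value of the corresponding cocycle at $\sigma_{u_n}$. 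By the cocycle description of $r_n^*$ recalled above, evaluating at $\vs = \pi^{*-n}w_n$ gives $e_n(\pi^{*-n}w_n, c_n^*(\sigma_{u_n}))$. Here one must be careful: the element $c_n^*(\sigma_{u_n})$ is precisely $\alpha_{\fp,E_{\pi^{*n}}}(u_n,c_n^*) \cdot w_n^*$ by the defining property of $\alpha$, since $[u_n, K_\fp^{\ab}/\cN_{n,\fp}]$ restricts to $[u_n, K_\fp^{\ab}/\cK_{n,\fp}^*]$ on the relevant quotient (using that $\cK_n^* \subseteq \cN_n$ and $c_n^*$ factors through $\Gal(K_\fp^{\ab}/\cK_{n,\fp}^*)$). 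Substituting, the Hilbert symbol equals $e_n(\pi^{*-n}w_n,\, \alpha_{\fp,E_{\pi^{*n}}}(u_n,c_n^*)\cdot w_n^*) = e_n(\pi^{*-n}w_n,w_n^*)^{\alpha_{\fp,E_{\pi^{*n}}}(u_n,c_n^*)}$, and by the definition $\zeta_n := e_n(\pi^{*-n}w_n,w_n^*)$ from Section~\ref{S:formal}, this is exactly $\alpha_{\fp,E_{\pi^{*n}}}(u_n,c_n^*) \cdot \zeta_n$ (written multiplicatively in $\mu_{p^n}$, additively in the exponent), which is the claim.

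The main obstacle I anticipate is bookkeeping with the various identifications and normalisations: one must track carefully (i) the distinction between $c_n^*$ as a cohomology class on $K_\fp$ versus its restriction to $\cK_{n,\fp}^*$ and to $\cN_{n,\fp}$, using Lemma~\ref{L:resinj} to move between them; (ii) the precise normalisation of the Weil pairing $e_n$ fixed in the Notation section and its compatibility identities $e_{n+1}(\vs_n,\vs_{n+1}^*) = e_n(\vs_n,\pi^*\vs_{n+1}^*)$, which are what make $\pi^{*-n}w_n$ a well-defined input up to the required precision; and (iii) the sign/inverse conventions in the Artin reciprocity map and in the Hilbert symbol (cf. \eqref{E:skewhilb}), to be sure no spurious inversion creeps in. Once these are pinned down, the identity is essentially the statement that the Hilbert symbol, the Kummer-theoretic description of $r_n^*$, and the Weil pairing normalisation have all been chosen compatibly; concretely I would reduce everything to checking the case $c_n^*$ ranging over a set of classes whose images $r_n^*(c_n^*)$ generate, and verify the cocycle identity on Artin symbols directly, invoking the local statement of Lemma~\ref{L:kummer1} and the functoriality of Kummer theory under $\chi_n$ from \eqref{E:chin}.
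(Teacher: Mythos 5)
Your proposal is correct and follows essentially the same route as the paper's proof: unwind the Hilbert symbol to the value of the underlying cocycle at the Artin symbol $[u_n, K_{\fp}^{\ab}/\cN_{n,\fp}]$, invoke the Weil-pairing description of $r_n^*$ from Lemma~\ref{L:kummer1} to rewrite this as $e_n(\pi^{*-n}w_n, c_n^*([u_n,K_{\fp}^{\ab}/\cK_{n,\fp}^*]))$ (using that $u_n \in \cK_{n,\fp}^*$), and then substitute the defining property of $\alpha_{\fp,E_{\pi^{*n}}}$ together with the normalisation $\zeta_n = e_n(\pi^{*-n}w_n, w_n^*)$. The bookkeeping concerns you flag are exactly the points the paper dispatches in its short chain of equalities, so nothing further is needed.
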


\begin{proof} It follows from the definition of $r_n^*$ (see
 Corollary \ref{C:riso}) that 
\begin{align*}
(u_n, r_n^*(c_n^*)(\pi^{*-n}w_n))_{\fp,\mu_{p^n}}&= e_n( \pi^{*-n}
w_n, c^*_n([u_n,K_{\fp}^{\ab}/\cN_{n,\fp}])) \\ 
&= e_n(\pi^{*-n} w_n, c^*_n([u_n,K_{\fp}^{\ab}/\cK_{n,\fp}^{*}]))\quad
\text{(since $u_n \in \cK_{n,\fp}^{*}$)} \\ 
&= e_n(\pi^{*-n} w_n, \alpha_{\fp,E_{\pi^{*n}}}(u_n,c^*_n) \cdot w_n^*) \\ 
&= \alpha_{\fp,E_{\pi^{*n}}}(u_n,c^*_n) \cdot e_n(w_n, \pi^{-n} w_n^*) \\
&= \alpha_{\fp,E_{\pi^{*n}}}(u_n,c^*_n) \cdot \zeta_n,
\end{align*}
as claimed.
\end{proof}

\begin{definition} \label{D:normkum}
For each integer $n \geq 1$, we define a pairing
\begin{equation} \label{E:nk1}
(\,,\,)_{\fp,E_{\pi^n}}: \cN_{n,\fp}^{\times} \times
  H^1(\cN_{n,\fp},E_{\pi^n}) \to E_{\pi^n}
\end{equation}
by
\begin{equation*}
(\nu, y)_{\fp,E_{\pi^n}} 
= y([\nu, K_{\fp}^{\ab}/\cN_{n,\fp}]). 
\end{equation*}

It is not hard to check (using Lemma \ref{L:resinj}) that the pairings
$(\,,\,)_{\fp,E_{\pi^n}}$ combine to yield a pairing
\begin{equation} \label{E:nk2}
(\,,\,)_{\fp,T}: U_{\infty,\fp} \times H^1(K_{\fp}, T) \to T;\quad
  (\beta,c) \mapsto \alpha_{\fp,T}(\beta,c) \cdot w
\end{equation}
that is defined as follows. Suppose that $\beta = [\beta_n] \in
U_{\infty,\fp}$, and that
\begin{equation*}
c = [c_n] \in \varprojlim H^1(K_{\fp}, E_{\pi^n}) \simeq H^1(K_{\fp},T).
\end{equation*}
Then we set
\begin{equation*}
(\beta,c)_{\fp,T} = [(\beta_n,c_n)_{\fp, E_{\pi^n}}] =
  [\alpha_{\fp,E_{\pi^n}}(\beta_n,c_n)] \cdot w =: \alpha_{\fp,T}(\beta,c)
  \cdot w.  \qed
\end{equation*}
\end{definition}

The advantage of the pairings \eqref{E:nk1} and \eqref{E:nk2} is that
they can be evaluated using explicit reciprocity laws. The following
result gives the relationship between the pairings \eqref{E:pairing1},
\eqref{E:hilb1} and \eqref{E:nk1}.

\begin{lemma} \label{L:htkum}
(a) Suppose that $c \in H^1(K_{\fp},T)$, and that $u \in
  U_{\infty,\fp}$. Then
\begin{equation*}
(u_n, r_n(c_n)(\pi^{-n} w_n^*))_{\fp,\mu_{p^n}} =
  \alpha_{\fp,E_{\pi^n}}(u_n,c_n) \cdot \zeta_n.
\end{equation*}

(b) Suppose that $u \in U_{\infty,\fp}^{*}$ and $c^* \in
H^1(K_{\fp},T^*)$. Then
\begin{equation*}
(u_n, \chi_n(r_n^*(c_n^*(\pi^{*-n} w_n))))_{\fp,E_{\pi^n}} =
-(r_n^*(c_n^*)(\pi^{*-n} w_n), \chi_n(u_n))_{\fp,E_{\pi^n}}=
\alpha_{\fp,E_{\pi^{*n}}}(u_n,c_n) \cdot w_n.
\end{equation*}
(Recall that the isomorphism $\chi_n$ is defined in \eqref{E:chin}.)
\end{lemma}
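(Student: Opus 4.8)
The plan is to prove both parts by unwinding the definitions of the various pairings and then using the compatibility of the Weil pairing with Kummer theory, together with the skew-symmetry \eqref{E:skewhilb} of the Hilbert symbol. For part (a), I would proceed exactly as in the proof of Lemma \ref{L:kumhilb}, with the roles of $\psi$ and $\psi^*$ (equivalently, of $\fp$ and $\fp^*$, in the sense of Remark \ref{R:switch}) interchanged. That is, starting from the definition of $(\,,\,)_{\fp,\mu_{p^n}}$ in Definition \ref{D:hilbert} and the description of $r_n$ in Corollary \ref{C:riso} (the cocycle $\sigma \mapsto e_n(c_n(\sigma), \vs^*)$ under the isomorphism of Lemma \ref{L:kummer1}), I would compute
$$
(u_n, r_n(c_n)(\pi^{-n} w_n^*))_{\fp,\mu_{p^n}} = e_n\bigl(c_n([u_n, K_\fp^{\ab}/\cN_{n,\fp}]), \pi^{-n} w_n^*\bigr),
$$
then replace the Artin symbol over $\cN_{n,\fp}$ by the one over $\cK_{n,\fp}$ (legitimate since $u_n \in \cK_{n,\fp}$, noting the relevant restriction compatibility from Lemma \ref{L:resinj}), apply the definition $(u_n,c_n)_{\fp,E_{\pi^n}} = \alpha_{\fp,E_{\pi^n}}(u_n,c_n)\cdot w_n$ analogous to Definition \ref{D:kummer}, and use bilinearity of $e_n$ together with $e_n(w_n, \pi^{-n} w_n^*) = e_n(\pi^{*-n} w_n, w_n^*) = \zeta_n$ (which follows from the normalisation identities for the Weil pairing recorded in the Notation section, since $e_n(\pi^*\vs_n,\vs_n^*) = e_n(\vs_n,\pi\vs_n^*)$). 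This gives the claimed identity.

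For part (b), the first equality is simply the skew-symmetry of the pairing $(\,,\,)_{\fp,E_{\pi^n}}$ transported through $\chi_n$: since $\chi_n$ is induced by the $\Gal(\ov{K}/\cK_n^*)$-equivariant isomorphism $\mu_{p^n} \xrightarrow{\sim} E_{\pi^n}$ sending $\zeta_n \mapsto w_n$, and the Hilbert pairing is skew-symmetric by \eqref{E:skewhilb}, the pairing $(a,b) \mapsto \chi_n^{-1}\bigl((\chi_n a, \chi_n b)_{\fp,E_{\pi^n}}\bigr)$ on $\cN_{n,\fp}^\times/\cN_{n,\fp}^{\times p^n}$ agrees up to sign with $(\,,\,)_{\fp,\mu_{p^n}}$, so that interchanging the two arguments of $(\,,\,)_{\fp,E_{\pi^n}}$ introduces exactly the sign $-1$. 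For the second equality, I would combine Lemma \ref{L:kumhilb} with Proposition \ref{P:compare}(a): by Lemma \ref{L:kumhilb},
$$
(u_n, r_n^*(c_n^*)(\pi^{*-n} w_n))_{\fp,\mu_{p^n}} = \alpha_{\fp,E_{\pi^{*n}}}(u_n,c_n^*)\cdot \zeta_n,
$$
and applying $\chi_n$ to both sides — which sends $\zeta_n \mapsto w_n$ and intertwines the Hilbert pairing on $\mu_{p^n}$ with the pairing $(\,,\,)_{\fp,E_{\pi^n}}$ composed with $\chi_n$ on the second argument, by the commutative diagram of Proposition \ref{P:compare}(a) together with the definition of $\chi_n$ on cohomology in \eqref{E:chin} — yields
$$
(u_n, \chi_n(r_n^*(c_n^*)(\pi^{*-n} w_n)))_{\fp,E_{\pi^n}} = \alpha_{\fp,E_{\pi^{*n}}}(u_n,c_n^*)\cdot w_n,
$$
as desired.

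The main obstacle I anticipate is bookkeeping rather than conceptual: one must track carefully how $\chi_n$ interacts with the Hilbert symbol versus the Artin-symbol pairing $(\,,\,)_{\fp,E_{\pi^n}}$, since the former is visibly skew-symmetric while the latter is defined asymmetrically (a unit paired against a cohomology class). The point is that under $\chi_n$, the relation $(a,b)_{\fp,\mu_{p^n}} = \chi_n^{-1}(\,\chi_n(a)\text{-component}\,)$ is not literally an identity of pairings with the same source and target, so I would need to state precisely the passage from $\cN_{n,\fp}^\times/\cN_{n,\fp}^{\times p^n} \simeq H^1(\cN_{n,\fp},\mu_{p^n})$ to $H^1(\cN_{n,\fp},E_{\pi^n})$ on the relevant argument, and verify that the resulting sign is $-1$ and not $+1$. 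Once the diagram of Proposition \ref{P:compare}(a) is in hand, however, this is a routine diagram chase, and the Weil-pairing normalisations in the Notation section supply every identity needed to pin down the constants.
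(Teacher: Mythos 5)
Your proposal is correct and follows essentially the same approach as the paper, which disposes of this lemma in one line: part (a) is proved exactly as Lemma \ref{L:kumhilb} (with the roles of $\fp$ and $\fp^*$ interchanged), and part (b) is an immediate consequence of Lemma \ref{L:kumhilb} via the isomorphism $\chi_n$ and the skew-symmetry \eqref{E:skewhilb}; your expansion reconstructs this accurately. One small remark: Proposition \ref{P:compare}(a) is not actually needed for the second equality in (b) --- the identity $(\nu,\chi_n(b))_{\fp,E_{\pi^n}} = \chi_n\bigl((\nu,b)_{\fp,\mu_{p^n}}\bigr)$ follows directly from the fact that $\chi_n$ in \eqref{E:chin} is induced by a coefficient isomorphism --- and in part (a) the quantity $\alpha_{\fp,E_{\pi^n}}$ of Definition \ref{D:normkum} is already defined using the Artin symbol over $\cN_{n,\fp}$, so the Artin-symbol replacement you describe there is superfluous (it is needed only in the $\fp^*$ version, Lemma \ref{L:kumhilb}, because Definition \ref{D:kummer} uses the Artin symbol over $\cK_{n,\fp}^*$).
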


\begin{proof} 
Part (a) may be proved in exactly the same way as Lemma
\ref{L:kumhilb}, while Part (b) is an immediate consequence of 
the same lemma.
\end{proof}

In order to evaluate the pairings in Lemma \ref{L:htkum}(b) above
using explicit reciprocity laws (and thereby also compute the value
of the $[\rho(\xi),y^*]_{K,\fp^*}$ in Proposition \ref{P:kummerht} above), we
shall require the following result (cf. Corollary \ref{C:riso}).

\begin{lemma} \label{L:kumhom}
There are injective homomorphisms
\begin{align} \label{E:kumhom1}
&\kappa: H^1(K,T) \rightarrow \varprojlim H^1(\cK_n^*, \bZ_p(1))  \\ 
&\kappa^*: H^1(K,T^*) \rightarrow \varprojlim H^1(\cK_n, \bZ_p(1)),
\end{align}
where the inverse limits are taken with respect to the obvious
corestriction maps. A similar result holds if $K$ is replaced by
$K_{\fp}$ or $K_{\fp^*}$.
\end{lemma}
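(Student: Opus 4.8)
The plan is to build $\kappa$ by combining Perrin-Riou's Kummer isomorphism (Lemma~\ref{L:kummer1}, already used in Corollary~\ref{C:riso}) with evaluation at the chosen generator $w^{*}=[w_{n}^{*}]$ of $T^{*}$. Recall that $H^{1}(K,T)=\varprojlim_{n}H^{1}(K,E_{\pi^{n}})$. Given $c=[c_{n}]\in H^{1}(K,T)$, I would pass to $r_{n}(c_{n})\in\Hom(E_{\pi^{*n}},\cK_{n}^{*\times}/\cK_{n}^{*\times p^{n}})^{\Gal(\cK_{n}^{*}/K)}$, identify $\cK_{n}^{*\times}/\cK_{n}^{*\times p^{n}}$ with $H^{1}(\cK_{n}^{*},\mu_{p^{n}})$ by Kummer theory, and set $\kappa_{n}(c_{n}):=r_{n}(c_{n})(w_{n}^{*})\in H^{1}(\cK_{n}^{*},\mu_{p^{n}})$; by the explicit formula for \eqref{E:mystiso}, this is the class of the cocycle $\sigma\mapsto e_{n}(c_{n}(\sigma),w_{n}^{*})$ on $\Gal(\ov{K}/\cK_{n}^{*})$. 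One then sets $\kappa(c):=[\kappa_{n}(c_{n})]_{n}$, and the first thing to verify is that this sequence genuinely defines an element of $\varprojlim_{n}H^{1}(\cK_{n}^{*},\bZ_{p}(1))$.

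That verification is the crux. Two compatibilities are needed. First, as $n$ varies, the classes $\kappa_{n}(c_{n})$ must be compatible under corestriction $H^{1}(\cK_{n+1}^{*},-)\to H^{1}(\cK_{n}^{*},-)$; this should follow from the norm-coherence of $[c_{n}]$, the fact that the isomorphisms of Lemma~\ref{L:kummer1} are $\Gal(\cK_{n}^{*}/K)$-equivariant and compatible with change of level, the Weil-pairing identity $e_{n+1}(\vs_{n},\vs_{n+1}^{*})=e_{n}(\vs_{n},\pi^{*}\vs_{n+1}^{*})$, and the relation $\pi^{*}w_{n+1}^{*}=w_{n}^{*}$. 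Second, the classes must be compatible with the projections $\mu_{p^{n+1}}\to\mu_{p^{n}}$, so that the corestriction-coherent system they form promotes to genuine $\bZ_{p}(1)$-coefficient classes; this again comes down to the same Weil-pairing identities. This is exactly the analogue, for the present map, of the norm-compatibility proved in \cite[Lemma 3.16]{PR1} and exploited in Proposition~\ref{P:kummerinj}, and — just as in formula \eqref{E:rho} — I expect that a normalising factor on the $E_{\pi^{n}}$-coefficients (such as the $(p-1)(\pi^{*})^{n}$ occurring there, which is a unit on $\pi$-power torsion) may have to be inserted into the definition of $\kappa_{n}$ so that the transition maps match up on the nose. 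This largely mechanical bookkeeping is the main obstacle; the rest is formal.

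Granting the compatibility, $\kappa$ is $\bZ_{p}$-linear by construction, and injectivity is checked level by level: restriction $H^{1}(K,E_{\pi^{n}})\to H^{1}(\cK_{n}^{*},E_{\pi^{n}})$ is injective by Lemma~\ref{L:resinj} (with $K\subseteq\cK_{n}^{*}\subseteq\cN_{n}$), the map of Lemma~\ref{L:kummer1} is an isomorphism, and evaluation at $w_{n}^{*}$ is injective since $w_{n}^{*}$ generates $E_{\pi^{*n}}$ as a cyclic $O_{K}/\pi^{*n}$-module; hence each $\kappa_{n}$ is injective, and passing to the inverse limit preserves this. The homomorphism $\kappa^{*}$ would be constructed in the mirror-image fashion, interchanging the roles of $\fp$ and $\fp^{*}$ (Remark~\ref{R:switch}) and using $r_{n}^{*}$ together with the generator $w=[w_{n}]$ of $T$. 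Finally, the local assertions, with $K$ replaced by $K_{\fp}$ or $K_{\fp^{*}}$, follow by the identical argument, substituting the local parts of Lemma~\ref{L:kummer1} and Lemma~\ref{L:resinj} for their global counterparts.
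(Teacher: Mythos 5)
Your proposal follows the paper's route: construct $\kappa$ (resp.\ $\kappa^*$) by composing the Perrin-Riou isomorphism of Corollary~\ref{C:riso} with evaluation at a generator of the opposite Tate module, check compatibility under the maps $H^1(\cK_{n+1}^{*},\mu_{p^{n+1}}) \to H^1(\cK_{n}^{*},\mu_{p^{n+1}}) \to H^1(\cK_n^{*},\mu_{p^n})$ (corestriction followed by reduction of coefficients), invoke the standard identification $\varprojlim H^1(-,\mu_{p^n}) \simeq \varprojlim H^1(-,\bZ_p(1))$, and deduce injectivity level by level from Lemmas~\ref{L:resinj} and~\ref{L:kummer1}. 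This is exactly the paper's argument.

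The one place you diverge, however, is where the substance lies. The paper does not evaluate $r_n^*(c_n^*)$ at $w_n$ but at $\pi^{*-n}w_n$ (for $\kappa^*$; correspondingly $r_n(c_n)$ is evaluated at $\pi^{-n}w_n^*$ for $\kappa$), and this $n$-dependent twist by the unit $\pi^{*-n}\in (O_K/\fp^{*n})^{\times}$ is precisely what makes the sequence $\bigl[\{r_n^*(c_n^*)\}(\pi^{*-n}w_n)\bigr]_n$ coherent under the transition maps \eqref{E:redmap}. Your formulation $\kappa_n(c_n):=r_n(c_n)(w_n^*)$, as written, is not compatible across levels: applying the relation $\pi^*w_{n+1}^*=w_n^*$ and the Weil-pairing identities introduces an unwanted unit factor at each step that does not cancel unless you build the $\pi^{-n}$ into the definition. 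You did flag that ``a normalising factor \ldots may have to be inserted,'' pointing to the $(p-1)(\pi^*)^n$ in \eqref{E:rho}; but that factor acts on $E_{\pi^n}$-coefficient classes (where $\pi^*$ is a unit) and serves a different purpose, and in any case you do not determine the factor actually needed here. Since establishing this coherence is the only non-formal step in the proof, leaving it as unresolved bookkeeping means the definition of $\kappa_n$ you give does not quite yield an element of the inverse limit. The fix is simply to replace $w_n^*$ by $\pi^{-n}w_n^*$ (and the compatibility statement \eqref{E:kappacong} in the paper records precisely this choice).

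A small additional remark: the paper's proof is written with $K_\fp$ rather than $K$ and with the inverse limit over $H^1(\cK_{n,\fp},-)$; your statement that the local cases ``follow by the identical argument'' is correct, but the careful version of the argument is the local one, and the global case uses Lemma~\ref{L:resinj} for $K\subseteq\cK_n^*\subseteq\cN_n$ in the same way you describe.
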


\begin{proof}
We shall just explain the construction of $\kappa^*$, since
that of $\kappa$ is very similar.

Recall that $w=[w_n]$ denotes a generator of $T$. Suppose that
$$
c^* = [c^*_n] \in \varprojlim H^1(K_\fp, E_{\pi^{*n}}) \simeq
H^1(K_\fp,T^*),
$$
and consider the composition of maps
\begin{equation} \label{E:redmap}
H^1(\cK_{n+1,\fp}, \mu_{p^{n+1}}) \to H^1(\cK_{n,\fp}, \mu_{p^{n+1}}) \to
 H^1(\cK_{n,\fp}, \mu_{p^{n}}),
\end{equation}
where the first arrow is given by corestriction and the second arrow
is induced by the natural map $\mu_{p^{n+1}} \to \mu_{p^{n}}$.  Recall
from Corollary \ref{C:riso} that there is an isomorphism
\begin{equation} \label{E:sriso}
r_n^*: H^1(K,E_{\pi^{*n}}) \xrightarrow{\sim} \Hom(E_{\pi^n},
\cK_{n}^{\times}/\cK_{n}^{\times p^n})^{\Gal(\cK_n/K)}.
\end{equation}
It is not hard to check that \eqref{E:redmap} maps
$\{r_{n+1}^{*}(c^{*}_{n+1})\}(\pi^{*-(n+1)} w_{n+1})$ to
$\{r_{n}^{*}(c^{*}_{n})\}(\pi^{*-n} w_n)$. 
We may therefore define
$$
\tilde{c^*}(w):= [\{r_n^*(c^*_n)\}(\pi^{*-n} w_n)] \in \varprojlim
H^1(\cK_{n,\fp}, \mu_{p^n}),
$$
where the second inverse limit is taken with respect to the maps
\eqref{E:redmap}. It is a standard fact (see e.g. \cite[Appendix B,
Section B.3]{R3}) that
$$
\varprojlim H^1(\cK_{n,\fp}, \mu_{p^n}) \simeq \varprojlim
H^1(\cK_{n,\fp}, \bZ_p(1)),
$$
where the right-hand inverse limit is taken with respect to the
obvious corestriction maps. We may therefore view $\tilde{c^*}(w)$ as
being an element of $\varprojlim H^1(\cK_{n,\fp}, \bZ_p(1))$, and we
write $\kappa^*(c^*) = [\kappa^*(c^*)_n] \in \varprojlim
H^1(\cK_{n,\fp}, \bZ_p(1))$ for this element. We remark that it
follows from the construction of $\kappa^*$ that, for each integer $n
\geq 1$, we have:
\begin{equation} \label{E:kappacong}
\kappa^*(c^*)_n \equiv \{r_n^*(c^*_n)\}(\pi^{*-n} w_n)
\pmod{\cK_{n,\fp}^{\times p^n}}
\end{equation}
in $H^1(\cK_{n,\fp}, \mu_{p^n}) \simeq
\cK_{n,\fp}^{\times}/\cK_{n,\fp}^{\times p^n}$. This implies that
$\kappa^*$ is injective.
\end{proof}

Suppose now that $u \in U_{\infty, \fp}^{*}$ and that $c^* \in
H^1(K_{\fp}, T^*)$ with $\kappa^*(c^*) \in U_{\infty,\fp} \subseteq
\varprojlim H^1(\cK_{n,\fp}, \bZ_p(1))$. Lemma \ref{L:htkum}(b)
implies that, for each integer $n \geq 1$,
$\alpha_{\fp,E_{\pi^{*n}}}(u_n,c^*_n)$ may be determined by
calculating the value of $(\kappa^*(c^*)_n,
\chi_n(u_n))_{\fp,E_{\fp^n}}$; this in turn may be done by using
Wiles's explicit reciprocity law applied to the formal group $\hat{E}$
associated to $E$, as we shall now describe.

For any $\beta=[\beta_n] \in U_{\infty,\fp}$, let $g_{\beta,w}(Z) \in
O_{K,\fp}[[Z]]$ denote the Coleman power series of $\beta$; so, for
every integer $n >0$, we have
\begin{equation} \label{E:col1}
g_{\beta,w}(\hat{w}_n) = \beta_n.
\end{equation}
Set
\begin{equation} \label{E:col2}
\delta g_{\beta,w}(Z):=
  \frac{g'_{\beta,w}(Z)}{g_{\beta,w}(Z) \cdot 
  \lambda'_{\wh{E}}(Z)},
\end{equation}
and write
\begin{equation} \label{E:col3}
\delta_w(\beta) = \delta g_{\beta,w}(0):=
  \frac{g'_{\beta,w}(0)}{g_{\beta,w}(0) \cdot 
  \lambda'_{\wh{E}}(0)} = \frac{g'_{\beta,w}(0)}{g_{\beta,w}(0)},
\end{equation}
(where for the last equality we have used the fact that $\lambda_{\wh{E}}(Z)
= Z + (\text{higher order terms})$).

\begin{remark} \label{R:delrem}
Suppose that $c^* \in H^1(K_{\fp}, T^*)$ with $\kappa^*(c^*) \in
U_{\infty,\fp} \subseteq \varprojlim H^1(\cK_{n,\fp}, \bZ_p(1))$. In
Proposition \ref{P:kappacup}, we shall express
$\delta_w(\kappa^*(c^*))$ in terms of $\exp_{\fp}^{*}(c^*)$, and in
Proposition \ref{P:value}(a), we shall show that, for suitably chosen
$\beta$, we have that $\delta_w(\beta)$ is equal to a non-zero
multiple of $\cL_{\fp}(\psi)$.  This will enable us to deduce that the
canonical class $s_{\fp^*} \in \ch{\ms}_{\fp^*}(K,T^*)$ is of infinite
order when $\cL_{\fp}(\psi) \neq 0$ (see Theorem \ref{T:infinite}).
\qed
\end{remark}

\begin{proposition} \label{P:wiles} 
Suppose that $u \in U_{\infty, \fp}^{*}$ and that $c^* \in
H^1(K_{\fp}, T^*)$ with $\kappa^*(c^*) \in U_{\infty,\fp} \subseteq
\varprojlim H^1(\cK_{n,\fp}, \bZ_p(1))$. Then, with notation as
above, we have

\begin{align} \label{E:i}
\alpha_{\fp,E_{\pi^{*n}}}(u_n, c^*_n) &\equiv
\left(\frac{\psi^*(\fp)}{p} -1 \right) \cdot \delta_w(\kappa^*(c^*))
\cdot \Omega_{\fp} \notag \\ 
&\times \sum_{\sigma \in \Gal(\cK_{n,\fp}^{*}/K_{\fp})} \psi^*(\sigma^{-1})
\log_{\fp}(u_{n}^{\sigma}) \pmod{\fp^n},
\end{align}
where $\Omega_{\fp} \in \cO^{\times}$ is the $\fp$-adic period described in
Section \ref{S:formal}.

Hence we have $(u,c^*)_{\fp,T^*} = \alpha_{\fp,T^*}(u,c^*)
  \cdot w^*$, where 
\begin{align} \label{E:ii}
\alpha_{\fp,T^*}(u,c^*) &=
\left(\frac{\psi^*(\fp)}{p} -1 \right) \cdot \delta_w(\kappa^*(c^*)) 
\cdot \Omega_{\fp} \notag \\
&\times \lim_{n \to \infty} \left\{ \sum_{\sigma \in
  \Gal(\cK_{n,\fp}^{*}/K_{\fp}) } \psi^*(\sigma^{-1})
\log_{\fp}( u_{n}^{\sigma} ) \right\}.
\end{align}

\end{proposition}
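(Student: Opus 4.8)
The plan is to deduce both \eqref{E:i} and \eqref{E:ii} from Wiles's explicit reciprocity law for the Lubin--Tate formal group $\wh{E}$, after first rewriting $\alpha_{\fp,E_{\pi^{*n}}}(u_n,c^*_n)$ as a Kummer pairing on $\wh{E}$ to which that reciprocity law applies.

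The first step is purely formal. Since, by \eqref{E:kappacong}, the classes $\kappa^*(c^*)_n$ and $r_n^*(c^*_n)(\pi^{*-n}w_n)$ coincide in $\cK_{n,\fp}^{\times}/\cK_{n,\fp}^{\times p^n}$, hence in $\cN_{n,\fp}^{\times}/\cN_{n,\fp}^{\times p^n}$, Lemma \ref{L:htkum}(b) gives
\[
\alpha_{\fp,E_{\pi^{*n}}}(u_n,c^*_n)\cdot w_n = -\bigl(\kappa^*(c^*)_n,\ \chi_n(u_n)\bigr)_{\fp,E_{\pi^n}}.
\]
By Proposition \ref{P:compare}(a), $\chi_n(u_n)$ is precisely the class in $H^1(\cN_{n,\fp},E_{\pi^n})$ arising by Kummer theory on $\wh{E}$ from the point $\eta(\hat u_n)\in\wh{E}(\fm_{n,\fp})$, so the right-hand side above is (up to sign) the formal-group Kummer pairing of the norm-coherent local unit $\kappa^*(c^*)=[\kappa^*(c^*)_n]\in U_{\infty,\fp}$ against the point $\eta(\hat u_n)$. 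This is exactly the shape of expression that Wiles's explicit reciprocity law evaluates.

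Next I would apply Wiles's explicit reciprocity law for $\wh E$, using the Coleman power series $g_{\kappa^*(c^*),w}$ of $\kappa^*(c^*)$ and its logarithmic derivative $\delta g_{\kappa^*(c^*),w}$ as in \eqref{E:col2}--\eqref{E:col3}. Set up over a sufficiently large finite layer of $\cN_{n,\fp}$ containing $\eta(\hat u_n)$, this law expresses the pairing above, modulo $\fp^n$, as $\bigl[\tfrac{1}{\pi^n}\Tr(\lambda_{\wh E}(\eta(\hat u_n))\cdot\delta g_{\kappa^*(c^*),w}(\hat w_n))\bigr]\cdot w_n$. Three further ingredients then yield \eqref{E:i}. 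First, Proposition \ref{P:compare}(b) replaces $\lambda_{\wh E}(\eta(\hat u_n))=\log_{E,\fp}(\eta(\hat u_n))$ by $\Omega_{\fp}\cdot\log_{\fp}(\hat u_n)=\Omega_{\fp}\cdot\log_{\fp}(u_n)$ modulo $\fm_{n,\fp}^{p^n}$. Second, because $\Omega_{\fp}^{\sigma}=\psi^*(\sigma^{-1})\Omega_{\fp}$, the period cannot be pulled out of the trace over the unramified tower $\cK_{n,\fp}^{*}$; instead
\[
\Tr_{\cK_{n,\fp}^{*}/K_{\fp}}\bigl(\Omega_{\fp}\log_{\fp}(u_n)\bigr) = \Omega_{\fp}\sum_{\sigma\in\Gal(\cK_{n,\fp}^{*}/K_{\fp})}\psi^*(\sigma^{-1})\log_{\fp}(u_n^{\sigma}),
\]
which is the twisted Galois sum appearing in \eqref{E:i}, while $\delta g_{\kappa^*(c^*),w}(\hat w_n)$ may be replaced by $\delta_w(\kappa^*(c^*))=\delta g_{\kappa^*(c^*),w}(0)\in O_{K,\fp}$ in the relevant congruence since $\hat w_n\to 0$. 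Third, comparing the pairing over $\cN_{n,\fp}$ with the one over the Lubin--Tate tower $\cK_{n,\fp}$ introduces an Euler factor: $\cN_{n,\fp}$ differs from $\cK_{n,\fp}$ by the unramified $\bZ_p$-extension $\cK_{\infty,\fp}^{*}$, on which arithmetic Frobenius acts through $\psi^*(\fp)$, and the resulting descent contributes $1-\psi^*(\fp)/p$; together with the minus sign from the first step this produces the factor $\bigl(\tfrac{\psi^*(\fp)}{p}-1\bigr)$ in \eqref{E:i}. Finally, \eqref{E:ii} follows from \eqref{E:i} by passing to the inverse limit over $n$ in Definition \ref{D:kummer}, using that $\delta g_{\kappa^*(c^*),w}(\hat w_n)\to\delta_w(\kappa^*(c^*))$ and that the twisted Galois sums converge (the latter by norm-coherence of $u\in U_{\infty,\fp}^{*}$).

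The main obstacle is the bookkeeping forced by the infinite extension $\cN_{n,\fp}/\cK_{n,\fp}$: one must realise Wiles's reciprocity law at an appropriate finite layer, keep track of where $\eta(\hat u_n)$ and the auxiliary $\fp^n$-division points sit inside that layer, and extract the precise Euler factor $\psi^*(\fp)/p-1$, with the correct sign and the correct normalisation of $\Tr$ and of $\pi^n$ versus $p^n$, from the comparison of the two pairings together with the limiting behaviour of $\delta g_{\kappa^*(c^*),w}(\hat w_n)$ modulo $\fp^n$. Everything else is a routine combination of the definitions of the various Kummer and Hilbert pairings recalled above with the properties of $\eta$ and of the period $\Omega_{\fp}$ established in Section \ref{S:formal}.
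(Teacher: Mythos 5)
Your overall strategy matches the paper's: rewrite $\alpha_{\fp,E_{\pi^{*n}}}(u_n,c_n^*)$ as a formal-group Kummer pairing via Lemma \ref{L:htkum}(b), apply Wiles's explicit reciprocity law, use Proposition \ref{P:compare}(b) to bring in the period $\Omega_\fp$, and recognise the twisted Galois sum as what remains of the unramified trace because $\Omega_\fp$ transforms by $\psi^{*}(\sigma^{-1})$. That much is sound and is essentially what the paper does.

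However, your explanations for how the remaining two pieces --- the replacement of $\delta g_{\kappa^*(c^*),w}(\hat w_n)$ by the constant $\delta_w(\kappa^*(c^*))$, and the appearance of the Euler factor $1-\psi^*(\fp)/p$ --- are both misattributed, and following them literally would not give a proof. First, $\delta g_{\kappa^*(c^*),w}(\hat w_n)$ cannot be replaced by $\delta g_{\kappa^*(c^*),w}(0)$ by a limiting argument `since $\hat w_n\to 0$': the former lies in $\cK_{n,\fp}$, and the congruence modulo $\fp^n$ requires an exact identity, not asymptotics. Second, the Euler factor does \emph{not} come from comparing the pairing over $\cN_{n,\fp}$ with one over $\cK_{n,\fp}$, i.e.\ from descent through the unramified layer. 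In the paper, both phenomena are produced simultaneously by a single exact trace identity for Coleman power series over the \emph{ramified} Lubin--Tate tower, namely \cite[Chapter~II, Proposition~4.5(iii)]{dS}:
\begin{equation*}
\pi^{-n}\,\Tr_{\fK_{n,\fp}/\cK_{n,\fp}^{*}}\!\bigl(\delta g_{\kappa^*(c^*),w}(\hat w_n)\bigr)
\;=\;\Bigl(1-\tfrac{\psi^*(\fp)}{p}\Bigr)\cdot\delta_w(\kappa^*(c^*)),
\end{equation*}
which is independent of $n$ and lands in $K_\fp$. This is the crucial missing ingredient. With it, the double trace coming from Wiles's law over $\fK_{n,\fp}$ factors: since $\log_{E,\fp}(\wh{\chi_n(u_n)})\in\cK_{n,\fp}^{*}$ and the inner-trace quantity above lies in $K_\fp$, one can pull it out of $\Tr_{\cK_{n,\fp}^*/K_\fp}$, and the outer (unramified) trace then yields only the twisted Galois sum with the period, exactly as in \eqref{E:second}. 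Your sign accounting does come out consistently, but without the de~Shalit trace identity the argument has no mechanism to produce the exact, $n$-independent Euler factor required by the congruence \eqref{E:i}.
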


\begin{proof} Applying Wiles's explicit reciprocity law (see
\cite[Chapter I, Theorem 4.2]{dS}) to evaluate $(\kappa^*(c^*)_n,
\chi_n(u_n))_{\fp,E_{\pi^n}}$, we see (using Lemma \ref{L:htkum})
that the value of $\alpha_{\fp,E_{\pi^{*n}}}(u_n, c^*_n)$ is given by:
\begin{equation} \label{E:newfirst}
\alpha_{\fp,E_{\pi^{*n}}}(u_n, c^*_n) \equiv
-\Tr_{\cK_{n,\fp}^{*}/K_{\fp}} \left\{ \pi^{-n}  
\Tr_{\fK_{n,\fp}/\cK_{n,\fp}^{*}} \left( \log_{E,\fp}(\wh{\chi_n(u_n)}))
\cdot \delta g_{\kappa^*(c^*),w}(\hat{w}_n) \right) \right\} \pmod{\fp^n}. 
\end{equation}

Since $u_n \in U_{n,\fp}^{*}$ and $\chi_n$ is
$\Gal(\ov{K}/\cK_n^*)$-equivariant, it follows that
$\log_{E,\fp}(\wh{\chi_n(u_n)}) \in \cK_{n,\fp}^{*}$. Also, we have
that $\pi^{-n} \Tr_{\fK_{n,\fp}/\cK_{n,\fp}^{*}} (\delta
g_{\kappa^*(c^*),w}(\hat{w}_n)) \in K_{\fp}$ because $\hat{w}_n \in
\cK_{n,\fp}$. Hence \eqref{E:newfirst} implies that

\begin{equation} \label{E:first}
\alpha_{\fp,E_{\pi^{*n}}}(u_n, c^*_n)
\equiv -\left\{ \pi^{-n} \Tr_{\fK_{n,\fp}/\cK_{n,\fp}^{*}}( \delta
g_{\kappa^*(c^*)}(\hat{w}_n) ) \right\} \cdot \Tr_{\cK_{n,\fp}^{*}/K_{\fp}}
(\log_{E,\fp}(\wh{\chi_n(u_n)})) \pmod{\fp^n}.
\end{equation}

From Proposition \ref{P:compare}(b), we see that
\begin{equation*}
\log_{E,\fp}(\wh{\chi_n(u_n)}) \equiv \Omega_{\fp}\cdot
\log_{\fp}(u_n) \pmod{\fp^n}, 
\end{equation*}
and this implies that
\begin{equation} \label{E:second}
\Tr_{\cK_{n,\fp}^{*}/K_{\fp}}(\log_{E,\fp}(\wh{\chi_n(u_n)}) \equiv
\Omega_{\fp} \sum_{\sigma \in \Gal(\cK_{n,\fp}^{*}/K_{\fp})} \psi^{*}
(\sigma^{-1} \log_{\fp}(u_{n}^{\sigma})) \pmod{\fp^{n+1}}.
\end{equation}

We also observe that \cite[Chapter II, Proposition 4.5(iii)]{dS}
implies that
\begin{equation} \label{E:third}
\pi^{-n} \Tr_{\fK_{n,\fp}/\cK_{n,\fp}^{*}}(\delta
g_{\kappa^*(c^*),w}(\hat{w}_n))  
= \left(1- \frac{\psi^*(\fp)}{p} \right) \cdot \delta_w(\kappa^*(c^*)),
\end{equation}
which is independent of $n$. The congruence \eqref{E:i} now follows from
\eqref{E:first}, \eqref{E:second} and \eqref{E:third}. The equality
\eqref{E:ii} is a direct consequence of \eqref{E:i}.
\end{proof}

Suppose now that
$$
\xi \in \Hom(T^*,
U_{\infty,\fp}^{*}/\ov{\cE_{\infty}^{*}})^{\Gal(\cK_{\infty}^{*}/K)},  
\quad
\xi^* \in \Hom(T, U_{\infty,\fp^*}/\ov{\cE_{\infty}})^{\Gal(\cK_{\infty}/K)},
$$
and set
$$
c:= \rho(\xi) \in \ch{\ms}_{\fp}(K,T), \quad c^*:= \rho^*(\xi^*) \in
\ch{\ms}_{\fp^*}(K,T^*).
$$ 
Then the definition of $\rho^*$ implies that $c^* =[c_n^*] =
[(p-1)\pi^nr_{n}^{*-1}(\xi_n^*)] \in \varprojlim_n
H^1(K,E_{\pi^n})$. For any $\vs \in E_{\pi^n}$, a routine computation shows that
$$
r_n^*(c_n^*)(\vs) = \xi_n^*((p-1) \pi^{*n} \vs),
$$
and so setting $\vs = \pi^{*-n} w_n$, we see that
$$
r_n^*(c_n^*)(\pi^{*-n}w_n) = \xi_n^*(w_n)^{p-1}.
$$
Hence it follows from \eqref{E:kappacong} that
\begin{equation} \label{E:kappacong2}
\kappa^*(c^*)_n \equiv \xi_n^*(w_n)^{p-1} \pmod{\cK_{n,\fp}^{\times p^n}}
\end{equation}
in $H^1(\cK_{n,\fp}, \mu_{p^n}) \simeq
\cK_{n,\fp}^{\times}/\cK_{n,\fp}^{\times p^n}$. 

Recall from Section \ref{S:intro} that $\vt$ is a certain generator of
the ideal $\cI$ of $\Lambda(\cK_{\infty})$. We now observe that as
$\xi^*$ is $\Gal(\cK_{\infty}/K)$-equivariant, it follows that
$\xi^*(w)^{\vt} \in \ov{\cE}_{\infty}/ \cI \ov{\cE}_{\infty}$. The
following result describes the relationship between $\kappa^*(c^*)$
and $\xi^*(w)^{\vt}$.

\begin{proposition} \label{P:pain}
With the above notation, we have 
\begin{equation*}
\delta_w(\kappa^*(c^*)) = \delta_w(\xi^*(w)^{\vt}),
\end{equation*}
where $\delta_w(\xi^*(w)^{\vt})$ denotes $\delta_w$ applied to any
lift in $\ov{\cE}_{\infty}$ of $\xi^*(w)^{\vt} \in \ov{\cE}_{\infty}/
\cI \ov{\cE}_{\infty}$.  
\end{proposition}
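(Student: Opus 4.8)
The plan is to prove the stronger fact that, for a suitable lift $\widetilde{\xi}\in U_{\infty,\fp^*}$ of $\xi^*(w)$, one has $\kappa^*(c^*)=\widetilde{\xi}^{\,\vt}$ as norm-coherent unit towers inside $U_{\infty,\fp}$ (here $\widetilde{\xi}^{\,\vt}$ lies in $\ov{\cE}_{\infty}$ by the $\Gal(\cK_\infty/K)$-equivariance of $\xi^*$, as already observed, and $\ov{\cE}_{\infty}$ is viewed inside $U_{\infty,\fp}$). The asserted equality of $\delta_w$-values is then immediate; the reason for phrasing the Proposition in terms of $\delta_w$ is that $\xi^*(w)^{\vt}$ is only well defined modulo $\cI\,\ov{\cE}_{\infty}$, and $\delta_w$ annihilates this ambiguity.

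First I would record the behaviour of $\delta_w$ under the Galois action. For a norm-coherent sequence $e=[e_n]\in U_{\infty,\fp}$ and $\sigma\in\Gal(\cK_\infty/K)$, the Coleman power series satisfies $g_{e^{\sigma},w}(Z)=g_{e,w}([\psi(\sigma)](Z))$, where $[\psi(\sigma)]$ denotes the endomorphism of $\wh{E}$ given by multiplication by $\psi(\sigma)$: indeed the coefficients of $g_{e,w}$ lie in $O_{K,\fp}$, on which $\sigma$ acts trivially, while $\sigma$ carries $\hat{w}_n$ to $[\psi(\sigma)](\hat{w}_n)$, and one concludes by uniqueness of the Coleman series. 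Since $\lambda_{\wh{E}}(Z)=Z+(\text{higher order terms})$, so that $\lambda'_{\wh{E}}(0)=1$ and $[\psi(\sigma)]'(0)=\psi(\sigma)$, the chain rule applied to \eqref{E:col3} gives $\delta_w(e^{\sigma})=\psi(\sigma)\,\delta_w(e)$; extending $\bZ_p$-linearly, $\delta_w(e^{\lambda})=\psi(\lambda)\,\delta_w(e)$ for every $\lambda\in\Lambda(\cK_\infty)$, where $\psi$ also denotes the ring homomorphism $\Lambda(\cK_\infty)\to\bZ_p$ it induces. In particular $\delta_w$ kills $\cI\,\ov{\cE}_{\infty}=\vt\,\ov{\cE}_{\infty}$, so $\delta_w(\xi^*(w)^{\vt})$ is indeed independent of the chosen lift.

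Next I would carry out the comparison level by level. By \eqref{E:kappacong2}, $\kappa^*(c^*)_n\equiv\xi_n^*(w_n)^{p-1}\pmod{\cK_{n,\fp}^{\times p^n}}$, with $\xi_n^*(w_n)\in\cE_n/\cE_n^{p^n}$ read in $\cK_{n,\fp}^{\times}/\cK_{n,\fp}^{\times p^n}$ via the place above $\fp$. Unwinding the construction of $\rho^*$ — the analogue of \eqref{E:rho}, the norm-coherence statement \cite[Lemma 3.16]{PR1}, and the isomorphisms $r_n^*$ of Corollary \ref{C:riso} — exhibits $\xi_n^*$ as obtained from $\xi^*$ by projection to level $n$ followed by raising to the $p^n$-th power, and shows that, for an appropriate lift $\widetilde{\xi}$ of $\xi^*(w)$, the global unit $\xi_n^*(w_n)^{p-1}$ agrees modulo $\cK_{n,\fp}^{\times p^n}$ with the $n$-th component of $\widetilde{\xi}^{\,\vt}\in\ov{\cE}_{\infty}$, taken $\fp$-adically. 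The operator $\vt=\gamma\psi(\gamma^{-1})-1$ enters precisely here: the individual $\xi_n^*(w_n)$ are \emph{not} norm-coherent, since the ``raise to the $p^n$-th power'' step shifts the norm relations by a factor of $p$, whereas $\widetilde{\xi}^{\,\vt}$ is norm-coherent, and the $\Gal(\cK_\infty/K)$-equivariance of $\xi^*$ forces $\vt$ to be the corrective twist. This is the technical heart, and it runs parallel to the computations underlying \cite[Proposition 2.4]{R} and \cite[Sections 3--4]{A1}.

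Finally I would pass to the limit. Set $\delta:=\kappa^*(c^*)\cdot(\widetilde{\xi}^{\,\vt})^{-1}\in U_{\infty,\fp}$; by the previous step each $\delta_n$ lies in $U_{n,\fp}$ and is a $p^n$-th power in $\cK_{n,\fp}^{\times}$, and norm-coherence then makes $\delta_m$ a $p^N$-th power in $\cK_{m,\fp}^{\times}$ for every $N\ge m$. Since each $U_{m,\fp}$ is a finitely generated $\bZ_p$-module, $\bigcap_N\cK_{m,\fp}^{\times p^N}$ consists of roots of unity of order prime to $p$, and these are not one-units; hence $\delta_m=1$ for all $m$, i.e. $\kappa^*(c^*)=\widetilde{\xi}^{\,\vt}$. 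Therefore $\delta_w(\kappa^*(c^*))=\delta_w(\widetilde{\xi}^{\,\vt})=\delta_w(\xi^*(w)^{\vt})$, the last step by the first two paragraphs. I expect the middle step to be the real obstacle: one has to reconcile the corestriction (norm) coherence built into $\kappa^*$ with the Iwasawa-module twist by $\vt$ on the $\xi^*$-side — concretely, to show that the two a priori different recipes producing a norm-coherent unit tower out of $\xi^*$ agree modulo $p^n$-th powers at every finite level.
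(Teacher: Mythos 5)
The key misstep is in your second paragraph, where you assert that for some lift $\widetilde{\xi}$ of $\xi^*(w)$ one has the \emph{exact} equality of norm-coherent towers $\kappa^*(c^*)=\widetilde{\xi}^{\,\vt}$. That is too strong, and the level-by-level comparison you sketch there (which you correctly flag as ``the technical heart'') does not in fact produce an on-the-nose match. What the paper actually establishes at level $n$ is the congruence
\begin{equation*}
(p-1)\,\xi_n^*(w_n)\;\equiv\;\Bigl(\sum_{\sigma\in\Gal(\cK_n/K)}\psi^{-1}(\ov{\sigma})\,\sigma\Bigr)\beta_n
\;\equiv\;\kappa^*(c^*)_n\pmod{p^n\,\ov{\cE}_n},
\end{equation*}
where $\beta_n=(\xi^*(w)^{\vt})_n$. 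This is derived by combining the defining relation $\vt\,\xi_n^*(w_n)=p^n\beta_n$ with the congruence $\vt_n\sum_\sigma\psi^{-1}(\ov{\sigma})\sigma\equiv -(p-1)p^n\pmod{p^n\cI\,\bZ_p[\Gal(\cK_n/K)]}$ from \cite[Lemma 6.3]{R}, then dividing by $\vt$ (legitimate since $\ov{\cE}_n$ has no $\vt$-torsion) and invoking \eqref{E:kappacong2}. The Galois operator $\sum_\sigma\psi^{-1}(\ov{\sigma})\sigma$ sitting in front of $\beta_n$ is not removable by choosing a different lift $\widetilde{\xi}$: changing the lift alters $\beta_n$ only by an element of $\cI\,\ov{\cE}_n$, whereas $\bigl(\sum_\sigma\psi^{-1}(\ov{\sigma})\sigma\bigr)-1$ has nonzero $\psi$-augmentation $|\Gal(\cK_n/K)|-1$ and so does not lie in $\cI$. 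Consequently $\kappa^*(c^*)$ and $\widetilde{\xi}^{\,\vt}$ are genuinely different towers, and the ``pass to the limit'' step in your third paragraph (which is in itself a correct argument) never gets to run, because its hypothesis fails.

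The paper bypasses all of this by going straight from the displayed congruence to the conclusion $\delta_w(\kappa^*(c^*))=\delta_w(\beta)$ via \cite[Chapter II, Lemma 4.8]{dS}. That lemma is precisely the tool that converts a level-by-level congruence with the twist $\sum_\sigma\psi^{-1}(\ov{\sigma})\sigma$ into an equality of logarithmic derivatives; the Galois-equivariance $\delta_w(e^{\sigma})=\psi(\sigma)\delta_w(e)$ you work out in your first paragraph is correct and is part of why the lemma holds, but note that the operator $\sum_\sigma\psi^{-1}(\ov{\sigma})\sigma$ is a level-$n$ object which is not the reduction of any single element of $\Lambda(\cK_\infty)$ (its $\psi$-value $(p-1)p^{n-1}$ diverges with $n$), so one cannot simply apply $\delta_w(e^{\lambda})=\psi(\lambda)\delta_w(e)$ and cancel. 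The fix is to abandon the attempt to prove the towers equal and instead establish only the congruence above, then appeal to de Shalit's lemma.
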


\begin{proof}
Suppose that $\xi^*(w) = \alpha = [\alpha_n]$, with $\alpha_n \in
U_{n,\fp^*}/\ov{\cE}_n$, and set $\xi^*(w)^{\vt} = \beta = [\beta_n]$, with
$\beta_n \in \ov{\cE}_n/\cI \ov{\cE}_n$. It follows from the
definition of $\xi^*_n$ (see Proposition \ref{P:kummerinj}) that 
\begin{equation*}
\xi_n^*(w_n) = \alpha^{p^n} \in \ov{\cE}_n/\ov{\cE}_{n}^{p^n} =
\cE_n/\cE_{n}^{p^n}. 
\end{equation*}
For each $n \geq 1$, let $\vt_n$ denote the projection of $\vt$ to
$\bZ_p[\Gal(\cK_n/K)]$. It follows from \cite[Lemma 6.3]{R} that
\begin{equation} \label{E:theta}
\vt_n \sum_{\sigma \in \Gal(\cK_n/K)} \psi^{-1}(\ov{\sigma}) \sigma
\equiv -(p-1)p^n \pmod{p^n \cI \bZ_p[\Gal(\cK_n/K)]},
\end{equation}
where $\ov{\sigma}$ denotes any lift of $\sigma \in \Gal(\cK_n/K)$ to
$\Gal(\cK_{\infty}/K)$. 

Hence we have (using additive notation for Galois action):
\begin{align*}
\vt \xi_n^*(w_n) &= p^n \beta_n \\
&\equiv -(p-1)^{-1} \vt \left( \sum_{\sigma \in \Gal(\cK_n/K)}
\psi^{-1}(\ov{\sigma}) \sigma \right) \beta_n \pmod{p^n \vt
  \ov{\cE}_n}.
\end{align*}
Since $\ov{\cE}_n$ has no $\vt$-torsion (see \cite[Chapter III,
  Proposition 1.3]{dS}), we can divide this relation
by $\vt$ and apply \eqref{E:kappacong2} to obtain
\begin{align*}
(p-1) \xi_n^*(w_n) &\equiv \left( \sum_{\sigma \in \Gal(\cK_n/K)}
\psi^{-1}(\ov{\sigma}) \sigma \right) \beta_n \pmod{p^n \ov{\cE}_n} \\
&\equiv \kappa^*(c^*)_n \pmod{p^n \ov{\cE}_n}.
\end{align*}
It now follows from \cite[Chapter II, Lemma 4.8]{dS} that
\begin{equation*}
\delta_w(\kappa^*(c^*)) = \delta_w(\beta),
\end{equation*}
and this implies the desired result.
\end{proof}

Let $\ov{\xi(w^*)} \in U_{\infty,\fp}^{*}$ be any lift of $\xi(w^*)$.

\begin{theorem} \label{T:genht}
(a) For any $y^* \in \ch{\ms}_{\fp^*}(K,T^*)$, we have
\begin{align*}
[\rho(\xi), y^*]_{K,\fp^*} &= 
(p-1) \cdot \left(\frac{\psi^*(\fp)}{p} -1 \right) \cdot \delta_w(\kappa^*(y^*)) 
\cdot \Omega_{\fp} \notag \\
&\times \lim_{n \to \infty} \left\{ \sum_{\sigma \in
  \Gal(\cK_{n,\fp}^{*}/K_{\fp})} \psi^*(\sigma^{-1})
\log_{\fp}( (\ov{\xi(w^*)})_{n}^{\sigma} )\right\}.
\end{align*}

(b) We have

\begin{align*}
[\rho(\xi),\rho^*(\xi^*)]_{K,\fp^*} &= 
(p-1) \cdot \left(\frac{\psi^*(\fp)}{p} -1 \right) \cdot \delta_w(\xi^*(w)^{\vt}) 
\cdot \Omega_{\fp} \notag \\
&\times \lim_{n \to \infty} \left\{ \sum_{\sigma \in
  \Gal(\cK_{n,\fp}^{*}/K_{\fp})} \psi^*(\sigma^{-1})
\log_{\fp}( (\ov{\xi(w^*)})_{n}^{\sigma} )\right\}.
\end{align*}
\end{theorem}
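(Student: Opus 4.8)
The plan is to assemble Theorem \ref{T:genht} directly from the ingredients already in place, with Proposition \ref{P:wiles} as the computational heart and Proposition \ref{P:kummerht} as the bridge to the height pairing.

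\medskip

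\textit{Proof of (a).} First I would invoke Proposition \ref{P:kummerht}, which gives
$$
[\rho(\xi), y^*]_{K,\fp^*} \cdot w^* = (\ov{\xi(w^*)}, \loc_{\fp}(y^*))_{\fp,T^*},
$$
reducing the computation of the height pairing to the evaluation of the Kummer pairing \eqref{E:pairing2} on the unit $\ov{\xi(w^*)} \in U_{\infty,\fp}^{*}$ and the local class $c^* := \loc_\fp(y^*) \in H^1(K_\fp,T^*)$. To apply Proposition \ref{P:wiles} to this pairing, I must check its hypothesis that $\kappa^*(c^*) \in U_{\infty,\fp} \subseteq \varprojlim H^1(\cK_{n,\fp}, \bZ_p(1))$; this is where the restricted Selmer condition is used, since the vanishing of $\loc_v(y^*)$ at places above $\fp^*$ together with the relaxed condition away from $p$ forces $\kappa^*(c^*)$ to be a norm-coherent sequence of local \emph{units} at $\fp$ rather than a general principal idele (cf.\ the local description in Corollary \ref{C:kummer3} and Lemma \ref{L:kumhom}). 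Granting this, formula \eqref{E:ii} of Proposition \ref{P:wiles} yields
$$
\alpha_{\fp,T^*}(\ov{\xi(w^*)}, c^*) = \left(\frac{\psi^*(\fp)}{p} - 1\right) \cdot \delta_w(\kappa^*(c^*)) \cdot \Omega_\fp \cdot \lim_{n\to\infty}\left\{\sum_{\sigma \in \Gal(\cK_{n,\fp}^{*}/K_{\fp})} \psi^*(\sigma^{-1}) \log_\fp\big((\ov{\xi(w^*)})_n^\sigma\big)\right\},
$$
and by the definition of $\rho$ in \eqref{E:rho}, the extra factor $(p-1)$ appearing in that formula is carried into $[\rho(\xi), y^*]_{K,\fp^*}$ — concretely, $\rho(\xi) = [(p-1)(\pi^*)^n r_n^{-1}(\xi_n)]$, and the $(\pi^*)^n$ twist disappears under the normalisation of the pairing while the $(p-1)$ survives. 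Tracking this factor carefully gives precisely the stated formula with $(p-1)\cdot(\psi^*(\fp)/p - 1)$ out front.

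\medskip

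\textit{Proof of (b).} This is the specialisation $y^* = \rho^*(\xi^*)$. Setting $c^* := \rho^*(\xi^*)$, the computation preceding Proposition \ref{P:pain} identifies $\kappa^*(c^*)_n \equiv \xi_n^*(w_n)^{p-1} \pmod{\cK_{n,\fp}^{\times p^n}}$ (this is \eqref{E:kappacong2}), and then Proposition \ref{P:pain} gives the clean identity $\delta_w(\kappa^*(c^*)) = \delta_w(\xi^*(w)^{\vt})$. Substituting this into part (a) with $y^* = \rho^*(\xi^*)$ immediately produces the displayed formula in (b). I should also note that $\loc_\fp(\rho^*(\xi^*))$ does satisfy the unit hypothesis needed for (a), which follows from the description of $\rho^*$ together with the fact (used implicitly in Proposition \ref{P:kummerinj}) that the resulting classes are unramified outside $\fp^*$ when localised at $\fp$.

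\medskip

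The main obstacle is verifying the hypothesis of Proposition \ref{P:wiles} — namely that $\kappa^*(\loc_\fp(y^*))$ lands in $U_{\infty,\fp}$ and not merely in the larger group $\varprojlim H^1(\cK_{n,\fp},\bZ_p(1))$. Everything downstream is bookkeeping with the explicit constants $(p-1)$, $(\pi^*)^n$, and $\Omega_\fp$, but this integrality/unit claim is the genuine input and must be extracted from the definition of the restricted Selmer group $\ch{\ms}_{\fp^*}(K,T^*)$ via Corollary \ref{C:kummer3}: the reversed Selmer conditions (trivial localisation above $\fp^*$, unit localisation away from $p$) are exactly what is needed to force the Kummer image to be locally a unit at $\fp$ at every finite level. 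A secondary, purely clerical, subtlety is keeping the sign and the power-of-$(p-1)$ consistent between the definition of $\rho$, the normalisation in Lemma \ref{L:htkum}(b) (which introduces a sign), and the skew-symmetry \eqref{E:skewhilb} of the Hilbert symbol; I would verify this once at finite level $n$ using Proposition \ref{P:height}(b) and then pass to the limit.
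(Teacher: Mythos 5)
Your proposal follows exactly the route the paper takes: the printed proof of Theorem \ref{T:genht} simply cites Propositions \ref{P:kummerht}, \ref{P:wiles} and \ref{P:pain}, and your parts (a) and (b) assemble precisely these three ingredients in the same order, so this is the same argument with the bookkeeping spelled out. Your identification of the hypothesis $\kappa^*(\loc_\fp(y^*))\in U_{\infty,\fp}$ as the nontrivial verification is the right thing to flag. One caution on the $(p-1)$: your explanation that it ``survives'' from the definition of $\rho$ is in tension with a literal reading of Proposition \ref{P:kummerht}, which already equates $[\rho(\xi),y^*]_{K,\fp^*}$ (with $\rho(\xi)$, $(p-1)$ and all, built in) with $\alpha_{\fp,T^*}(\ov{\xi(w^*)},\loc_\fp(y^*))$, leaving no further $(p-1)$ to be harvested from $\rho$ after that reduction; if you want the factor to arise as you describe, you would have to argue that Proposition \ref{P:kummerht} as stated suppresses it and reinstate it there, rather than claim to extract it downstream of that proposition. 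Also a small slip in your hypothesis check: in $\ms_{\fp^*}$ the relaxed condition is at $\fp$, while away from $p$ the condition is the unramified one $H^1_f$, not relaxed; it is the unramified-away-from-$p$ condition together with the strict condition at $\fp^*$ that controls the valuations relevant to Corollary \ref{C:kummer3}.
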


\begin{proof} 
This result follows directly from Propositions \ref{P:kummerht},
\ref{P:wiles} and \ref{P:pain}.
\end{proof}


\section{The Dual Exponential Map} \label{S:dual}

Suppose that $c^* \in H^1(K_{\fp},T^*)$ with $\kappa^*(c^*) \in
U_{\infty,\fp} \subseteq \varprojlim H^1(\cK_{n,\fp}, \bZ_p(1))$, and
let
\begin{equation*}
\exp_{\fp}^{*}: H^1(K_{\fp}, T^*) \to \bQ_p
\end{equation*}
denote the Bloch-Kato dual exponential map (see e.g. \cite[Section
  5]{R4} for an account of the dual exponential map associated to an
elliptic curve). In this section we shall evaluate
$\exp_{\fp}^{*}(c^*)$ in terms of the $\delta_w(\kappa^*(c^*))$
(cf. Remark \ref{R:delrem} above and Proposition \ref{P:kappacup}
below).

We begin by describing the relationship between the cup product
pairing
\begin{equation} \label{E:cup}
\cup: H^1(K_{\fp},T) \times H^1(K_{\fp},T^*) \to \bZ_p
\end{equation}
and the Kummer pairing $(\,,\,)_{\fp,T}$ of the previous
section. This result is probably well known, but we are not aware of a
written reference, and so we include a proof here.

\begin{proposition} \label{P:kumcup}
Suppose that $c=[c_n] \in H^1(K_{\fp},T)$ and $c^*=[c_n^*] \in
H^1(K_{\fp},T^*)$. Then
\begin{equation*}
(\kappa^*(c^*), c)_{\fp,T} = -(c \cup c^*) \cdot w,
\end{equation*}
i.e.
\begin{equation*}
\alpha_{\fp,T}(\kappa^*(c^*),c) =
      [\alpha_{\fp,E_{\pi^{n}}}(\kappa^*(c^*)_n ,c_n)] = -(c \cup c^*). 
\end{equation*}
\end{proposition}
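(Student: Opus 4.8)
The plan is to prove the identity in each finite layer and then pass to the inverse limit. Write $c=[c_n]$ and $c^*=[c_n^*]$, and for each $n$ let $c_n\cup c_n^*\in O_K/\pi^nO_K$ also denote the image of the level‑$n$ cup product
\[
H^1(K_\fp,E_{\pi^n})\times H^1(K_\fp,E_{\pi^{*n}})\xrightarrow{\ \cup\ }H^2(K_\fp,E_{\pi^n}\otimes_{\bZ_p}E_{\pi^{*n}})\xrightarrow{\ e_n\ }H^2(K_\fp,\mu_{p^n})
\]
under the invariant isomorphism $H^2(K_\fp,\mu_{p^n})\simeq\tfrac1{p^n}\bZ/\bZ\simeq O_K/\pi^nO_K$, normalised so that $\zeta_n$ corresponds to $1$. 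Cup products commute with the reduction maps $\mu_{p^{n+1}}\to\mu_{p^n}$ and with the corestriction maps used to build both $c\cup c^*$ and $\kappa^*(c^*)$ (cf.\ the construction of $\kappa^*$ in Lemma \ref{L:kumhom}), so it suffices to show
\[
\alpha_{\fp,E_{\pi^n}}(\kappa^*(c^*)_n,c_n)\equiv-(c_n\cup c_n^*)\pmod{\pi^n}\qquad(n\geq1).
\]

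Fix $n$. By \eqref{E:kappacong} and Lemma \ref{L:kummer1}, $\kappa^*(c^*)_n$ is the class in $H^1(\cK_{n,\fp},\mu_{p^n})\simeq\cK_{n,\fp}^{\times}/\cK_{n,\fp}^{\times p^n}$ of the cocycle $\sigma\mapsto e_n(\pi^{*-n}w_n,c_n^*(\sigma))$; pick a Kummer representative $\nu_n\in\cK_{n,\fp}^{\times}$. By the defining formula for $(\,,\,)_{\fp,E_{\pi^n}}$ in Definition \ref{D:normkum}, $\alpha_{\fp,E_{\pi^n}}(\kappa^*(c^*)_n,c_n)$ is characterised by $\alpha_{\fp,E_{\pi^n}}(\kappa^*(c^*)_n,c_n)\cdot w_n=c_n([\nu_n,K_\fp^{\ab}/\cK_{n,\fp}])$. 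Arguing exactly as in the proof of Lemma \ref{L:kumhilb} (equivalently, by the extension of Lemma \ref{L:htkum}(a) to arbitrary $\nu_n\in\cK_{n,\fp}^{\times}$, whose proof uses only $\nu_n\in\cK_{n,\fp}^{\times}$ and not that $\nu_n$ is a unit), and using $e_n(w_n,\pi^{-n}w_n^*)=\zeta_n$, one rewrites this as
\[
\alpha_{\fp,E_{\pi^n}}(\kappa^*(c^*)_n,c_n)\cdot\zeta_n=\bigl(\nu_n,\,r_n(c_n)(\pi^{-n}w_n^*)\bigr)_{\fp,\mu_{p^n}}.
\]
Replacing $\nu_n$ by $\kappa^*(c^*)_n=r_n^*(c_n^*)(\pi^{*-n}w_n)$ and applying the skew‑symmetry \eqref{E:skewhilb},
\[
\alpha_{\fp,E_{\pi^n}}(\kappa^*(c^*)_n,c_n)\cdot\zeta_n=\bigl(r_n^*(c_n^*)(\pi^{*-n}w_n),\,r_n(c_n)(\pi^{-n}w_n^*)\bigr)_{\fp,\mu_{p^n}}=\bigl(r_n(c_n)(\pi^{-n}w_n^*),\,r_n^*(c_n^*)(\pi^{*-n}w_n)\bigr)^{-1}_{\fp,\mu_{p^n}}.
\]

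It then remains to show that $\bigl(r_n(c_n)(\pi^{-n}w_n^*),\,r_n^*(c_n^*)(\pi^{*-n}w_n)\bigr)_{\fp,\mu_{p^n}}=\zeta_n^{\,c_n\cup c_n^*}$; granting this and cancelling $\zeta_n$ in the last display gives $\alpha_{\fp,E_{\pi^n}}(\kappa^*(c^*)_n,c_n)\equiv-(c_n\cup c_n^*)\pmod{\pi^n}$, the sign being produced by the inversion in \eqref{E:skewhilb}, and passing to the limit proves the proposition. To establish the displayed equality I would use that the Hilbert pairing of Definition \ref{D:hilbert} is, by local class field theory, the invariant of a cup product (cf.\ \cite[Chapter XIV, \S2]{S1}), together with the cocycle descriptions $r_n(c_n)(\pi^{-n}w_n^*)=[\sigma\mapsto e_n(c_n(\sigma),\pi^{-n}w_n^*)]$ and $r_n^*(c_n^*)(\pi^{*-n}w_n)=[\sigma\mapsto e_n(\pi^{*-n}w_n,c_n^*(\sigma))]$ coming from Lemma \ref{L:kummer1} and Corollary \ref{C:riso}: functoriality of $\cup$ in the coefficients collapses the product of these two classes onto $c_n\cup c_n^*$, the constant arguments $\pi^{-n}w_n^*$ and $\pi^{*-n}w_n$ being absorbed via the Weil‑pairing identities $e_n(\pi^*\vs,\vs^*)=e_n(\vs,\pi\vs^*)$ and $\zeta_n=e_n(\pi^{*-n}w_n,w_n^*)=e_n(w_n,\pi^{-n}w_n^*)$, while restriction and corestriction match the invariant computed over $\cK_{n,\fp}$ (where the Hilbert symbol naturally lives) against the one computed over $K_\fp$ (where $c_n\cup c_n^*$ lives).

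The hard part is precisely this last computation: keeping consistent the normalisation of the invariant isomorphism via $\zeta_n$, the twist $E_{\pi^n}\otimes_{\bZ_p}E_{\pi^{*n}}\xrightarrow{\sim}\mu_{p^n}$ afforded by $e_n$, the identification of the Hilbert symbol with a cup product, and the restriction--corestriction passage between $\cK_{n,\fp}$ and $K_\fp$ --- and in particular verifying that no spurious sign enters the displayed equality, so that the overall sign in the proposition is $-1$ and not $+1$. I would isolate the needed input as a single local duality lemma, expressing, for a finite $G_{K_\fp}$‑module $M$ split by a finite abelian extension $L/K_\fp$, the cup product $H^1(K_\fp,M)\times H^1\!\bigl(K_\fp,\Hom_{\bZ}(M,\mu_{p^n})\bigr)\to H^2(K_\fp,\mu_{p^n})$ in terms of local Artin symbols over $L$, and then specialising to $M=E_{\pi^n}$, $L=\cK_{n,\fp}$ (or $L=\fK_{n,\fp}=K_\fp(E_{p^n})$).
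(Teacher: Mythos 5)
Your proposal is correct and follows essentially the same path as the paper's own proof: reduce to level $n$, convert the Kummer pairing $\alpha_{\fp,E_{\pi^n}}$ to the Hilbert symbol over $\cN_{n,\fp}$ via the $r_n$, $r_n^*$ maps (extending the proof of Lemma \ref{L:htkum}(a) from units to arbitrary elements, which, as you note, requires no change), identify the Hilbert symbol with a cup product, and invoke anticommutativity together with functoriality under the Weil-pairing isomorphisms $E_{\pi^n}\simeq\mu_{p^n}$, $E_{\pi^{*n}}\simeq\mu_{p^n}$ to collapse everything to $-(c_n\cup c_n^*)$. The only cosmetic difference is that you apply the skew-symmetry \eqref{E:skewhilb} of the Hilbert symbol before identifying it with a cup product, whereas the paper passes to the cup product first and then uses its anticommutativity --- the sign bookkeeping is identical, and your caution about tracking normalisations in the final functoriality step is warranted, as the paper compresses that step into a one-line appeal to functoriality as well.
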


\begin{proof}
Let
\begin{equation*}
\cup: H^1(K_{\fp},E_{\pi^n}) \times H^1(K_{\fp},E_{\pi^{*n}}) \to
\bZ/p^n\bZ
\end{equation*}
denote the cup product pairing `at level $n$' afforded by
\eqref{E:cup}. To prove the desired result, it suffices to show that,
for each integer $n \geq 1$, we have
\begin{equation*}
(r^*_n(c^*_n)(\pi^{-*n}w_n), c_n)_{\fp,E_{\pi^{n}}} = -(c_n \cup
c_n^*) \cdot w_n.
\end{equation*}

In order to do this, we first recall (see e.g. \cite[Chapter XIV]{S1})
that the Hilbert pairing $(\,,\,)_{\fp,\mu_{p^n}}$ may be identified
with a cup product pairing
\begin{equation*}
\cup: H^1(\cN_{n,\fp}, \mu_{p^n}) \times H^1(\cN_{n,\fp}, \mu_{p^n})
\to \bZ/p^n\bZ
\end{equation*}
so that
\begin{equation*}
(a,b)_{\fp,\mu_{p^n}} = (a \cup b) \cdot \zeta_n.
\end{equation*}
We also note that the $\Gal(\ov{K}/\cN_n)$-equivariant
isomorphisms
\begin{align*}
&E_{\pi^n} \xrightarrow{\sim} \mu_{p^n}; \quad w_n \mapsto e_n(w_n,
  \pi^{-n} w_n^*), \\
&E_{\pi^{*n}} \xrightarrow{\sim} \mu_{p^n}; \quad w^*_n \mapsto
e_n(\pi^{*-n}w_n, w_n^*) 
\end{align*}
induce isomorphisms of cohomology groups
\begin{align*}
&\kappa_n: H^1(\cN_{n,\fp},E_{\pi^n}) \xrightarrow{\sim}
  H^1(\cN_{n,\fp}, \mu_{p^n});\quad y_n \mapsto
  r_n(y_n)(\pi^{-n}w_n^*) ;\\
&\kappa^*_n: H^1(\cN_{n,\fp},E_{\pi^{*n}}) \xrightarrow{\sim}
  H^1(\cN_{n,\fp}, \mu_{p^n});\quad y^*_n \mapsto
  r^*_n(y^*_n)(\pi^{*-n}w_n),
\end{align*}
and via functoriality of cup product pairings, we have
\begin{equation*}
c_n \cup c_n^* = \kappa_n(c_n) \cup \kappa^*_n(c_n^*) =
-(\kappa_n^*(c_n^*) \cup \kappa_n(c_n)).
\end{equation*}
Hence, from Lemma \ref{L:htkum}(b), we see that
\begin{align*}
\alpha_{\fp, E_{\pi^{n}}}(r^*_n(c^*_n)(\pi^{-*n}w_n),c_n) 
&= \alpha_{\fp,\mu_{p^n}}(r^*_n(c^*_n)(\pi^{-*n}w_n),
r_n(c_n)(\pi^{-n} w^*_n)) \\
&= \kappa^*_n(c_n) \cup \kappa_n(c_n) \\
&= -(\kappa_n(c_n) \cup\kappa^*_n(c^*_n )) \\
&= -(c_n \cup c_n^*),
\end{align*}
as required.
\end{proof}
 
We can now state the main result of this section.

\begin{proposition} \label{P:kappacup}
Suppose that $c^* \in H^1(K_{\fp},T^*)$ with $\kappa^*(c^*) \in
U_{\infty,\fp} \subseteq \varprojlim H^1(\cK_{n,\fp}, \bZ_p(1))$. 
Then
\begin{equation*}
\exp_{\fp}^{*}(c^*) = \left( \frac{\psi^*(\fp)}{p} - 1
\right) \cdot \delta_w (\kappa^*(c^*)).
\end{equation*}
\end{proposition}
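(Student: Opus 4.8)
The plan is to relate the dual exponential map $\exp_\fp^*$ to the Coleman power series invariant $\delta_w$ by going through the cup product pairing and Wiles's reciprocity law. The key observation is that both sides of the asserted identity can be expressed in terms of the pairing of $c^*$ against a well-chosen element $c \in H^1(K_\fp,T)$; the point is that $H^1(K_\fp,T)$ and $H^1(K_\fp,T^*)$ are dual under the cup product \eqref{E:cup}, and the dual exponential map on $H^1(K_\fp,T^*)$ is, by its very definition (via the exponential/logarithm of the local Tate module of $\hat E$), computed by pairing against classes in the image of the Bloch--Kato exponential $\exp_\fp \colon D_{\mathrm{dR}} \to H^1(K_\fp,T)$. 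Concretely, I would choose $c$ (or rather a norm-coherent system $[c_n] \in H^1(K_\fp,T) = \varprojlim H^1(K_\fp,E_{\pi^n})$) so that $\kappa(c) \in U_{\infty,\fp^*}$ corresponds to a suitable unit (a cyclotomic-type or tautological element attached to the period $\Omega_\fp$), making the left-hand logarithmic/cup-product computation tractable.

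The main steps, in order, are: (i) Use Proposition \ref{P:kumcup} to rewrite the cup product $c \cup c^*$ in terms of $\alpha_{\fp,T}(\kappa^*(c^*),c)$, i.e. in terms of the Kummer pairing $(\,,\,)_{\fp,T}$ of $\kappa^*(c^*) \in U_{\infty,\fp}$ against $c$. (ii) Apply Wiles's explicit reciprocity law (as packaged in Proposition \ref{P:wiles}, with the roles of $\fp$ and $\fp^*$ interchanged per Remark \ref{R:switch}) to evaluate this pairing in terms of $\delta_w(\kappa^*(c^*))$, the trace factor $(1 - \psi^*(\fp)/p)$, the period $\Omega_\fp$, and a sum of $\fp$-adic logarithms of the conjugates of the unit system representing $c$. (iii) Independently, identify the dual exponential map: by the defining duality $\langle \exp_\fp(\omega), c^*\rangle_{\cup} = \langle \omega, \exp_\fp^*(c^*)\rangle_{\mathrm{dR}}$, for the distinguished class $c$ above one gets that $c \cup c^*$ equals $\exp_\fp^*(c^*)$ times an explicit normalising constant built from the same logarithmic sum and period. (iv) Divide: the logarithmic sum and $\Omega_\fp$ cancel between (ii) and (iii), leaving exactly $\exp_\fp^*(c^*) = (\psi^*(\fp)/p - 1)\, \delta_w(\kappa^*(c^*))$. (Note the sign: the two minus signs — one from Proposition \ref{P:kumcup}, one from the $(\psi^*(\fp)/p - 1)$ versus $(1 - \psi^*(\fp)/p)$ bookkeeping in Proposition \ref{P:wiles} — must be tracked carefully.)

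The hard part will be step (iii): pinning down the precise normalisation of the Bloch--Kato dual exponential map for $\hat E$ and matching it against the explicit-reciprocity-law formula, so that the auxiliary class $c$ and the logarithmic sum drop out cleanly with the correct Euler-type factor. This is essentially the content of comparing two different but classical descriptions of $\exp_\fp^*$ — one via $p$-adic Hodge theory (Bloch--Kato) and one via Coleman power series and Wiles's reciprocity — and the bookkeeping of the periods $\Omega_\fp$, the factor $\eta'(0)$, and the Euler factor at $\fp$ is where errors creep in. I would handle this by appealing to the standard comparison (e.g. as in \cite[Chapter I]{dS} or \cite{R4}, \cite{PR1}) between the dual exponential and the logarithmic derivative $\delta_w$ of the Coleman series, which is exactly the shape of \cite[Chapter II, Proposition 4.5]{dS} already invoked in the proof of Proposition \ref{P:wiles}; the remaining work is then just to insert the Euler factor and check that the $\fp^*$-interchanged version of Proposition \ref{P:wiles} produces the stated constant with no leftover $\Omega_\fp$.
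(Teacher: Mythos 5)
Your high-level plan is the same as the paper's (Proposition \ref{P:kumcup} + Wiles reciprocity + the defining duality of $\exp_\fp^*$), but step (ii) as you describe it is the wrong reference and would lead you astray. Proposition \ref{P:wiles} — and its $\fp\leftrightarrow\fp^*$ interchange — evaluates the pairing $(\,,\,)_{\fp,T^*}$ on $U^*_{\infty,\fp}\times H^1(K_\fp,T^*)$ (respectively $(\,,\,)_{\fp^*,T}$ on $U_{\infty,\fp^*}\times H^1(K_{\fp^*},T)$). What is needed here is a formula for $(\kappa^*(c^*),x)_{\fp,T}$ on $U_{\infty,\fp}\times H^1(K_\fp,T)$ with $x$ \emph{arbitrary}, and this is a different packaging of Wiles's reciprocity law: the paper cites \cite[Chapter IV, \S2.5]{dS}, which gives
\[
(\kappa^*(c^*),x)_{\fp,T}=\Bigl\{\log_{E,\fp}(x)\cdot\Bigl(1-\tfrac{\psi^*(\fp)}{p}\Bigr)\cdot\delta_w(\kappa^*(c^*))\Bigr\}\cdot w.
\]
Crucially, this form is expressed directly in terms of $\log_{E,\fp}(x)$, i.e.\ on the $\hat E$-side, whereas Proposition \ref{P:wiles} passes to the $\hat\bG_m$-side and consequently acquires the period $\Omega_\fp$ and the Galois-twisted logarithm sums you were planning to cancel in step (iv).

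Once you have the correct reciprocity formula, the rest collapses: there is no need to fix a distinguished auxiliary class $c$. The defining property of the dual exponential is the linear identity $x\cup c^*=\log_{E,\fp}(x)\cdot\exp_\fp^*(c^*)$ valid for all $x\in H^1(K_\fp,T)$; combining this with Proposition \ref{P:kumcup} (which supplies the minus sign) and the displayed reciprocity formula, $\log_{E,\fp}(x)$ appears linearly on both sides and drops out, giving $\exp_\fp^*(c^*)=(\psi^*(\fp)/p-1)\delta_w(\kappa^*(c^*))$ immediately. The ``hard part'' you anticipate in step (iii) — chasing $\Omega_\fp$, $\eta'(0)$, and the logarithmic sums — never arises, because the two descriptions of the pairing are already normalised against the same object $\log_{E,\fp}$.
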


\begin{proof}
It follows from the definition of the dual exponential map (see
\cite[Section 5]{R4}) that
\begin{equation*}
x \cup c^* = \log_{E,\fp}(x) \cdot \exp_{\fp}^{*}(c^*)
\end{equation*}
for all $x \in H^1(K_{\fp},T)$. (Here we have extended the logarithm
map $\log_{E,\fp}$ from $E(K_{\fp}) \otimes \bZ_p$ to $H^1(K_{\fp},T)$
via linearity.)

From Wiles's explicit reciprocity law (see \cite[Chapter IV,
  \S2.5]{dS} for the version we require), we have
\begin{equation*}
(\kappa^*(c^*), x)_{\fp,T} = \left\{ \log_{E,\fp}(x) \cdot 
\left( 1 - \frac{\psi^*(\fp)}{p}\right) \cdot \delta_w (\kappa^*(c^*))
\right\} \cdot w.
\end{equation*}
The result now follows from Proposition \ref{P:kumcup}.
\end{proof}

\section{Elliptic units and canonical elements} \label{S:elliptic}

We shall now explain how elliptic units may be used, following the
methods described in \cite{R}, to construct canonical elements
$$
s_{\fp} \in \ch{\ms}_{\fp}(K,T),\quad s_{\fp^*} \in
\ch{\ms}_{\fp^*}(K,T^*)
$$
when $r=0$. These are the analogues in the present situation of the
elements $x_{\fp}^{(1)} \in \ch{\Sel}(K,T)$ and $x_{\fp^*}^{(1)} \in
\ch{\Sel}(K,T^*)$ constructed in \cite[\S4]{R} when $r=1$.

Let $\cC_\infty \subseteq \cE_\infty$ and $\cC^*_\infty \subseteq
\cE^*_\infty$ denote the norm-coherent systems of elliptic units
constructed in \cite[\S3]{R}, and write $\ov{\cC}_\infty$
and $\ov{\cC}^*_\infty$ for the closures of $\cC_\infty$ in
$\ov{\cE}_\infty$ and $\cC^*_\infty$ in $\ov{\cE}^*_\infty$
respectively. 
Recall that $\ff \subseteq O_K$ denotes
the conductor of the Grossencharacter associated to $E$, and fix $B \in
E_{\ff}/\Gal(\ov{K}/K)$ of exact order $\ff$ as described in
\cite[\S6]{R}. For each generator $w=[w_n]$ of $T$ and $w^*=[w_n^*]$
of $T^*$, let
\begin{align*}
&\theta_B(w)= [\theta_B(w_n)] \in \ov{\cC}_\infty \subseteq
U_{\infty,\fp} \otimes \bQ \\
&\theta_B(w^*)=[\theta_B(w_n^*)] \in \ov{\cC}^*_\infty \subseteq
U_{\infty,\fp}^{*} \otimes \bQ
\end{align*}
denote the norm-coherent sequences of elliptic units constructed in
\cite[\S3]{R}. In particular, we have that
\begin{equation} \label{E:ell1}
\theta_B(w)^{\sigma} = \theta_B(w^{\sigma}),\qquad
\theta_B(w^*)^{\sigma} = \theta_B(w^{*\sigma})
\end{equation}
for all $\sigma \in \Gal(\ov{K}/K)$, and
for each $n>0$, we have that
\begin{equation} \label{E:ell2}
\ov{\cC}_n= \bZ_p[\Gal(\cK_n/K)] \cdot \theta_B(w_n),\qquad
\ov{\cC}^*_n= \bZ_p[\Gal(\cK^*_n/K)] \cdot \theta_B(w^*_n)
\end{equation}
(see \cite[Propositions 3.1 and 3.2]{R1}).

The following result is due to Rubin (see \cite[Theorem 7.2]{R1}) and
it provides a crucial link between elliptic units and $p$-adic
L-functions. It is proved via a careful analysis of the construction
of the Katz two-variable $p$-adic L-function $\cL_{\fp}$ using
$p$-adic measures arising from elliptic units as described in
\cite[Chapter II, \S4]{dS}. We refer the reader to \cite[\S7]{R1} for
the details of the proof.

\begin{theorem} \label{T:keylink}
Suppose that $w$ is any generator of $T$. Then there are natural
Galois-equivariant injections
\begin{equation*}
\iota_w: U_{\infty,\fp} \to \Lambda(\cK_{\infty})_{\cO}, \qquad
\iota^*: U^{*}_{\infty,\fp} \to \Lambda(\cK^{*}_{\infty})_{\cO},
\end{equation*}
(where $\iota_w$ depends upon the choice of $w$, but $\iota^*$ does
not), satisfying the following properties. Suppose that $w^*$ is a
generator of $T^*$ fixed as in Section \ref{S:formal}. Then

(a) We have that
\begin{equation} \label{E:key1}
(1-\Fr_{\fp^*}) \iota_w(\theta_B(w)) = \cL_{\fp} |_{\cK_{\infty}},
\qquad
\iota^*(\theta_B(-\bN(\ff)^{-1} w^*)) = \cL_{\fp}
|_{\cK^{*}_{\infty}},
\end{equation}
where $\Fr_{\fp^*}$ denotes the Frobenius of $\fp^*$ in
$\Gal(\cK_{\infty}/K)$, and $\cL_{\fp} |_{\cK_{\infty}}$ and
$\cL_{\fp}|_{\cK^{*}_{\infty}}$ denote the images of $\cL_{\fp}$ under the
natural quotient maps $\Lambda(\fK_{\infty})_{\cO} \to
\Lambda(\cK_{\infty})_{\cO}$ and $\Lambda(\fK_{\infty})_{\cO} \to
\Lambda(\cK^{*}_{\infty})_{\cO}$ respectively.
\smallskip

(b) For every $\beta \in U_{\infty,\fp}$, we have
\begin{equation} \label{E:key2}
\iota_w(\beta)= \left( 1 - \frac{\psi^*(\fp)}{p} \right) \cdot
\Omega_{\fp} \cdot \delta_w(\beta).
\end{equation}
\smallskip

(c) For every $u=[u_n] \in U^{*}_{\infty,\fp}$, we have
\begin{equation} \label{E:key3}
\iota^*(u)(\psi^*) =
\left( 1 -
\frac{\psi^*(\fp)}{p} \right) \cdot
\lim_{n \to \infty} \left\{ \sum_{\tau \in \Gal(\cK_n^*/K)} \psi^{*-1}(\tau)
\log_{\fp} ( u_{n}^{\tau}) \right\}.
\end{equation}
\qed
\end{theorem}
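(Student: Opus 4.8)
To prove Theorem~\ref{T:keylink}, the plan is to follow Rubin's argument in \cite[\S7]{R1}, whose core is the construction of the maps $\iota_w$ and $\iota^*$ out of Coleman power series, followed by the identification of the resulting elliptic-unit measures with $\cL_\fp$. Since $\hat E$ is a height-one Lubin--Tate formal group for the tower $\cK_\infty/K$, every norm-coherent unit $\beta=[\beta_n]\in U_{\infty,\fp}$ has a Coleman power series $g_{\beta,w}(Z)\in O_{K,\fp}[[Z]]$ with $g_{\beta,w}(\hat w_n)=\beta_n$ for all $n$. Transporting the standard correspondence between power series and $p$-adic measures through the isomorphism $\eta\colon\hat{\bG}_m\xrightarrow{\sim}\hat E$ of Section~\ref{S:formal} --- which is precisely the point at which the coefficient ring $\cO$ and the period $\Omega_\fp=\eta'(0)$ enter --- one attaches to $\beta$ a measure $\iota_w(\beta)\in\Lambda(\cK_\infty)_\cO$ whose moments are governed by $\delta g_{\beta,w}$ evaluated at the torsion points of $\hat E$, exactly as in the construction of the Katz $p$-adic $L$-function in \cite[Chapter II, \S4]{dS}. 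Galois-equivariance is then formal from the transformation law $g_{\beta^\sigma,w}=g_{\beta,w}\circ[\psi(\sigma)]$, which matches the $\Lambda$-module structure, while injectivity follows from Coleman's theorem that a norm-coherent unit is determined by its power series. On the $\fp^*$-side the extensions $\cK^*_{n,\fp}/K_\fp$ are unramified, so no Lubin--Tate parameter is involved: one defines $\iota^*(u)$ directly as the measure on $\Gal(\cK^*_\infty/K)$ whose value against a character $\chi$ is, up to the Euler factor $1-\psi^*(\fp)/p$ at $\fp$, the $\chi$-twisted semilocal logarithm $\lim_n\sum_{\tau}\chi^{-1}(\tau)\log_\fp(u_n^\tau)$. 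For this map $\Lambda$-linearity is immediate, it is visibly canonical (independent of the choice of $w$), and injectivity follows from the non-triviality of the semilocal logarithm on norm-coherent units --- equivalently, from the weak $p$-adic Leopoldt conjecture for $K$.

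Granting these two constructions, parts (b) and (c) reduce to unwinding definitions. Part (c) is the defining property of $\iota^*$ specialised to $\chi=\psi^*$. For part (b) one reads off the value of $\iota_w(\beta)$ at the character $\psi$ of $\Gal(\cK_\infty/K)$: the logarithmic derivative contributes $\delta g_{\beta,w}(0)=\delta_w(\beta)$, the comparison of the formal logarithms of $\hat{\bG}_m$ and $\hat E$ in Proposition~\ref{P:compare}(b) contributes the factor $\Omega_\fp$, and the operator $\pi^{-n}\Tr_{\fK_{n,\fp}/\cK^*_{n,\fp}}$ applied to $\delta g_{\beta,w}(\hat w_n)$ contributes the Euler factor $1-\psi^*(\fp)/p$, exactly as in \cite[Chapter II, Proposition 4.5(iii)]{dS}; this is the same computation already used in \eqref{E:third} in the proof of Proposition~\ref{P:wiles}.

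The substance of the theorem is part (a): the identification of the elliptic-unit measures with the restrictions of the Katz two-variable $p$-adic $L$-function. The plan is to compute the moments of $\iota_w(\theta_B(w))$ and of $\iota^*(\theta_B(-\bN(\ff)^{-1}w^*))$ against the characters $\psi^k\psi^{*j}\chi$ occurring in the interpolation formula \eqref{E:interpolate}, and to compare them term-by-term with the elliptic-unit distribution that defines $\cL_\fp$ in \cite[Chapter II, \S4]{dS}. On the $\cK^*_\infty$-side this comparison is essentially tautological once the normalisation $-\bN(\ff)^{-1}w^*$ has been built in, whereas on the $\cK_\infty$-side the Euler-system distribution relation satisfied by the $\theta_B(w_n)$ as $n$ varies produces precisely the twist by $1-\Fr_{\fp^*}$. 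I expect this last step to be the main obstacle --- not for any conceptual reason, but because it forces one to pin down every normalisation constant (the choice of $B$ of exact order $\ff$, the period $\Omega_\fp=\eta'(0)$, the chosen normalisation of the Weil pairing, and the factor $\bN(\ff)$) so that the two sides agree exactly rather than merely up to a $p$-adic unit. Once the moment comparison underlying (a) is carried out for these particular units, the three assertions of the theorem are all in hand.
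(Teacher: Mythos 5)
The paper gives no proof of this result, attributing it to Rubin \cite[Theorem 7.2]{R1} and deferring to \cite[\S7]{R1} and \cite[Chapter~II, \S4]{dS} for the construction of $\iota_w$ and $\iota^*$ via Coleman power series and elliptic-unit measures. Your sketch is a faithful high-level outline of exactly that argument --- Coleman power series on the ramified tower for $\iota_w$, semilocal logarithms on the unramified tower for $\iota^*$, definitional unwinding for (b) and (c) (reading (b), correctly, as an evaluation at $\psi$), and moment comparison against the Katz elliptic-unit construction for (a) --- so it takes the same approach the paper intends.
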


Now suppose that $r=0$. Then $L_{\fp}(1) \neq 0$, and so we see from
\eqref{E:key1} that
$$
\ov{\cC}_\infty \subseteq \ov{\cE}_\infty \subset U_{\infty,\fp}
\otimes \bQ \quad \text{and} \quad \ov{\cC}_\infty \not \subseteq
\cI(U_{\infty,\fp} \otimes \bQ).
$$
In particular, we have that $\ov{\cC}_\infty \not \subseteq \cI
\ov{\cE}_\infty \subseteq U_{\infty,\fp} \otimes \bQ$. Similar remarks
imply that also $\ov{\cC}^*_\infty \not \subseteq \cI^*
\ov{\cE}^*_\infty \subseteq U^{*}_{\infty,\fp^*} \otimes
\bQ$. Applying Remark \ref{R:leo}, we deduce that
\begin{equation} \label{E:r1}
\ov{\cC}^*_\infty \not \subseteq \cI^* \ov{\cE}^*_\infty \subseteq
U_{\infty,\fp}^{*} \otimes \bQ.
\end{equation} 

Also, if $L_{\fp}(1) \neq 0$, then this implies that $L_{\fp}^{*}(1) =
0$, and so again from \eqref{E:key1}, it follows that we have
\begin{equation} \label{E:r2}
\ov{\cC}_\infty^* \subseteq \cI^* (U_{\infty,\fp}^{*} \otimes \bQ).
\end{equation}

Recall from Section \ref{S:intro} that $\vt^*$ is a certain generator of the
ideal $\cI^*$ of $\Lambda(\cK_{\infty}^{*})$.

\begin{proposition} \label{P:canhom}
There exists a unique homomorphism $\sigma_{\fp} \in
\Hom(T^*,(U_{\infty,\fp}^{*} \otimes
\bQ)/\ov{\cE}_\infty^*)^{\Gal(\cK_{\infty}^{*}/K)}$ such that
$$
\sigma_{\fp}(w^*)^{\vt^*} = \theta_B(w^*)
$$
in $\ov{\cE}_\infty^*/\cI^*\ov{\cE}_\infty^*$.
\end{proposition}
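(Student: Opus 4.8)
The plan is to construct $\sigma_{\fp}$ by ``dividing'' the norm-coherent elliptic unit $\theta_B(w^*)$ by $\vt^*$ and then transporting the result along the identification of $T^*$ with the free rank-one $O_{K,\fp^*}$-module generated by $w^*$. Since $\vt^*$ generates the ideal $\cI^*$ of $\Lambda(\cK_\infty^*)$, relation \eqref{E:r2} (established above under the standing hypothesis $r=0$) gives
\[
\theta_B(w^*) \in \ov{\cC}^*_\infty \subseteq \cI^*\bigl(U_{\infty,\fp}^{*} \otimes \bQ\bigr) = \vt^*\bigl(U_{\infty,\fp}^{*} \otimes \bQ\bigr),
\]
so there is some $\alpha \in U_{\infty,\fp}^{*} \otimes \bQ$ with $\vt^*\alpha = \theta_B(w^*)$. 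Writing $\bar{\alpha}$ for the class of $\alpha$ in $(U_{\infty,\fp}^{*} \otimes \bQ)/\ov{\cE}^*_\infty$, I would take $\sigma_{\fp}$ to be the homomorphism determined by $\sigma_{\fp}(w^*) = \bar{\alpha}$.

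Next I would check that $\sigma_{\fp}$ lies in $\Hom\bigl(T^*,(U_{\infty,\fp}^{*} \otimes \bQ)/\ov{\cE}^*_\infty\bigr)^{\Gal(\cK_\infty^*/K)}$. As $\sigma_{\fp}$ is determined by its value at $w^*$, on which Galois acts through $\psi^*$, invariance is equivalent to the condition $(\sigma - \psi^*(\sigma))\bar{\alpha} = 0$ for all $\sigma \in \Gal(\cK_\infty^*/K)$; since the elements $\sigma - \psi^*(\sigma)$ generate the twisted augmentation ideal $\cI^* = \Ker(\psi^*)$, and $\cI^* = \vt^*\Lambda(\cK_\infty^*)$, this reduces to the single condition $\vt^*\bar{\alpha} = 0$, i.e.\ $\vt^*\alpha \in \ov{\cE}^*_\infty$ --- which holds because $\vt^*\alpha = \theta_B(w^*) \in \ov{\cC}^*_\infty \subseteq \ov{\cE}^*_\infty$. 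Granting invariance, for any lift $\alpha'$ of $\sigma_{\fp}(w^*)$ the element $\vt^*\alpha'$ lies in $\ov{\cE}^*_\infty$ and its class modulo $\cI^*\ov{\cE}^*_\infty$ is independent of the choice of $\alpha'$ (replacing $\alpha'$ by $\alpha' + e$ with $e \in \ov{\cE}^*_\infty$ changes $\vt^*\alpha'$ by $\vt^*e \in \cI^*\ov{\cE}^*_\infty$, and any $\vt^*$-torsion ambiguity in $\alpha'$ does not affect $\vt^*\alpha'$): this class is by definition $\sigma_{\fp}(w^*)^{\vt^*}$, and taking $\alpha' = \alpha$ shows $\sigma_{\fp}(w^*)^{\vt^*} = \theta_B(w^*)$ in $\ov{\cE}^*_\infty/\cI^*\ov{\cE}^*_\infty$, as desired.

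For uniqueness, suppose $\sigma_{\fp}'$ also has the stated property. Then $\tau := \sigma_{\fp} - \sigma_{\fp}'$ is a $\Gal(\cK_\infty^*/K)$-equivariant homomorphism with $\tau(w^*)^{\vt^*} \equiv 0$ in $\ov{\cE}^*_\infty/\cI^*\ov{\cE}^*_\infty$; choosing a lift $\beta \in U_{\infty,\fp}^{*} \otimes \bQ$ of $\tau(w^*)$, this says $\vt^*\beta \in \cI^*\ov{\cE}^*_\infty = \vt^*\ov{\cE}^*_\infty$, hence $\vt^*(\beta - e) = 0$ for some $e \in \ov{\cE}^*_\infty$. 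Because $U_{\infty,\fp}^{*} \otimes \bQ$ has no $\vt^*$-torsion --- this is the one point requiring care beyond formal manipulation with the ideal $\cI^*$, and it follows from the structure of $U_{\infty,\fp}^{*}$ as a $\Lambda(\cK_\infty^*)$-module, in parallel with the torsion-freeness of $\ov{\cE}_n$ used above (see \cite[Chapter III, Proposition 1.3]{dS}) and with the construction of $x_{\fp^*}^{(1)}$ in \cite[\S4]{R} --- we conclude $\beta = e \in \ov{\cE}^*_\infty$, so $\tau(w^*) = 0$ and $\tau = 0$. The whole argument is the exact analogue, with the roles of $\fp$ and $\fp^*$ interchanged relative to \cite{R}, of Rubin's construction of the canonical local class; the main obstacle is precisely the absence of $\vt^*$-torsion invoked in the uniqueness step, everything else being bookkeeping with the twisted augmentation ideal.
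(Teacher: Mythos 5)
Your proof is correct and follows essentially the same route as the paper's: both divide $\theta_B(w^*)$ by $\vt^*$, using \eqref{E:r2} for existence of the quotient and the absence of $\vt^*$-torsion in $U^{*}_{\infty,\fp}$ (de Shalit, Chapter III, Proposition 1.3) for uniqueness. The only presentational difference is that you verify Galois-equivariance directly from $\vt^*\bar{\alpha}=0$ together with $\cI^*=\vt^*\Lambda(\cK^*_\infty)$, whereas the paper first builds an auxiliary homomorphism $\sigma_{\fp}^{(0)}\in\Hom(T^*,\ov{\cC}^*_\infty/\cI^*\ov{\cC}^*_\infty)^{\Gal(\cK_\infty^*/K)}$ via transitivity of the Galois action on $T^*-pT^*$ and then applies $\vt^{*-1}$; you also spell out the uniqueness argument, which the paper leaves implicit.
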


\begin{proof} The proof of this result is very similar to that of
  \cite[Theorem 4.2]{R}. We first observe that it follows from
  \eqref{E:ell1} and \eqref{E:ell2} that $\Gal(\cK_{\infty}^{*}/K)$
  acts on $T^*$ and $\ov{\cC}_{\infty}^{*}/ \cI^*
  \ov{\cC}_{\infty}^{*}$ via the same character $\psi^*$. Since
  $\Gal(\cK_{\infty}^{*}/K)$ also acts transitively on $T^* - pT^*$,
  it follows that the map $w^* \mapsto \theta_B(w^*)$ induces a
  well-defined homomorphism $\sigma_{\fp}^{(0)} \in \Hom(T^*,
  \ov{\cC}_{\infty}^{*}/ \cI^*
  \ov{\cC}_{\infty}^{*})^{\Gal(\cK_{\infty}^{*}/K)}$. 

Next, we note that it follows from \cite[Chapter III, Proposition
  1.3]{dS} that $U_{\infty,\fp}^{*}$ contains no $\vt^*$-torsion
elements, and so, if $\alpha \in \ov{\cC}_{\infty}^{*}$, then
$\vt^{-1} \alpha$ is a well-defined element of $U_{\infty,\fp}^{*}
\otimes \bQ$. It is not hard to show that the image of $\vt^{-1}
\alpha$ in $(U_{\infty,\fp}^{*} \otimes \bQ)/ \ov{\cE}^{*}_{\infty}$
depends only upon the image of $\alpha$ in
$\ov{\cC}^{*}_{\infty}/\cI^* \ov{\cC}^{*}_{\infty}$. The map
$\sigma_{\fp}$ is defined by $\sigma_{\fp}:= \vt^{-1} \sigma_{\fp}^{(0)}$.
\end{proof}

\begin{definition}
We set
\begin{equation*}
s_{\fp}:= \rho(\sigma_{\fp}) \in \ch{\ms}_{\fp}(K,T), \quad s_{\fp^*}:=
\rho^*(\sigma_{\fp^*})\in \ch{\ms}_{\fp^*}(K,T^*),
\end{equation*}
where of course the definition $\sigma_{\fp^*} \in
\Hom(T,(U_{\infty,\fp^*} \otimes
\bQ)/\ov{\cE}_{\infty})^{\Gal(\cK_{\infty}/K)}$ is the same, 
\textit{mutatis mutandis}, as that of $\sigma_{\fp}$. \qed
\end{definition}

\begin{remark} \label{R:inforder} We shall see from the proof of Theorem
  \ref{T:nondeg}(a) below that $s_{\fp^*}$ is of infinite order only
  if $\cL'_{\fp}(\psi^*) \neq 0$. \qed
\end{remark}


\section{Proof of Theorem \ref{T:A}} \label{S:special}

In this section we shall establish a number of properties of the
cohomology classes $s_\fp$ and $s_{\fp^*}$ constructed in Section
\ref{S:elliptic}, and thereby prove Theorem \ref{T:A} of the
Introduction. 

We begin by recalling some basic facts concerning derivatives of
elements in Iwasawa algebras (see \cite[Section 7]{R1} or
\cite[Section 2]{A1}, for example). Suppose that $F \in
\Lambda(\fK_{\infty})_{\cO}$, and consider the function
\begin{equation*}
s \mapsto F(\psi^* \langle \psi^* \rangle^s), \qquad s \in \bZ_p;
\end{equation*}
this is a $p$-adic analytic function on $\bZ_p$. For any integer $m
\geq 0$, define
\begin{equation*}
\bD^{*(m)}F(\psi^*) = \frac{1}{m!} \frac{d}{ds} F(\psi^* \langle
\psi^* \rangle^s) \Biggr |_{s=0};
\end{equation*}
then, by definition, we have
\begin{equation*}
\cL'_{\fp}(\psi^*) = \bD^{*(1)} \cL_{\fp}(\psi^*).
\end{equation*}
It is easy to check that
\begin{equation*}
\bD^{*(m)} \vt^{*m}(\psi^*) = p^m
\end{equation*}
for every integer $m \geq 1$.

The following result is \cite[Lemma 7.3]{R1}. It is proved via a
routine computation which we shall omit.

\begin{lemma} \label{L:dertheta}
With notation as above, we have
\begin{equation*}
\bD^{*(m)} (\vt^{*m}F)(\psi^*)  = p^m F(\psi^*)
\end{equation*}
for every integer $m \geq 1$. \qed
\end{lemma}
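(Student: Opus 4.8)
The plan is to carry out the elementary computation that the paper defers, by reducing it to two facts: that evaluation at $\psi^*\langle\psi^*\rangle^s$ is multiplicative, and the Leibniz rule. First I would fix notation: for $G\in\Lambda(\fK_\infty)_{\cO}$ write $\wh{G}(s):=G(\psi^*\langle\psi^*\rangle^s)$ for the associated $p$-adic analytic function of $s\in\bZ_p$, so that $\bD^{*(m)}G(\psi^*)$ is the coefficient of $s^m$ in the Taylor expansion of $\wh{G}$ about $s=0$. The key initial observation is that, for each fixed $s$, the assignment $G\mapsto\wh{G}(s)$ is the continuous $\cO$-algebra homomorphism on $\Lambda(\fK_\infty)_{\cO}$ induced by $\sigma\mapsto(\psi^*\langle\psi^*\rangle^s)(\sigma)$; in particular it is multiplicative, so $\wh{\vt^{*m}F}(s)=\wh{\vt^*}(s)^{m}\cdot\wh{F}(s)$.

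Next I would compute $\wh{\vt^*}(s)$ explicitly. Since $\vt^*=\gamma\psi^*(\gamma^{-1})-1$, where $\gamma$ is a topological generator of $\Gal(\cK^*_\infty/K)$ with $\log_p\psi^*(\gamma)=p$, applying the homomorphism above yields
\[
\wh{\vt^*}(s)=\psi^*(\gamma)\,\langle\psi^*(\gamma)\rangle^{s}\,\psi^*(\gamma)^{-1}-1=\langle\psi^*(\gamma)\rangle^{s}-1=\exp_p(ps)-1,
\]
where $\exp_p$ is the $p$-adic exponential (which converges on $p\bZ_p$ because $p>3$) and we have used that $\log_p$ kills the Teichm\"uller part, so $\log_p\langle\psi^*(\gamma)\rangle=\log_p\psi^*(\gamma)=p$. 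Hence $\wh{\vt^*}(s)=ps+\tfrac12(ps)^2+\cdots$ vanishes to order exactly $1$ at $s=0$, and therefore $\wh{\vt^{*m}}(s)=(\exp_p(ps)-1)^{m}=p^{m}s^{m}+(\text{terms of order}\ \ge m+1)$ vanishes to order exactly $m$ with leading coefficient $p^{m}$; this is exactly the identity $\bD^{*(m)}\vt^{*m}(\psi^*)=p^{m}$ already recorded above.

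Finally I would conclude: $\wh{F}(s)=F(\psi^*)+O(s)$ is analytic at $s=0$, so the product satisfies $\wh{\vt^{*m}}(s)\,\wh{F}(s)=p^{m}F(\psi^*)\,s^{m}+O(s^{m+1})$, whence its coefficient of $s^m$ is $p^{m}F(\psi^*)$. Equivalently, in the Leibniz expansion of $\tfrac{d^m}{ds^m}\bigl(\wh{\vt^{*m}}\,\wh{F}\bigr)\big|_{s=0}$ every term in which fewer than $m$ derivatives land on $\wh{\vt^{*m}}$ vanishes (because $\wh{\vt^{*m}}$ has a zero of order $m$ there), leaving only $\binom{m}{m}\bigl(\wh{\vt^{*m}}\bigr)^{(m)}(0)\,\wh{F}(0)=m!\,p^{m}F(\psi^*)$; dividing by $m!$ gives $\bD^{*(m)}(\vt^{*m}F)(\psi^*)=p^{m}F(\psi^*)$. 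There is essentially no obstacle here, which is why the paper omits it: the only points needing a little care are the removal of the Teichm\"uller part of $\psi^*(\gamma)$ and the convergence of $\exp_p$, both harmless under the standing hypothesis $p>3$. I would expect to spend at most a line each on these.
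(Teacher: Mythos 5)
Your proof is correct. The paper itself does not supply a proof of this lemma; it simply cites \cite[Lemma 7.3]{R1} and remarks that the verification is ``a routine computation which we shall omit.'' What you have written is precisely that omitted routine computation, carried out cleanly: multiplicativity of the evaluation map $G\mapsto G(\psi^*\langle\psi^*\rangle^s)$ reduces the claim to knowing the leading Taylor coefficient of $\wh{\vt^{*m}}(s)$, the explicit formula $\wh{\vt^*}(s)=\exp_p(ps)-1$ (using $\log_p\psi^*(\gamma)=p$ and that $\log_p$ annihilates the Teichm\"uller part) shows that $\wh{\vt^{*m}}$ vanishes to exact order $m$ with leading coefficient $p^m$, and the Leibniz rule then kills every cross term, leaving $p^mF(\psi^*)$. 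Your two cautionary remarks --- convergence of $\exp_p$ on $p\bZ_p$ for $p>3$, and the Teichm\"uller reduction --- are the right points to flag, and both are indeed harmless. This is essentially the same argument Rubin gives; there is no meaningful divergence to report, only that you have supplied the details the present paper chose to suppress.
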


We now turn to the precise relationship between the canonical elements
$s_{\fp}$ and $s_{\fp^*}$, and certain special values of the $p$-adic
L-function $\cL_{\fp}$.

\begin{proposition} \label{P:value}
Let $\ov{\sigma_{\fp}(w^*)} \in U_{\infty,\fp}^{*}$ be any lift of
$\sigma_{\fp}(w^*)$. We have
\begin{equation} \label{E:vali}
\cL_{\fp}(\psi) = \left( 1 - \frac{1}{\psi(\fp^*)} \right) \cdot
\left( 1 - \frac{1}{\psi(\fp)} \right) \cdot 
\Omega_{\fp} \cdot \delta_w(\theta_B(w)).
\end{equation}
and
\begin{equation} \label{E:valii}
\cL'_{\fp}(\psi^*) = -p \cdot \bN(\ff)^{-1} \cdot \left( 1 -
\frac{\psi^*(\fp)}{p} \right) \cdot
\lim_{n \to \infty} \left\{ \sum_{\tau \in \Gal(\cK_n^*/K)} \psi^{*-1}(\tau)
\log_{\fp} ( \ov{\sigma_{\fp}(w^*)}_{n}^{\tau}) \right\}.
\end{equation}
\end{proposition}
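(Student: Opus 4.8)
The plan is to derive both formulas directly from Theorem \ref{T:keylink}, which is the bridge between the elliptic units $\theta_B(w)$, $\theta_B(w^*)$ and the $p$-adic $L$-function $\cL_\fp$. Each of the two displayed identities corresponds to one of the two halves of \eqref{E:key1}, combined with the explicit descriptions \eqref{E:key2} and \eqref{E:key3} of the maps $\iota_w$ and $\iota^*$.

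\textbf{Proof of \eqref{E:vali}.} First I would specialise the identity $(1-\Fr_{\fp^*})\iota_w(\theta_B(w)) = \cL_\fp|_{\cK_\infty}$ from Theorem \ref{T:keylink}(a) at the character $\psi$. Since $\psi$ acts on $\Gal(\cK_\infty/K)$, evaluating a group-ring element at $\psi$ is a ring homomorphism, so $(1-\Fr_{\fp^*})$ contributes the scalar $1 - \psi(\Fr_{\fp^*})$. The Frobenius of $\fp^*$ acts on $E_{\pi^\infty}$ via $\psi$, and one has $\psi(\Fr_{\fp^*}) = \psi(\fp^*)^{-1}$ under the standard identification of $\psi$ with the Grossencharacter (the inverse arising from the arithmetic-vs-geometric Frobenius convention, cf.\ \cite[p.~325]{R}); this gives the factor $(1 - 1/\psi(\fp^*))$. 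Next, applying \eqref{E:key2} with $\beta = \theta_B(w) \in U_{\infty,\fp}$ and then evaluating at $\psi$ turns $\iota_w(\theta_B(w))(\psi)$ into $\bigl(1 - \tfrac{\psi^*(\fp)}{p}\bigr)\cdot \Omega_\fp \cdot \delta_w(\theta_B(w))(\psi)$. The missing ingredient is then a computation of the Euler factor $\bigl(1-\tfrac{\psi^*(\fp)}{p}\bigr)(\psi)$: evaluated at $\psi$ this should collapse to $(1 - 1/\psi(\fp))$, using $\psi\psi^* = \psi\bar\psi = \bN$ on ideals prime to the conductor (so $\psi^*(\fp) = p/\psi(\fp)$), which yields $\psi^*(\fp)/p \cdot$(value at $\psi$)$ = 1/\psi(\fp)$. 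Multiplying the three contributions gives \eqref{E:vali}.

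\textbf{Proof of \eqref{E:valii}.} Here I would start from the second identity in \eqref{E:key1}, namely $\iota^*(\theta_B(-\bN(\ff)^{-1}w^*)) = \cL_\fp|_{\cK^*_\infty}$, and apply the derivation operator $\bD^{*(1)}$ rather than evaluation at $\psi^*$, since $\psi^*$ lies outside the range of interpolation. By Proposition \ref{P:canhom} we have $\sigma_\fp(w^*)^{\vt^*} = \theta_B(w^*)$ in $\ov{\cE}^*_\infty/\cI^*\ov{\cE}^*_\infty$, so $\theta_B(w^*) = (\ov{\sigma_\fp(w^*)})^{\vt^*} \bmod \cI^*\ov{\cE}^*_\infty$ for any lift; by $\bZ_p$-linearity of $\theta_B$ in its argument, $\theta_B(-\bN(\ff)^{-1}w^*) = -\bN(\ff)^{-1}(\ov{\sigma_\fp(w^*)})^{\vt^*}$ up to the same ambiguity. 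Now I apply Lemma \ref{L:dertheta} with $m=1$ and $F = \iota^*(-\bN(\ff)^{-1}\ov{\sigma_\fp(w^*)})$: since $\iota^*$ is Galois-equivariant and hence $\Lambda$-linear, $\iota^*$ commutes with multiplication by $\vt^*$, so $\iota^*(\theta_B(-\bN(\ff)^{-1}w^*)) = \vt^* F$, and Lemma \ref{L:dertheta} gives $\bD^{*(1)}(\vt^* F)(\psi^*) = p \cdot F(\psi^*)$. It remains to evaluate $F(\psi^*) = -\bN(\ff)^{-1}\,\iota^*(\ov{\sigma_\fp(w^*)})(\psi^*)$ using \eqref{E:key3} with $u = \ov{\sigma_\fp(w^*)}$, which produces exactly $-\bN(\ff)^{-1}\bigl(1-\tfrac{\psi^*(\fp)}{p}\bigr)\cdot \lim_n \sum_\tau \psi^{*-1}(\tau)\log_\fp(\ov{\sigma_\fp(w^*)}_n^\tau)$. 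Multiplying by $p$ yields \eqref{E:valii}. I should also check that the coset ambiguities modulo $\cI^*\ov{\cE}^*_\infty$ (respectively $\cI\ov{\cE}_\infty$ for \eqref{E:vali}) are killed after applying $\bD^{*(1)}$ and evaluating at $\psi^*$ (respectively after evaluating at $\psi$)---this follows since $\iota^*$ maps $\cI^*\ov{\cE}^*_\infty$ into $\cI^*\Lambda(\cK^*_\infty)_\cO$, and elements of $\cI^*$ vanish at $\psi^*$ while their first derivatives $\bD^{*(1)}$ are scalar multiples of the value, which is zero.

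\textbf{Main obstacle.} The genuinely delicate point is the bookkeeping of the various Euler-type factors and the Frobenius conventions: matching $\psi(\Fr_{\fp^*})$ with $1/\psi(\fp^*)$, and simplifying $(1-\psi^*(\fp)/p)(\psi) = (1-1/\psi(\fp))$, requires care with which normalisation of the Grossencharacter (arithmetic vs.\ geometric Frobenius, and the interpolation formula \eqref{E:interpolate}) is in force, and with the identity $\psi(\fp)\psi^*(\fp^*) \neq \psi(\fp)\psi(\fp^*)$ versus $\psi(\fp)\psi^*(\fp) = p$. Everything else is a formal substitution into Theorem \ref{T:keylink} together with Lemma \ref{L:dertheta} and Proposition \ref{P:canhom}; the only real risk is a sign or an inverted Euler factor, so I would verify the final formulas against the known special case in \cite[Corollary 9.2, Theorem 10.1]{R} as a consistency check.
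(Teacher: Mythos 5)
For \eqref{E:vali} your derivation is essentially the paper's: the paper simply says this "follows directly from \eqref{E:key1} and \eqref{E:key2}," and your Euler-factor bookkeeping ($\psi(\Fr_{\fp^*})=1/\psi(\fp^*)$, and $\psi^*(\fp)/p=1/\psi(\fp)$ since $\psi\psi^*=\bN$ with $\fp\nmid\ff$) fills in the intended calculation correctly. Note, though, that there is no "coset ambiguity modulo $\cI\ov{\cE}_\infty$" to worry about here: $\theta_B(w)$ is an actual element of $\ov{\cC}_\infty$, not a coset.

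For \eqref{E:valii} the skeleton is right (apply $\bD^{*(1)}$ to the second identity of \eqref{E:key1}, use Lemma~\ref{L:dertheta} with $m=1$, then \eqref{E:key3}), but the treatment of the lift-independence is where your argument breaks down, and this is in fact the genuine content of the proof. You work directly with an arbitrary lift $\ov{\sigma_\fp(w^*)}$, observe that $\ov{\sigma_\fp(w^*)}^{\vt^*}$ only recovers $\theta_B(w^*)$ modulo $\cI^*\ov{\cE}^*_\infty$, and then assert that the resulting discrepancy $\delta\in\iota^*(\cI^*\ov{\cE}^*_\infty)\subseteq\cI^*\Lambda(\cK^*_\infty)_\cO$ dies under $\bD^{*(1)}(\cdot)(\psi^*)$ because "elements of $\cI^*$ vanish at $\psi^*$ while their first derivatives $\bD^{*(1)}$ are scalar multiples of the value, which is zero." As stated this is false --- for example $\vt^*\in\cI^*$ has $\vt^*(\psi^*)=0$ but $\bD^{*(1)}\vt^*(\psi^*)=p\neq 0$. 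What you would actually need is $\delta\in\cI^{*2}\Lambda$, i.e.\ that $\iota^*(\ov{\cE}^*_\infty)\subseteq\cI^*\Lambda(\cK^*_\infty)_\cO$ (equivalently, that the limit $\lim_n\sum_\tau\psi^{*-1}(\tau)\log_\fp(u_n^\tau)$ vanishes for every $u\in\ov{\cE}^*_\infty$). That statement is exactly the assertion that the right-hand side of \eqref{E:valii} is independent of the chosen lift, so as written your justification is circular: it assumes the well-posedness it is meant to establish.

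The paper avoids this by a different route. It first introduces the \emph{exact} element $\beta\in U^*_{\infty,\fp}\otimes\bQ$ with $\beta^{\vt^*}=\theta_B(w^*)$ (using that $U^*_{\infty,\fp}$ has no $\vt^*$-torsion, \cite[Ch.~III, Prop.~1.3]{dS}), so that $\cL_\fp|_{\cK^*_\infty}=-\bN(\ff)^{-1}\vt^*\iota^*(\beta)$ holds on the nose and Lemma~\ref{L:dertheta} applies with no ambiguity, giving \eqref{E:beta}. It then proves \emph{separately}, as \eqref{E:finaltouch}, that the twisted log-sums for $\beta$ and for any lift $\ov{\sigma_\fp(w^*)}$ coincide; this is done at finite level using the congruence $\vt^*_n\sum_{\tau}\psi^{*-1}(\tau)\tau\equiv -(p-1)p^n\ \pmod{p^n\cI^*\bZ_p[\Gal(\cK^*_n/K)]}$ from \cite[Lemma~6.3]{R1}, dividing by $\vt^*$ (again using $\vt^*$-torsion-freeness of $\ov{\cE}^*_n$), taking $\fp$-adic logarithms and passing to the limit. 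You should either reproduce this finite-level congruence argument or independently prove $\iota^*(\ov{\cE}^*_\infty)(\psi^*)=0$; citing consistency with \cite[Cor.~9.2, Thm.~10.1]{R} is a sanity check, not a proof.
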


\begin{proof} The equality \eqref{E:vali} follows directly from \eqref{E:key1}
and \eqref{E:key2}. To prove \eqref{E:valii}, we first observe that,
since there is no $\vt^*$-torsion in $U^{*}_{\infty,\fp}$ (see
\cite[Chapter III, Proposition 1.3]{dS}), there exists a unique
$\beta=[\beta_n] \in U^{*}_{\infty,\fp} \otimes_{\bZ} \bQ$ such that
\begin{equation*}
\beta^{\vt^*} = \theta_B(w^*).
\end{equation*}
Applying Theorem \ref{T:keylink}(a), we see that
\begin{align*}
- \vt^* \bN(\ff)^{-1} \iota^*(\beta) &=
\iota^*(\theta_B(\bN(\ff)^{-1}w^*) \\
&= \cL_{\fp}|_{\cK^{*}_{\infty}}.
\end{align*}
It therefore follows from Lemma \ref{L:dertheta} and Theorem
\ref{T:keylink}(c) that
\begin{align} \label{E:beta}
\cL'_{\fp}(\psi^*) &= -\bN(\ff)^{-1} \iota^*(\beta)(\psi^*) \notag \\ 
&=
-p \cdot \bN(\ff)^{-1} \cdot \left( 1 -
\frac{\psi^*(\fp)}{p} \right) \cdot
\lim_{n \to \infty} \left\{ \sum_{\tau \in \Gal(\cK_n^*/K)} \psi^{*-1}(\tau)
\log_{\fp} ( \beta_{n}^{\tau}) \right\}.
\end{align}

Now suppose that $\ov{\sigma_{\fp}(w^*)} \in U_{\infty,\fp}^{*}$ is
any lift of $\sigma_{\fp}(w^*)$. It follows from Proposition
\ref{P:canhom} that we have
\begin{equation} \label{E:betasigma}
\sigma_{\fp}(w^*)^{\vt^*} = \theta_B(w^*) = \beta^{\vt^*}
\end{equation}
in $\ov{\cE}^*_\infty/ \cI^* \ov{\cE}^*_\infty$. For each integer $n
\geq 1$, let $\vt_n^*$ denote the projection of $\vt$ to
$\bZ_p[\Gal(cK_n^*/K)]$. It is shown in \cite[Lemma 6.3]{R1} that
\begin{equation*}
\vt^*_n \sum_{\tau \in \Gal(\cK_n/K)} \psi^{-1}(\tau) \tau
\equiv -(p-1)p^n \pmod{p^n \cI \bZ_p[\Gal(\cK^*_n/K)]}.
\end{equation*}
We therefore deduce from \eqref{E:betasigma} that, for each $n \geq
1$, we have (writing Galois action additively)
\begin{equation*}
\vt^* \left( \sum_{\tau \in \Gal(\cK^*_n/K)} \psi^{*-1}(\tau) \tau
\right) \beta_n \equiv
\vt^* \left( \sum_{\tau \in \Gal(\cK^*_n/K)} \psi^{*-1}(\tau) \tau
\right) \ov{\sigma_{\fp}(w^*)}  \pmod{p^n \vt^* \ov{\cE}^*_n}.
\end{equation*}
Since $\ov{\cE}^*_n$ has no $\vt^*$-torsion (see \cite[Chapter III,
  Proposition 1.3]{R1}), we can divide both sides of this last
relation by $\vt^*$, and then take $\fp$-adic logarithms and let $n
\to \infty$ to deduce that
\begin{equation} \label{E:finaltouch}
\lim_{n \to \infty} \left\{ \sum_{\tau \in \Gal(\cK_n^*/K)} \psi^{*-1}(\tau)
\log_{\fp} ( \beta_{n}^{\tau}) \right\} 
=
\lim_{n \to \infty} \left\{ \sum_{\tau \in \Gal(\cK_n^*/K)} \psi^{*-1}(\tau)
\log_{\fp} ( \ov{\sigma_{\fp}(w^*)}_{n}^{\tau}) \right\}.
\end{equation}
The desired equality \eqref{E:valii} now follows from \eqref{E:beta}
and \eqref{E:finaltouch}.
\end{proof}

\begin{theorem} \label{T:infinite}
We have
\begin{align*}
\exp_{\fp}^{*}(s_{\fp^*}) &= \left( \frac{\psi^*(\fp)}{p} - 1
\right) \cdot \delta_w (\theta_B(w)) \\
&= \left( \frac{\psi^*(\fp)}{p} - 1 \right) \cdot \left( 1-
\frac{1}{\psi(\fp^*)} \right)^{-1} \cdot
\left( 1 - \frac{1}{\psi(\fp)} \right)^{-1} \cdot
\frac{\cL_{\fp}(\psi)}{\Omega_{\fp}}.  
\end{align*}
Hence, if $\cL_{\fp}(\psi) \neq 0$, then $s_{\fp^*}$ is of infinite
order.
\end{theorem}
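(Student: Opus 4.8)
The plan is to obtain Theorem \ref{T:infinite} as a corollary of Propositions \ref{P:kappacup}, \ref{P:pain} and \ref{P:value}: the substantive input (Coleman power series, Wiles's explicit reciprocity law, and the link between $\delta_w$ and $\cL_\fp$) is already packaged in those statements, so what remains is essentially assembly. Recall $s_{\fp^*} = \rho^*(\sigma_{\fp^*})$, so that $\loc_\fp(s_{\fp^*}) \in H^1(K_\fp,T^*)$; the first thing I would check is the standing hypothesis of Propositions \ref{P:kappacup} and \ref{P:pain}, namely that $\kappa^*(\loc_\fp(s_{\fp^*})) \in U_{\infty,\fp}$. This is immediate from the congruence \eqref{E:kappacong2} taken with $\xi^* = \sigma_{\fp^*}$, which exhibits each component $\kappa^*(s_{\fp^*})_n$ as the class of a global unit, hence of a $\fp$-adic unit.

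Granting this, the computation is short. Proposition \ref{P:kappacup}, applied with $c^* = \loc_\fp(s_{\fp^*})$, gives
$$
\exp_\fp^*(s_{\fp^*}) = \left(\frac{\psi^*(\fp)}{p}-1\right)\cdot\delta_w(\kappa^*(s_{\fp^*})).
$$
Next, Proposition \ref{P:pain} with $\xi^* = \sigma_{\fp^*}$ yields $\delta_w(\kappa^*(s_{\fp^*})) = \delta_w(\sigma_{\fp^*}(w)^{\vt})$; combining this with the $\fp^*$-analogue (via Remark \ref{R:switch}) of the defining relation in Proposition \ref{P:canhom}, i.e. $\sigma_{\fp^*}(w)^{\vt} = \theta_B(w)$ in $\ov{\cE}_\infty/\cI\ov{\cE}_\infty$, and noting that $\theta_B(w) \in \ov{\cC}_\infty \subseteq \ov{\cE}_\infty$ is a lift of this class, we conclude $\delta_w(\kappa^*(s_{\fp^*})) = \delta_w(\theta_B(w))$. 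This is the first displayed equality of the theorem; the second is then merely the rearrangement
$$
\delta_w(\theta_B(w)) = \left(1-\frac{1}{\psi(\fp^*)}\right)^{-1}\left(1-\frac{1}{\psi(\fp)}\right)^{-1}\frac{\cL_\fp(\psi)}{\Omega_\fp}
$$
of equation \eqref{E:vali} of Proposition \ref{P:value}.

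For the final assertion, recall that $\exp_\fp^*\colon H^1(K_\fp,T^*)\to\bQ_p$ is a homomorphism into a torsion-free group, so it kills every torsion class; thus if $s_{\fp^*}$ had finite order we would have $\exp_\fp^*(s_{\fp^*}) = 0$. Since the scalars $\psi^*(\fp)/p - 1$, $1-1/\psi(\fp^*)$, $1-1/\psi(\fp)$ and $\Omega_\fp$ are all nonzero (as is clear from the definitions), the hypothesis $\cL_\fp(\psi)\neq 0$ forces $\exp_\fp^*(s_{\fp^*})\neq 0$, so $s_{\fp^*}$ is non-torsion; and since $\ch{\ms}_{\fp^*}(K,T^*)$ is a free $O_{K,\fp^*}$-module, a non-torsion element has infinite order. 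The one place where some care is needed — and which I regard as the main (if modest) obstacle — is confirming that Propositions \ref{P:kappacup} and \ref{P:pain}, whose statements are phrased for classes arising from integral homomorphisms, apply verbatim to $s_{\fp^*}$, which is built from the rational homomorphism $\sigma_{\fp^*}$; this comes down to the $\bZ_p$-linearity of $\kappa^*$ and $\delta_w$ together with the integrality of $\sigma_{\fp^*}(w)^{\vt} = \theta_B(w)$.
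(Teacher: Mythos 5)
Your proposal is correct and follows the same route as the paper, whose proof is the single line ``This follows from Propositions \ref{P:pain}, \ref{P:kappacup} and \ref{P:value}''; you have simply unpacked that chain (together with the defining relation in Proposition \ref{P:canhom} applied via Remark \ref{R:switch}) in the expected order. The integrality point you flag at the end --- that Propositions \ref{P:kappacup} and \ref{P:pain} are stated for $\xi^*$ integral while $\sigma_{\fp^*}$ lives in the $\otimes\,\bQ$ version --- is a real, if minor, gap in the paper's exposition, and your resolution via the integrality of $\sigma_{\fp^*}(w)^{\vt}=\theta_B(w)$ is the right one.
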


\begin{proof} This follows from Propositions \ref{P:pain},
  \ref{P:kappacup} and \ref{P:value}.
\end{proof}



We can now prove the first part of Theorem \ref{T:A}.

\begin{theorem} \label{T:nondeg}
Suppose that $\cL_{\fp}(\psi) \neq 0$. 

(a) We have $\cL'_{\fp}(\psi^*) \neq 0$.
\smallskip

(b) The height pairing $[\,,\,]_{K,\fp}$ is non-degenerate, and we
have
\begin{equation*}
[s_{\fp},s_{\fp^*}]_{K,\fp^*} = 
(p-1) \cdot p^{-1} \cdot  \bN(\ff)^{-1} \cdot \left( 1 - \frac{1}{\psi(\fp^*)}
\right)^{-1} \cdot \left( 1 - \frac{1}{\psi(\fp)} \right)^{-1} \cdot
\cL_{\fp}(\psi) \cdot \cL'_{\fp}(\psi^*). 
\end{equation*}
\end{theorem}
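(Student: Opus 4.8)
The plan is to obtain part~(b) by specialising Theorem~\ref{T:genht}(b) to the canonical homomorphisms $\xi=\sigma_{\fp}$ and $\xi^*=\sigma_{\fp^*}$, and then rewriting the resulting factors by means of Proposition~\ref{P:canhom} and Proposition~\ref{P:value}; part~(a) and the non-degeneracy assertion then follow by combining part~(b) with Theorem~\ref{T:infinite} and its $\fp\leftrightarrow\fp^*$ counterpart (Remark~\ref{R:switch}).

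First I would apply Theorem~\ref{T:genht}(b) with $\xi=\sigma_{\fp}$ and $\xi^*=\sigma_{\fp^*}$, which expresses $[s_{\fp},s_{\fp^*}]_{K,\fp^*}$ as $(p-1)\cdot(\psi^*(\fp)/p-1)\cdot\delta_w(\sigma_{\fp^*}(w)^{\vt})\cdot\Omega_{\fp}\cdot L$, where $L$ denotes the limit of the Galois sums $\sum_{\sigma}\psi^*(\sigma^{-1})\log_{\fp}(\overline{\sigma_{\fp}(w^*)}_{n}^{\sigma})$. The $\fp\leftrightarrow\fp^*$ form of Proposition~\ref{P:canhom} gives $\sigma_{\fp^*}(w)^{\vt}=\theta_B(w)$ in $\overline{\cE}_\infty/\cI\overline{\cE}_\infty$, so $\delta_w(\sigma_{\fp^*}(w)^{\vt})=\delta_w(\theta_B(w))$, and then \eqref{E:vali} identifies $\Omega_{\fp}\,\delta_w(\theta_B(w))$ with $(1-1/\psi(\fp^*))^{-1}(1-1/\psi(\fp))^{-1}\cL_{\fp}(\psi)$. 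On the other hand, $L$ is exactly the Galois sum appearing in \eqref{E:valii} (the one that was matched, in the proof of Proposition~\ref{P:value}, with the sum attached to $\sigma_{\fp}(w^*)$), so \eqref{E:valii} rewrites $L$ as an explicit nonzero multiple of $\cL'_{\fp}(\psi^*)$. Substituting, and using $\psi^*(\fp)/p=1/\psi(\fp)$ to cancel the factor $(\psi^*(\fp)/p-1)$ against one of the factors $(1-1/\psi(\fp))^{-1}$ up to sign, yields the identity of part~(b). This step is bookkeeping; the care required is to match the Galois sums and the normalisations of $\vt$ and $\vt^*$, and to track the powers of $\bN(\ff)$ and the Galois twists carried by $\Omega_{\fp}$ and by $\cO$, so that the overall constant emerges as $(p-1)p^{-1}\bN(\ff)^{-1}(1-1/\psi(\fp^*))^{-1}(1-1/\psi(\fp))^{-1}$.

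For part~(a) and non-degeneracy: since $\cL_{\fp}(\psi)\neq 0$, Theorem~\ref{T:infinite} shows $s_{\fp^*}$ is of infinite order, and its $\fp\leftrightarrow\fp^*$ counterpart likewise shows $s_{\fp}$ is of infinite order. Because $r=0$ forces $|r-1|=1$, the modules $\ch{\ms}_{\fp}(K,T)$ and $\ch{\ms}_{\fp^*}(K,T^*)$ are free of rank one over $O_{K,\fp^*}$, so $s_{\fp}$ and $s_{\fp^*}$ each generate a submodule of finite index; hence $[\,,\,]_{K,\fp^*}$ is non-degenerate if and only if $[s_{\fp},s_{\fp^*}]_{K,\fp^*}\neq 0$, which by part~(b) is equivalent to $\cL'_{\fp}(\psi^*)\neq 0$. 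Thus everything reduces to showing $\cL'_{\fp}(\psi^*)\neq 0$, and this I would deduce from the infinite order of $s_{\fp^*}$ by tracing the construction of $\sigma_{\fp^*}$ through Proposition~\ref{P:canhom} and the explicit-reciprocity computation underlying Proposition~\ref{P:wiles} (equivalently, the proof of \eqref{E:valii}): this shows that if $s_{\fp^*}$ has infinite order, then the Galois sum controlling $\cL'_{\fp}(\psi^*)$ cannot vanish, which is the statement flagged in Remark~\ref{R:inforder}.

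The hard part will be precisely this last implication. The infinite order of $s_{\fp^*}$ only says that $\sigma_{\fp^*}(w)$ is nontrivial in a quotient of a local unit group modulo global units, whereas $\cL'_{\fp}(\psi^*)$ is pinned down by a specific $p$-adic logarithmic expression; linking the two requires exploiting the $\psi$-isotypic transformation law of $\sigma_{\fp^*}(w)$ under $\Gal(\cK_\infty/K)$ (coming from the Galois-equivariance built into $\rho^*$), together with the injectivity of the $p$-adic logarithm on principal local units, so that nontriviality is not lost on passing to logarithms. The formal parts---the substitutions in part~(b) and the reduction of non-degeneracy to $[s_{\fp},s_{\fp^*}]_{K,\fp^*}\neq 0$---should be routine.
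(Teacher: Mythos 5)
Your treatment of part~(b) is essentially the paper's: plug $\xi=\sigma_{\fp}$, $\xi^*=\sigma_{\fp^*}$ into Theorem~\ref{T:genht}(b), identify $\delta_w(\sigma_{\fp^*}(w)^{\vt})=\delta_w(\theta_B(w))$ via the $\fp\leftrightarrow\fp^*$ form of Proposition~\ref{P:canhom}, and substitute the two formulas \eqref{E:vali} and \eqref{E:valii} of Proposition~\ref{P:value}. That is exactly the route the paper takes (it cites precisely Theorem~\ref{T:genht}(b), Proposition~\ref{P:value}, and Theorem~\ref{T:infinite} for the identity, and the freeness of the two $\ch{\ms}$-modules for the non-degeneracy step). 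So no complaints there.

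For part~(a), however, there is a genuine gap. After correctly reducing non-degeneracy (given~(b)) to the single assertion $\cL'_{\fp}(\psi^*)\neq 0$, you propose to deduce this from the infinite order of $s_{\fp^*}$ ``by tracing the construction of $\sigma_{\fp^*}$ through Proposition~\ref{P:canhom} and the explicit-reciprocity computation,'' invoking the $\psi$-isotypic transformation law and the injectivity of $\log_{\fp}$. But injectivity of the logarithm on local units does not prevent a Galois sum $\sum_{\tau}\psi^{*-1}(\tau)\log_{\fp}(u_n^{\tau})$ from vanishing by cancellation, and nothing in the eigenspace structure alone rules this out; this is precisely the nontrivial implication being asserted, and you have not supplied a mechanism for it. Moreover, Remark~\ref{R:inforder} is explicitly deferred to the proof of Theorem~\ref{T:nondeg}(a), so citing it here would be circular. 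The paper's actual argument is different and is an Iwasawa-module \emph{order-of-vanishing} argument: if $\cL'_{\fp}(\psi^*)=0$, then $t:=\ord_{s=1}L_{\fp}^{*}(s)\geq 2$, so by \eqref{E:key1} the elliptic-unit system satisfies $\ov{\cC}_{\infty}^{*}\subseteq \cI^{*t}(U_{\infty,\fp}^{*}\otimes\bQ)$; since the canonical homomorphism is constructed by dividing $\theta_B(w^*)$ by $\vt^*$ exactly once (Proposition~\ref{P:canhom}), its image $\sigma_{\fp}(w^*)$ then lies in $\cI^{*(t-1)}(U_{\infty,\fp}^{*}\otimes\bQ)\cdot\ov{\cE}^{*}_{\infty}$ with $t-1\geq 1$, forcing the corresponding canonical Selmer class to vanish, which contradicts its infinite order guaranteed by Theorem~\ref{T:infinite} (using $\cL_{\fp}(\psi)\neq 0$ and the $\fp\leftrightarrow\fp^*$ symmetry of Remark~\ref{R:switch}). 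This filtration step is the missing key idea; you should replace the heuristic about logarithm injectivity with it.
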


\begin{proof}
(a) Suppose that $\cL'_{\fp}(\psi^*) = 0$, and set $t = \ord_{s=1}
\cL_{\fp}^{*}(s)$, so $t \geq 2$. Then it follows from \eqref{E:key1} that
\begin{equation*} 
\ov{\cC}_{\infty}^{*} \subseteq \cI^{*t}(U_{\infty,\fp}^{*} \otimes
\bQ).
\end{equation*}
Hence we have that $\sigma_{\fp^*}(w^*) \in \cI^{*(t-1)}(U_{\infty,\fp}^{*}
\otimes \bQ) \cdot \ov{\cE}^{*}_{\infty}$, and this in turn implies
that $s_{\fp^*} = 0$. This contradicts Theorem \ref{T:infinite}
because by hypothesis $\cL_{\fp}(\psi) \neq 0$. It therefore follows
that $\cL'_{\fp}(\psi^*) \neq 0$, as claimed.
\smallskip
 
(b) The equality follows directly from Theorem \ref{T:genht}(b),
Proposition \ref{P:value}, and Theorem \ref{T:infinite}. As
$[s_{\fp},s_{\fp^*}]_{K,\fp^*} \neq 0$, we deduce that
$[\,,\,]_{K,\fp^*}$ is non-degenerate because $\ch{\ms}_{\fp}(K,T)$
and $\ch{\ms}_{\fp^*}(K,T^*)$ are both free, rank one $\bZ_p$-modules
(see \cite[Lemma 3.6 and Proposition 6.7]{A1}).
\end{proof}

The second part of Theorem \ref{T:A} is a direct consequence of the
following result.

\begin{theorem} \label{T:quot}
(cf. \cite[Theorem 10.1]{R})

Suppose that $\cL_{\fp}(\psi) \neq 0$, and that $y \in
\ch{\ms}_{\fp}(K,T)$ and $y^* \in \ch{\ms}_{\fp^*}(K,T^*)$ are both
non-zero.  Then
\begin{align*}
\frac{\exp_{\fp}^{*}(y^*) \cdot
  \exp_{\fp^*}^{*}(y)}{[y,y^*]_{K,\fp^*}} &= 
(p-1)^{-1} \cdot p \cdot \bN(\ff) \cdot \left( \frac{\psi^*(\fp)}{p} -1
  \right) \cdot \left( \frac{\psi(\fp^*)}{p} -1 \right) \\
&\times 
\frac{\cL_{\fp}(\psi)}{(1-\psi^{*-1}(\fp)) \cdot (1
  -\psi^{-1}(\fp^*)) \cdot \Omega_{\fp^*} \cdot \Omega_{\fp} \cdot
  \cL'_{\fp}(\psi^*)}.
\end{align*}
\end{theorem}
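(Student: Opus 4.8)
The plan is to reduce the asserted identity to the case $y=s_{\fp}$, $y^*=s_{\fp^*}$ of the canonical elements, and then to assemble the formulas already established for those elements.

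First I would show that the left-hand side does not depend on the choice of the non-zero elements $y$ and $y^*$. By \cite[Lemma 3.6 and Proposition 6.7]{A1} the modules $\ch{\ms}_{\fp}(K,T)$ and $\ch{\ms}_{\fp^*}(K,T^*)$ are free of rank one over $\bZ_p$; since $\cL_{\fp}(\psi)\neq 0$, Theorem \ref{T:nondeg}(b) shows that $[\,,\,]_{K,\fp^*}$ is non-degenerate, and hence, by a standard argument for non-degenerate pairings on free rank-one modules, $[y,y^*]_{K,\fp^*}\neq 0$ for all non-zero $y,y^*$. If $y'$ is another non-zero element of $\ch{\ms}_{\fp}(K,T)$, write $ay'=by$ with $a,b\in\bZ_p\setminus\{0\}$; using that $\exp_{\fp^*}^{*}$ and $[\,,\,]_{K,\fp^*}$ are $\bZ_p$-linear in the relevant variable, one gets $\exp_{\fp^*}^{*}(y')/[y',y^*]_{K,\fp^*}=\exp_{\fp^*}^{*}(y)/[y,y^*]_{K,\fp^*}$, and likewise in the second variable. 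By Theorem \ref{T:infinite} and Theorem \ref{T:nondeg} the elements $s_{\fp}$ and $s_{\fp^*}$ are non-zero (this is where the hypothesis $\cL_{\fp}(\psi)\neq 0$ enters), so it suffices to prove the identity for $y=s_{\fp}$, $y^*=s_{\fp^*}$.

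I would then evaluate the three quantities $\exp_{\fp}^{*}(s_{\fp^*})$, $\exp_{\fp^*}^{*}(s_{\fp})$ and $[s_{\fp},s_{\fp^*}]_{K,\fp^*}$ separately. The first is supplied by Theorem \ref{T:infinite} and the last by Theorem \ref{T:nondeg}(b), both already expressed in terms of $\cL_{\fp}(\psi)$ (and, for the latter, $\cL'_{\fp}(\psi^*)$). For $\exp_{\fp^*}^{*}(s_{\fp})$ I would run the $\fp\leftrightarrow\fp^*$ analogues of Proposition \ref{P:kappacup}, Proposition \ref{P:pain} and Proposition \ref{P:value} (cf. Remark \ref{R:switch}); this is the switched form of Theorem \ref{T:infinite}, and it gives $\exp_{\fp^*}^{*}(s_{\fp})$ as an explicit factor times $\cL_{\fp^*}(\psi^*)/\Omega_{\fp^*}$.

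Substituting these three expressions into the left-hand side, the terms involving $\Omega_{\fp}$, $\Omega_{\fp^*}$, $\cL_{\fp}(\psi)$ and the various Euler-type factors combine almost directly into the claimed formula, once one uses the norm relations $\psi(\fp)\psi^*(\fp)=\psi(\fp^*)\psi^*(\fp^*)=p$ to pass freely between factors such as $1-\psi(\fp^*)^{-1}$ and $\psi^*(\fp^*)/p-1$. The one remaining input is a comparison of $\cL_{\fp^*}(\psi^*)$ with $\cL_{\fp}(\psi)$: both characters lie within the range of interpolation of the respective Katz $L$-functions, so this follows from the interpolation formula \eqref{E:interpolate} together with the functional equation of the complex Hecke $L$-function, or directly from the functional equation of $\cL_{\fp}$ (see \cite[\S6]{dS}), the net effect being that $\cL_{\fp^*}(\psi^*)$ differs from $\cL_{\fp}(\psi)$ by an explicit ratio of Euler-type factors at the primes above $p$. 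I expect that this last comparison — tracking all the Euler factors correctly — is the only genuinely delicate point; the remainder is routine assembly.
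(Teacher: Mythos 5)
Your proposal follows essentially the same route as the paper: reduce to $y=s_{\fp}$, $y^*=s_{\fp^*}$ via the fact that $\ch{\ms}_{\fp}(K,T)$ and $\ch{\ms}_{\fp^*}(K,T^*)$ are free of rank one over $\bZ_p$, then assemble Theorem~\ref{T:infinite}, its $\fp\leftrightarrow\fp^*$ twin (giving $\exp_{\fp^*}^{*}(s_{\fp})$ in terms of $\cL_{\fp^*}(\psi^*)$), and Theorem~\ref{T:nondeg}(b). Your justification of the independence of the quotient from the choice of $y$, $y^*$ (via $ay'=by$ and $\bZ_p$-linearity) is a mild variant of the paper's phrasing — the paper instead observes that $\Hom(\ch{\ms}_{\fp}(K,T)\otimes_{\bZ_p}\ch{\ms}_{\fp^*}(K,T^*),\bZ_p)$ is free of rank one and that both $\exp_{\fp^*}^{*}(\cdot)\exp_{\fp}^{*}(\cdot)$ and $[\,,\,]_{K,\fp^*}$ are non-zero elements of it — but the two arguments are interchangeable.

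The one place where you diverge, and where you should be more careful, is the comparison of $\cL_{\fp^*}(\psi^*)$ with $\cL_{\fp}(\psi)$. You predict a discrepancy ``by an explicit ratio of Euler-type factors at the primes above $p$.'' The paper instead invokes the plain identity $\cL_{\fp}(\psi)=\cL_{\fp^*}(\psi^*)$, justified simply by the $\fp\leftrightarrow\fp^*$ symmetry of the entire construction (Remark~\ref{R:switch}); both characters sit in the respective ranges of interpolation and correspond to the same complex $L$-value, so no Euler-factor correction is needed or intended. Appealing to the functional equation of $\cL_{\fp}$ (which is an internal symmetry of a single Katz $L$-function, not a relation between $\cL_{\fp}$ and $\cL_{\fp^*}$) is not what is going on here. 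If you carry out the bookkeeping you will find that the clean identity $\cL_{\fp}(\psi)=\cL_{\fp^*}(\psi^*)$ is exactly what is used to pass from the expression coming out of Theorems~\ref{T:infinite} and~\ref{T:nondeg} (which has $\cL_{\fp^*}(\psi^*)$ in the numerator) to the statement of Theorem~\ref{T:quot}.
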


\begin{proof}
Since $\cL_{\fp}(\psi) \neq 0$, it follows that
\begin{equation*}
\Hom( \ch{\ms}_{\fp}(K,T) \otimes_{\bZ_p} \ch{\ms}_{\fp^*}(K,T^*),
\bZ_p)
\end{equation*}
is a free $\bZ_p$-module of rank one (see \cite[Lemma 3.6 and
  Proposition 6.7]{A1}), and that both $\exp_{\fp^*}^{*}(\,\,) \cdot
\exp_{\fp}^{*}(\,\,)$ and $[\,,\,]_{K,\fp^*}$ are non-zero elements of
this module. Hence, since $s_{\fp}$ and $s_{\fp^*}$ are of infinite
order, we have that
\begin{equation*}
\frac{\exp_{\fp^*}^{*}(y) \cdot
  \exp_{\fp}^{*}(y^*)}{[y,y^*]_{K,\fp^*}} =
\frac{\exp_{\fp^*}^{*}(s_{\fp}) \cdot
  \exp_{\fp}^{*}(s_{\fp^*})}{[s_{\fp}, s_{\fp^*}]_{K,\fp^*}}.
\end{equation*}
Applying Theorems \ref{T:infinite} and \ref{T:nondeg}, we see that
\begin{align*}
\frac{\exp_{\fp^*}^{*}(s_{\fp}) \cdot
  \exp_{\fp}^{*}(s_{\fp^*})}{[s_{\fp}, s_{\fp^*}]_{K,\fp^*}} &= 
(p-1)^{-1} \cdot p \cdot \bN(\ff) \cdot \left( \frac{\psi^*(\fp)}{p} -1
  \right) \cdot \left( \frac{\psi(\fp^*)}{p} -1 \right) \\
&\times
\frac{\cL_{\fp^*}(\psi^*)}{(1-\psi^{*-1}(\fp)) \cdot (1
  -\psi^{-1}(\fp^*)) \cdot \Omega_{\fp^*} \cdot \Omega_{\fp} \cdot
  \cL'_{\fp}(\psi^*)}.
\end{align*}
The desired result now follows from the fact that
$\cL_{\fp}(\psi)=\cL_{\fp^*}(\psi^*)$ (cf. Remark \ref{R:switch}).
\end{proof}

This completes the proof of Theorem \ref{T:A}.

\newpage

\end{document}